\documentclass{mod}
\usepackage{url}
\usepackage{setspace}
\usepackage{scrextend}
\usepackage{cite}
\usepackage[shortlabels]{enumitem}
\usepackage{hyperref}
\usepackage{caption}
\usepackage{subcaption}
\usepackage{float}
\usepackage{tikz}
\usepackage{tikz-cd}
\usepackage{amsmath}
\usepackage{amsthm}
\usepackage{amssymb}

\numberwithin{equation}{section}
\renewcommand{\theequation}{\thesection.\arabic{equation}}

\DeclareMathAlphabet{\mathbbm}{U}{bbm}{m}{n}

\usepackage{mathtools}
\usepackage{mathrsfs}

\usepackage[all]{xy}
\usepackage[toc,page]{appendix}
\usepackage{titlesec}
\usepackage{upgreek}

\usepackage{tocloft}

\usepackage{tocbasic}
\DeclareTOCStyleEntry[
beforeskip=.1em plus 1pt,
entryformat=\normalfont ,
pagenumberformat=\bfseries
]{tocline}{section}
\usepackage{etoolbox}
\patchcmd{\thebibliography}{%
	\section*{\refname}\@mkboth{\MakeUppercase\refname}{\MakeUppercase\refname}}{%
	\section*{\refname}}{}{}

\usepackage{ stmaryrd }

\newcounter{mainthm}

\newcounter{mainconj}

\newtheorem{thm}{Theorem}[section]

\theoremstyle{plain}
\newtheorem{lem}[thm]{Lemma}
\newtheorem{prop}[thm]{Proposition}

\newtheorem{defn-thm}[thm]{Definition-Theorem}

\newtheorem{defn-lem}[thm]{Definition-Lemma}
\newtheorem{construction}[thm]{Construction}

\newtheorem{defn}[thm]{Definition}

\theoremstyle{definition}

\newtheoremstyle{rmk}
{5pt}
{5pt}
{}
{}
{\itshape}
{}
{.5em}
{}

\newtheorem{rmk}[thm]{Remark}
\newtheorem{ex}[thm]{Example}

\newtheoremstyle{note}
{8pt}
{5pt}
{\itshape}
{10pt}
{\bfseries}
{}
{.5em}
{}

\theoremstyle{note}

\setlist[description]{font=
	\normalfont
	\itshape
	\space}

\usepackage{titletoc}

\titlecontents{section}
[0.72em]
{\scriptsize\bfseries\vspace{3pt}}%
{\thecontentslabel.\enspace}
{}
{\titlerule*[0.38pc]{.}\contentspage}%

\titlecontents{subsection}
[0em]
{\scriptsize \vspace{0pt}}%
{\qquad \quad \thecontentslabel.\enspace}
{}
{\titlerule*[0.5pc]{.}\contentspage}%

\titlecontents{subsubsection}
[0em]
{\footnotesize \vspace{1pt}}%
{\qquad \qquad \thecontentslabel.\enspace}
{}
{\titlerule*[0.5pc]{.}\contentspage}%

\titleformat{\subsubsection}[runin]{
	\bfseries
	\itshape\normalsize}{(\thesubsubsection) \ }{0em}{}[\mbox{ . } ]
\titlespacing{\subsubsection}
{0pt}
{2.25ex plus 1ex minus .2ex}
{0pt}

\newcommand{\trop}{ {\mathfrak{trop}} }
\DeclareMathOperator{\val}{\mathsf{v}}

\def\bar{\overline}
\def\hat{\widehat}

\def\^{\wedge}

\def\C{\mathbb{C}}
\def\H{\mathcal{H}}
\def\P{\mathbb{P}}

\def\R{\mathbb{R}}
\def\Z{\mathbb{Z}}
\def\T{\mathbb{T}}
\def\X{\mathbb{X}}

\def\cH{\mathcal{H}}

\def\cH{\mathcal{H}}

\def\a{\alpha}

\def\b{\beta}
\def\g{\gamma}

\def\l{\lambda}

\def\L{\Lambda}

\def\r{\rho}

\def\w{\omega}

\begin{document}
	\setlength{\parindent}{15pt}	\setlength{\parskip}{0em}

	\title{Family Floer SYZ mirror algorithm for the Grassmannian $Gr(2,4)$}
	
	\renewcommand{\thanks}[1]{\footnote{#1}}	
	
	\author[Z. Yu and H. Yuan]{Zekai Yu \thanks{\tiny Tsinghua University (Qiuzhen College), Beijing 100084, China. \quad Email: \texttt{yuzk23@mails.tsinghua.edu.cn}} and Hang Yuan \thanks{\tiny  Beijing Institute of Mathematical Sciences and Applications, Beijing 101408, China. \quad Email: \texttt{yuanhang@bimsa.cn}}}

	\begin{abstract} {\sc Abstract:}
		We give an explicit non-archimedean SYZ construction for the Landau-Ginzburg mirror of 
		$Gr(2,4)$. This work is complementary to the approach of Hong-Kim-Lau \cite{hong2023immersed} 
		to SYZ mirror symmetry for Grassmannians, while we focus on a more concrete fibration-level realization of the SYZ picture.
		Starting from a Lagrangian fibration on the A-side, we explicitly construct a non-archimedean analytic mirror fibration inside 
		the Berkovich analytification of the Langlands dual Grassmannian on the B-side. We show that the two fibrations 
		have identical smooth and singular loci and induce the same integral affine structure on 
		the smooth locus. Moreover, the natural disk-counting Landau-Ginzburg superpotential 
		agrees with the Marsh-Rietsch superpotential.
		While the construction is guided by the family Floer viewpoint, the proof proceeds mainly through explicit geometric constructions and does not rely on Floer-theoretic arguments. Thus, the Langlands-dual mirror and its superpotential are realized explicitly within a single framework, providing concrete geometric evidence for the SYZ principle.
	\end{abstract}
	
	\maketitle
	\tableofcontents

	\section{Introduction}
	
	A remarkable feature of mirror symmetry for Grassmannians is that the mirror is naturally visible inside the Langlands dual Grassmannian.  More precisely, for the Grassmannian
	\[
	X= Gr(k,n)
	\]
	of $k$-dimensional subspaces in $\mathbb C^n$,
	Rietsch’s Lie-theoretic mirror construction \cite{rietsch2008mirror}, later written explicitly in Pl\"ucker coordinates by Marsh-Rietsch \cite{marsh2020b}, realizes the Landau-Ginzburg (LG) mirror as a pair
	$(Y, W)$
	where $Y$ is the complement of a distinguished anti-canonical divisor in the Langlands dual Grassmannian 
	$Gr(n-k,n)$, and $W$ is the mirror superpotential expressed in terms of Pl\"ucker coordinates. This construction refines the earlier physical
	mirrors for Grassmannians of Eguchi-Hori-Xiong \cite{EguchiHoriXiong1997} and Hori-Vafa \cite{hori2002mirror}.

	This Lie-theoretic mirror for Grassmannians is expected to be compatible with the Strominger-Yau-Zaslow picture \cite{SYZ}. In this perspective, the mirror should arise, at least heuristically, by dualizing a Lagrangian torus fibration on the Grassmannian. More precisely, if $X$ admits a Lagrangian torus fibration
	$\pi:X\to B$,
	then the mirror space $Y$ is expected to be constructed as the total space of the dual torus fibration
	$\pi^\vee:Y\to B$,
	as illustrated schematically by
	\[
	\xymatrix{
		X \ar[dr]_{\pi} & & Y \ar[dl]^{\pi^\vee} \\
		& B &
	}
	\]
	Moreover, the Landau-Ginzburg superpotential $W$ on the mirror $Y$ is expected to encode counts of Maslov index two holomorphic disks bounded by the Lagrangian torus fibers.

	For Grassmannians and flag varieties, Gelfand-Cetlin systems provide Lagrangian fibrations over polytopes, with singular fibers appearing over the boundary of the polytope. 
	Nishinou-Nohara-Ueda \cite{nishinou2010toric} computed the disk potential of a regular Gelfand-Cetlin fiber in partial flag manifolds, such as $Gr(2,n)$, and showed that it agrees with the Hori-Vafa's prediction.
	Moreover, the work of Hong, Kim, and Lau \cite{hong2023immersed} argues that for Grassmannians $Gr(2,n)$, studying the Floer theory of certain specific singular Lagrangian submanifolds may retrieve a large amount of information in the Langlands dual Grassmannian and the corresponding LG superpotential in the works of Rietsch.
	Their approach relies on replacing the singular fiber by immersed Lagrangians and then gluing the Maurer-Cartan deformation spaces of charts appropriately, while a corresponding mirror fibration is not constructed in their work.

	Now, it is natural to ask the following questions.
	
	\begin{enumerate}[(i)]
		\itemsep 3pt
		\item \textit{Can the Langlands dual Grassmannian be recovered from the SYZ picture, as a geometric space obtained by dualizing a Lagrangian torus fibration on the original Grassmannian?}
		\item \textit{Can the Marsh-Rietsch superpotential be recovered from the counts of Maslov index two holomorphic disks associated with corresponding Lagrangian torus fibers?}
		
		\item \textit{Finally, can these two problems be treated simultaneously so that both emerge from one coherent framework?}
	\end{enumerate}

	In this paper, we give further evidence for the SYZ philosophy of Grassmannians. Building on our previous works \cite{Yuan_I_FamilyFloer, Yuan_local_SYZ,Yuan_conifold,Yuan_A_n}, our SYZ mirror construction produces not only the desired Landau-Ginzburg model, but also a mirror fibration satisfying natural matching conditions on the base space $B$.

	Let $Gr(k,n)_{\Bbbk}$ denote the Grassmannian over a fixed ground field $\Bbbk$.
	Denote by $\Lambda=\mathbb C((T^{\mathbb R}))$ the \textit{Novikov field} consisting of formal power series $\sum_{i=0}^\infty a_i T^{\lambda_i}$ with $a_i\in\mathbb C$ and $\lambda_i \nearrow \infty$.
	This is a non-archimedean field, so one can consider the Berkovich analytification $Gr(k,n)_{\Lambda}^{\mathrm{an}}$ of the variety $Gr(k,n)_{\Lambda}$; see \cite{Berkovich_2012spectral,Berkovich1993etale}.
	Intuitively, just as the complex analytic topology on $Gr(k,n)_{\mathbb C}$ refines the Zariski topology, 
	one can view $Gr(k,n)_{\Lambda}^{\mathrm{an}}$ as a topological space that refines the Zariski topology on $Gr(k,n)_{\Lambda}$.
	Note that
	$Gr(k,n)_{\Bbbk}$ has dimension $N:=k(n-k)$ and the Pl\"ucker coordinates give an embedding into a projective space.
	For the sake of concreteness and explicitness, we focus on the special case $k=2$ and $n=4$, and leave the general case to future work. 
	There is a distinguished anti-canonical divisor $D_{ac}$ in $Gr(2,4)$ given by the vanishings of the four cyclic Pl\"ucker coordinates $Z_{12}, Z_{23}, Z_{34}, Z_{14}$.

	Our main result is as follows:

	\begin{thm}
		\label{main_thm}
		Define $X=Gr(2,4)_{\mathbb C}\setminus D_{ac}$ and $Y=Gr(2,4)_{\Lambda}^{\mathrm{an}} \setminus D_{ac}$.
		There exists an analytic open subset $\mathcal Y\subset Y$, a Lagrangian fibration $\pi:X \to B$ and an analytically continuous map $\pi^\vee: \mathcal Y\to B$ for some base manifold $B$ such that
		
		\begin{itemize}
			\itemsep 2pt
			\item The smooth and singular loci of $\pi$ and $\pi^\vee$ coincide identically.
			\item The two integral affine structures induced from $\pi$ and $\pi^\vee$ respectively coincide identically.
		\end{itemize}
		
		\noindent
		Both $\pi$ and $\pi^\vee$ can be written down explicitly. Moreover, there is a natural LG superpotential obtained from counting disks intersecting $D_{ac}$ (see \eqref{eq_tilde_W}), which coincides with the Marsh-Rietsch superpotential in \cite{marsh2020b}.
	\end{thm}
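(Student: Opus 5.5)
The plan is to make everything explicit by working in a cluster-type torus chart. On the A-side I will not use the Gelfand--Cetlin system directly. Instead I will build $\pi$ on $X=Gr(2,4)_{\mathbb C}\setminus D_{ac}$ from the Hamiltonian $T^2$-action given by the maximal torus of $GL_4$ modulo scalars, which preserves each cyclic Plücker coordinate. The fibration will have the form
\[
\pi=(\mu_1,\mu_2,\ \log|f|)
\]
or a Gross-type variant $|f-\epsilon|$. Here $\mu=(\mu_1,\mu_2)$ is the moment map, and $f$ is a $T$-invariant rational function built from Plücker coordinates. The natural candidate is the ratio $Z_{13}Z_{24}/(Z_{12}Z_{34})$, a cross-ratio type function. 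Via the Plücker relation $Z_{13}Z_{24}=Z_{12}Z_{34}+Z_{14}Z_{23}$, it is invariant under the torus and nonvanishing off $D_{ac}$ away from a codimension-two locus. The fourth coordinate is then a moment-map/absolute-value of a complementary invariant. Concretely, I will reduce to the symplectic reduction $X/\!/_{\mu}T^2$, a punctured surface, and take there the fibration $\log|Z_{14}Z_{23}/Z_{12}Z_{34}|$ together with the induced $S^1$-moment map for a further circle. This gives a Gross--Goldstein-type fibration whose singular fibers occur exactly where the invariant function has critical points, i.e. over a codimension-two discriminant locus $\Delta\subset B$. I will compute $B$ and $\Delta$ explicitly, and the integral affine structure from action coordinates: the $\mu_i$ are already affine, and the remaining action coordinate is the flux/area function, which is piecewise linear with monodromy around $\Delta$.

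On the B-side I will write $\pi^\vee$ using valuations. On the Plücker chart $\{Z_{12}\neq 0\}$ of $Gr(2,4)^{\mathrm{an}}_\Lambda$ with coordinates $(a,b,c,d)$, the cyclic Plücker coordinates become monomials and one binomial. I will set
\[
\pi^\vee=\bigl(\val(\text{two torus weights}),\ \val(\text{cross-ratio-type coordinate}),\ \min\text{-type combination}\bigr).
\]
The last entry is defined piecewise as in my earlier conifold and $A_n$ papers, so that it is the tropicalization of the Plücker relation $Z_{13}Z_{24}=Z_{12}Z_{34}+Z_{14}Z_{23}$. The analytic open subset $\mathcal Y$ is cut out by imposing the valuation ranges matching the image of $\pi$. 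The singular locus of $\pi^\vee$ is where the two terms of the tropical binomial have equal valuation and the fiber fails to be an affinoid torus. The tropical-geometry computation should place this at the same $\Delta$, once I match the symplectic-area parameter with the valuation parameter using Novikov exponents $T^{\text{area}}$.

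The key steps are as follows. First, compute $B$, $\Delta$ and the action coordinates of $\pi$. Second, compute the chart-wise affine coordinates of $\pi^\vee$ and verify that the transition maps between the two Plücker charts $Z_{12}\neq0$ and $Z_{34}\neq0$ are exactly the wall-crossing/monodromy transformations. Third, check identical smooth/singular loci and affine structures by comparing piecewise linear formulas. Fourth, compute the superpotential. Each cyclic divisor $\{Z_{i,i+1}=0\}$ contributes Maslov index two disks, counted via the torus action and the local model near each divisor, giving terms of the form $T^{\omega(\beta)}z^{\partial\beta}$. In the chamber where $Z_{13}$ is a coordinate I will match these to Marsh--Rietsch's
\[
W=\frac{Z_{13}}{Z_{12}}+\frac{Z_{24}}{Z_{23}}+q\frac{Z_{13}}{Z_{34}}+\frac{Z_{24}}{Z_{14}}\cdot(\text{rescaling}),
\]
and then check that wall-crossing glues these chamber expressions into the global Plücker formula \eqref{eq_tilde_W}.

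The main obstacle is the singular locus matching. I need the discriminant of the symplectic fibration to be a genuinely explicit piecewise-linear set, and an exact symplectic action coordinate need not be tropical on the nose. My first attempt is to choose the symplectic form and fibration so that the reduced space is a toric-like model with the Gross-type fibration, where the area function is exactly $\max$/$\min$ of linear functions. If that fails, I will instead modify $\pi^\vee$ by an analytic reparametrization of the base, which the statement allows since $\pi^\vee$ is only required to be continuous.
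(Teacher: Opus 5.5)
\textbf{The main gap.} The step you flag as the ``main obstacle'' (matching singular loci and affine structures) is the real content of the theorem, and neither of your strategies closes it.

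Your first attempt cannot work. For the Fubini--Study form the fourth action coordinate is a transcendental function of $\rho$: it is an elliptic integral, as in the computation of $A_\pm(R)$ in the proof of Lemma~\ref{decreasing_lem}. In the coordinates $(x,\psi)$ the discriminant sits over the graph of $\psi_0=\psi(\cdot,0)$ on $\Pi$, a function coming from symplectic areas rather than any tropical formula. Only the \emph{transition} between action charts is PL, so no ``toric-like model'' makes the area itself a max/min of linear functions. A purely tropical $\pi^\vee$ therefore cannot be matched to $\pi$ without feeding in symplectic data.

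Three ingredients are missing:
\begin{enumerate}[(i)]
\item An explicit atlas $\chi_\pm=(x_1,x_2,x_3,\psi_\pm)$, with $x_i=E(\gamma_i)=H_i$ for global disk classes $\gamma_i$, and $\psi_+=E(\alpha^+_{34})$, $\psi_-=E(\beta^-_{34,24})$. Its transition $\psi_+=\psi_-+\min\{0,x_1+x_3-1\}$ is forced by the topological relations $\alpha^+_{34}=\beta^-_{34,24}$ on $\mathcal N_{24}$ and $\beta^-_{34,24}=\mathcal H-\gamma_1-\gamma_3+\alpha^+_{34}$ on $\mathcal N_{13}$, which are detected by intersection numbers with the six $D_{ij}$, together with $E(\mathcal H)=1$.
\item Strict monotonicity of $\psi$ in $\rho$. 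This makes $\chi_\pm$ diffeomorphisms and any base reparametrization built from $\psi$ a homeomorphism.
\item A B-side map with $\psi$ built in, so that on the cluster charts it is $\chi_\pm^{-1}\circ\trop$. In the paper this is $\pi^\vee=j^{-1}\circ f$, with $F_1=\min\{\val(h_1),\psi_0\}$, $F_0=\min\{\val(h_0),\min\{0,\val(y_1)+\val(y_3)-1\}-\psi_0\}$ and $h_0h_1=1+T^{-1}y_1y_3$.
\end{enumerate}

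Item (iii) is also what places the singular locus correctly. Your criterion ``the two terms of the binomial have equal valuation'' only cuts out $\Pi\times\mathbb R$, which has codimension one. Over $\Pi\times\{\rho\neq0\}$ the fibers are genuine affinoid tori in the coordinates $(y_1,y_2,y_3,h_1)$ or $(y_1,y_2,y_3,h_0)$. The only non-torus fibers are $\{\val(h_1)\ge\psi_0,\ \val(h_0)\ge-\psi_0\}$ over $\Pi$. Where this corner sits is a free choice; it lands on $\rho=0$ only because $\psi_0$ is inserted.

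Your fallback of reparametrizing the base points in this direction. Without (i) and (ii), however, nothing guarantees that the reparametrization exists or intertwines the two affine structures. The paper avoids an a posteriori comparison: it builds the mirror as $\trop^{-1}(V_\pm)$ over $\chi_\pm$, glues by the potential-preserving map, and only then embeds into $Y$. The agreement then holds by construction.

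\textbf{Errors in the setup.}
\begin{itemize}
\item The maximal torus of $GL_4$ modulo scalars, acting by $Z_{ij}\mapsto t_it_jZ_{ij}$, is $\mathbb T^3$, not $\mathbb T^2$. Only the $\mathbb T^3$-reduction of the real $8$-dimensional $X$ is a surface, and $(\mu_1,\mu_2,\log|f|)$ is one coordinate short.
\item $\log|f|$ with $f=Z_{13}Z_{24}/(Z_{12}Z_{34})$ is undefined along $(D_{13}\cup D_{24})\cap X$, which are divisors, not a codimension-two locus. The correct last coordinate is $\log|f-1|=\log|Z_{14}Z_{23}/(Z_{12}Z_{34})|$, which you do name later. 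With these fixes you recover the paper's $\pi$, whose discriminant $\Gamma=\Pi\times\{0\}$ is the image of $D_{13}\cap D_{24}$.
\item On the B-side your wall-crossing check uses the wrong charts. $Z_{12}$ and $Z_{34}$ are cyclic, so $\{Z_{12}\neq0\}$ and $\{Z_{34}\neq0\}$ both contain all of $Y$, and their transition is a unit rescaling with no wall-crossing. The relevant charts are the cluster tori $\{p_{13}\neq0\}\cap Y$ and $\{p_{24}\neq0\}\cap Y$, each isomorphic to $(\Lambda^*)^4$. They are related by the exchange relation $p_{13}p_{24}=p_{12}p_{34}+p_{14}p_{23}$, which is the gluing $y^-=y^+/(1+T^{-1}y_1y_3)$ the paper obtains by preserving the $y_i$ and the potential.
\item A one-disk-per-divisor local count will not reproduce Marsh--Rietsch. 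For $\rho>0$, each of $D_{23}$ and $D_{14}$ carries two Maslov-two classes, one passing through $D_{13}$ and one through $D_{24}$. That is why $W_+$ has six terms and $p_{13}/p_{12}$ pulls back to the binomial $y_3/y^++T/(y_1y^+)$.
\end{itemize}
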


	Despite the term ``Family Floer'' in the title, the statement of Theorem \ref{main_thm} is completely free of Floer theory, and most of the work consists of explicit constructions.
	We wish our main theorem to be accessible not only to symplectic geometers but also to a broader audience. Some steps in our mirror construction may appear unmotivated or look ad hoc, but the motivations firmly stem from a systematic Floer-theoretic framework developed in \cite{Yuan_I_FamilyFloer}.
	To sum up, regardless of the motivations, the proof of Theorem~\ref{main_thm} is carried out largely through explicit and direct constructions and is free of Floer-theoretic arguments.
	
	A brief outline of our mirror construction approach is as follows.
	By the Arnold-Liouville theorem, the smooth locus of the Lagrangian fibration, $\pi_0 : X_0 \to B_0$, can be locally identified with the standard complex logarithm map $\mathrm{Log} : (\mathbb C^*)^n \to \mathbb R^n$ via action-angle coordinates. More precisely, let $\{\chi_i : U_i \to V_i\}$ be an atlas for the integral affine structure on $B_0$. Then, each local fibration $\pi_0^{-1}(U_i)$ is symplectically identified with $\mathrm{Log}^{-1}(V_i)$, and globally
	\[
	X_0 \cong \bigcup_i \mathrm{Log}^{-1}(V_i) \big/ \sim_{SG}
	\]
	where $\sim_{SG}$ is a gluing in symplectic geometry compatible with the local fibration maps.
	The family Floer mirror construction asserts that the dual local fibrations are first given by \(\trop^{-1}(V_i) \to V_i\), and then that the Fukaya $A_\infty$ algebras associated to those smooth fibers determine a natural gluing
	\[
	X_0^\vee := \bigcup_i \trop^{-1}(V_i) \big/ \sim_{NA},
	\]
	where $\sim_{NA}$ is a gluing in the category of Berkovich analytic spaces. By definition, these $A_\infty$ algebras depend not only on the smooth locus $X_0$ but on the ambient symplectic manifold $X$; in particular, to some extent the gluing $\sim_{NA}$ encodes information about the singular Lagrangian fibers. Moreover, we proved in \cite{Yuan_I_FamilyFloer} that the analytic space $X_0^\vee$ is unique up to isomorphism.
	
	Nevertheless, despite this general theory for the gluing $\sim_{NA}$, in the presence of sufficient symmetries it may happen that the atlas has finitely many charts and that the gluing $\sim_{NA}$ can be described explicitly. This is precisely the situation of Theorem \ref{main_thm} where no prior knowledge of Floer theory and Fukaya's $A_\infty$ algebras is required.

	
	A short proof roadmap is as follows:
	
	\begin{enumerate}
		\itemsep 4pt
		\item Construct the A-side Lagrangian fibration and identify its singular locus (\S \ref{s_lag_fib}). 
		\item Compute disk-class local systems and monodromy (\S \ref{s_local_system_topo}). 
		\item Use disk areas to construct the integral affine atlas (\S \ref{s_int_aff_at}). 
		\item Build the mirror analytic space by two affinoid charts and the wall-crossing gluing (\S \ref{s_family_floer_mirror}). 
		\item Embed this mirror into the known Grassmannian mirror and construct the B-side fibration (\S \ref{s_explicit_mirror_fibration}). 
		\item Compare smooth loci and affine structures (\S \ref{s_explicit_mirror_fibration}). 
	\end{enumerate}
	
	\section{A-side} \label{Aside}
	
	Let $Gr(2,4)$ be the complex Grassmannian of 2-dimensional subspaces in $\mathbb C^4$. It is of complex dimension 4 and embeds into $\mathbb {CP}^5$ with Pl\"ucker coordinates $Z_{ij}$ ($1\leqslant i<j \leqslant 4$).
	To emphasize the frozen variables, we further set
	$
	g_i=g_{i,i+1}=Z_{i,i+1}
	$ for the frozen variables, while both notations may be used according to the context. Note that
	\[
	Z_{13}Z_{24}=g_{12}g_{34}+g_{23}g_{14}
	\]
	
	Set $D_{ij}=\{Z_{ij}=0\}$, and the distinguished anti-canonical divisor is
	\[
	D_{ac}=D_{12}\cup D_{23}\cup D_{34} \cup D_{14}
	\]
	We set $\mathbb X=Gr(2,4)$ and define 
	\[ X=\mathbb X \setminus D_{ac}
	\]
	to be the complement of the divisor $D_{ac}$.
	Through the {Pl\"ucker embedding}, the Fubini-Study form on $\mathbb P^5$ induces a symplectic form $\omega$ on $\mathbb X$.
	We identify $S^1$ with $\mathbb R/\mathbb Z$.
	For each $1\leqslant i\leqslant 4$, we consider the natural $S^1$-action $\theta_i:S^1\times \mathbb X\to \mathbb X$ on $\mathbb X$ defined by
	\begin{align}
		\label{Ham_S_1_theta_1234}
		\big( e^{2\pi\mathbf it} \ , \ Z_{jk} \big) &\mapsto 
		\begin{cases}
			e^{2\pi \mathbf i t}Z_{jk}& \text{, if } j=i\text{ or } k=i \\
			Z_{jk}& \text{, else}
		\end{cases}
	\end{align}
	Notice that $\theta_1,\theta_2,\theta_3,\theta_4$ are not independent since an overall phase rotation by $2\pi$ is trivial on the projective space. Using the standard Fubini-Study symplectic form 
	in an affine chart, one can compute that the moment map $H_i:\mathbb X\to [0,1]$ associated to the $S^1$-action $\theta_i$ is the following
	\[
	H_i= \frac{\sum_{k\neq i}|Z_{ik}|^2}{\sum_{j<k} |Z_{jk}|^2} 
	\]
	Here we use the convention that $dH=-\iota_X \omega$.

	Specifically, further clarifying the frozen variables $g_{i,i+1}$, we have
	\begin{align*}
		H_1&=(|g_{12}|^2+|Z_{13}|^2+|g_{14}|^2)/\textstyle\sum_{j<k}|Z_{jk}|^2 \\
		H_{2}&=(|g_{12}|^2+|g_{23}|^2+|Z_{24}|^2)/\textstyle\sum_{j<k}|Z_{jk}|^2 \\
		H_{3}&=(|Z_{13}|^2+|g_{23}|^2+|g_{34}|^2)/\textstyle\sum_{j<k}|Z_{jk}|^2 \\
		H_{4}&=(|g_{14}|^2+|Z_{24}|^2+|g_{34}|^2)/\textstyle\sum_{j<k}|Z_{jk}|^2
	\end{align*}
	Since $H_1+H_2+H_3+H_4=2$, we obtain a well-defined map
	\[
	\mu =(H_1,H_2,H_3,H_4) : \mathbb X \to \Delta_{2,4}
	\]
	where $\Delta_{2,4}$ denotes the $(2,4)$-hypersimplex. Recall that the $(k,n)$-hypersimplex is defined by
	\[
	\Delta_{k,n}=\{  x=(x_1,\dots, x_n)\in [0,1]^n \mid x_1+\cdots+x_n=k\}
	\]
	It is known that $\Delta_{2,4}$ is an octahedron as in Figure \ref{fig:wall}.
	
	\begin{figure}
		\centering
		\includegraphics[width=0.5\linewidth]{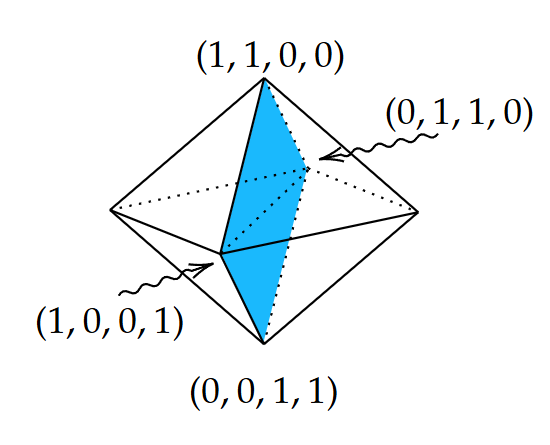}
		\caption{
			\footnotesize The octahedron. The blue region depicts a hyperplane section, which will be the "wall". Its four vertices are presented with coordinates $(x_1,x_2,x_3,x_4)$ of $\Delta_{2,4}$.}
		\label{fig:wall}
	\end{figure}
	
	There are exactly 6 vertices given by setting two of $x_i$'s to be 0 and the other two of $x_i$'s to be 1.
	Each facet of $\Delta_{2,4}$ is given by the convex hull of three distinct vertices.
	Each of these functions ranges over $[0,1]$. It takes value 1 if only two $g$'s and one $Z$ is nonvanishing, while it takes value 0 similarly. In particular it cannot take boundary values on $X=\mathbb X\setminus D_{ac}$. 
	Thus, 
	\[
	\mu : X\to \Delta^{\circ}_{2,4}
	\]
	where $\Delta_{2,4}^\circ$ is the interior of $\Delta_{2,4}$.
	

	There are two cluster charts
	\begin{align*}
		U_{13} &= \{z \in X \mid  Z_{13}(z)\neq 0\} = X \setminus D_{13}\cong (\mathbb C^*)^4   \\  
		U_{24}&=\{z\in X \mid Z_{24}(z)\neq 0\} = X\setminus D_{24}\cong (\mathbb C^*)^4
	\end{align*}
	
	The complement of the union $U_{13}\cup U_{24}$ is a codimension-2 subvariety $D_{13}\cap D_{24}=\{Z_{13}=Z_{24}=0\}$ whose image under $  \mu$ is given by the subset 
	\begin{equation} 
		\label{eq_Pi}
		\Pi:= \{ x_1+x_3=x_2+x_4=1\}
	\end{equation}
	within $\Delta_{2,4}$. Similarly, the image of $D_{12}\cap D_{34}$ is $\{x_1+x_2=x_3+x_4=1\}$, and the image of $D_{23}\cap D_{14}$ is $\{x_1+x_4=x_2+x_3=1\}$.
	
	\subsection{Lagrangian fibration}
	\label{s_lag_fib}
	Define 
	\[B= \Delta_{2,4}^\circ \times \mathbb R_\rho
	\]
	Consider the map
	\[
	\pi= \left( \mu, \log\left| \frac{g_{23}g_{14}}{g_{12}g_{34}}\right| \right): X=\mathbb X\setminus D_{ac}\to B 
	\]
	Define
	\begin{equation} 
		\label{eq_Gamma_singular_locus}
		\Gamma =\Pi\times\{0\} := \{ (x,\r) \in B \mid \r=0, \ x_1+x_3=x_2+x_4=1 \}
	\end{equation}
	and
	define
	\[B_0:=B\setminus \Gamma
	\]
	Denote by $L_q$ or $L_{x, \rho}$ the $\pi$-fiber at $q=(x, \rho)\in B$.

	\begin{prop}
		\label{singular_locus_prop}
		The above map $\pi$ is a Lagrangian fibration, and its singular locus is $\Gamma$.
	\end{prop}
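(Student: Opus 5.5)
The plan is to combine the Hamiltonian torus action with the single extra function $\rho=\log|u|$, where
\[
u=\frac{g_{23}g_{14}}{g_{12}g_{34}}.
\]
On $X$ all four frozen variables are nonzero, so $u$ is a holomorphic function $X\to\mathbb C^*$. Moreover $Z_{13}Z_{24}=g_{12}g_{34}(1+u)$, so $D_{13}\cap D_{24}\cap X\subset\{u=-1\}$. Let $T$ be the effective torus generated by $\theta_1,\dots,\theta_4$ modulo the diagonal circle; it is a $3$-torus and $\mu$ is its moment map.

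\textbf{Step 1 (involutivity).} The $H_i$ are components of a moment map for the abelian group $T$, so $\{H_i,H_j\}=0$. Each $\theta_i$ multiplies exactly one factor of the numerator and exactly one factor of the denominator of $u$ by the same phase; for example $\theta_1$ rotates $g_{14}$ and $g_{12}$. Hence $|u|$, and so $\rho$, is $T$-invariant. Therefore $\{H_i,\rho\}=-d\rho(X_{H_i})=0$. Since $\dim_{\mathbb R}X=8$ and $\dim B=4$, the map $\pi$ is a Lagrangian fibration on the open set where $d\pi$ has rank $4$. What remains is to show that this set is exactly $\pi^{-1}(B_0)$.

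\textbf{Step 2 (stabilizers and rank of $d\mu$).} I would compute the stabilizer in the complexified torus $T_{\mathbb C}=(\mathbb C^*)^4/\mathbb C^*$ of a point of $X$. Fixing all four $g_{i,i+1}\neq0$ forces $t_1t_2=t_2t_3=t_3t_4=t_4t_1=c$, which gives $t_1=t_3$ and $t_2=t_4$.
\begin{itemize}
\item If $Z_{13}\neq0$ or $Z_{24}\neq0$, the extra condition ($t_1t_3=c$ or $t_2t_4=c$) cuts this down to the diagonal, so the $T$-action is free.
\item On $D_{13}\cap D_{24}\cap X$ a circle survives, so $d\mu$ has rank $2$ there.
\end{itemize}
Thus $dH_1,\dots,dH_4$ span a $3$-dimensional space exactly off $D_{13}\cap D_{24}$. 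On that locus $\mu\in\Pi$ and $u=-1$, so $\rho=0$; conversely $\pi(D_{13}\cap D_{24}\cap X)\subseteq\Gamma$. I would also check that the image is all of $\Gamma$, for instance by exhibiting the explicit points $Z_{13}=Z_{24}=0$ with prescribed $|g|$'s realizing every $x\in\Pi$ with $x_i\in(0,1)$. Since $d\pi$ drops rank at these points, every point of $\Gamma$ is a singular value.

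\textbf{Step 3 (rank of $d\rho$ transverse to the orbits).} At a point $p$ where $T$ acts freely, $d\pi_p$ has full rank iff $d\rho_p$ is nonzero on $\ker d\mu_p\cap(T\cdot p)^{\omega}$. Equivalently, the descended function $\bar\rho$ on the reduced surface $M_x=\mu^{-1}(x)/T$ must be noncritical at $[p]$. By Kempf--Ness, the free part of $M_x$ is identified with an open subset of the free quotient $X^{\rm free}/T_{\mathbb C}$.

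To control this quotient I use the cluster charts. On $U_{13}\cong(\mathbb C^*)^4$ we may take coordinates $(g_{12},g_{23},g_{34},g_{14})$ up to scale, with $Z_{13}$ fixed to $1$ and $Z_{24}$ determined by the Pl\"ucker relation. In these coordinates $T_{\mathbb C}$ acts by a free torus action, and $u$ is a coordinate on the quotient, so $U_{13}/T_{\mathbb C}\cong\mathbb C^*_u$; the same holds for $U_{24}$. Hence on $X^{\rm free}/T_{\mathbb C}$ the function $u$ is a local holomorphic coordinate, and $\bar\rho=\log|u|$ has nowhere vanishing differential, being the real part of the holomorphic function $\log u$ with $u'\neq0$. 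So $d\pi$ has rank $4$ at all points with free orbits, and every point of $B_0$ is a regular value. Together with Step 2 this identifies the singular locus as $\Gamma$.

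\textbf{Main obstacle.} The delicate point is the justification in Step 3 that $u$ is a genuine coordinate on the reduced space. This includes the free points with $u=-1$, where exactly one of $Z_{13},Z_{24}$ vanishes. I would handle it chart by chart as above: $U_{13}\cup U_{24}$ is exactly the free locus, and each chart is a principal $T_{\mathbb C}$-bundle over $\mathbb C^*_u$ (in $U_{13}$ the point $u=-1$ simply corresponds to $Z_{24}=0$). A direct alternative, if Kempf--Ness is to be avoided, is to show that $d\rho$ is not in the span of the $dH_i$ by evaluating both on the vector field $J\xi$, where $\xi$ is the holomorphic field generating $u\mapsto e^{s}u$ along a slice. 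This pairing is positive for $d\rho$ and can be arranged to vanish for the $dH_i$ after correcting by an element of $\mathfrak t_{\mathbb C}$.
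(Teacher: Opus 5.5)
Your proposal is correct and is essentially the paper's argument. The paper's auxiliary map $f=Z_{13}Z_{24}/(g_{12}g_{34})$ is exactly $1+u$, its symplectic-reduction argument for isotropy encodes the same fact as your $\{H_i,\rho\}=0$, and your stabilizer computation together with the cluster-chart observation that $u$ is a submersion on $U_{13}\cup U_{24}$ make explicit the paper's assertion that all degeneracy lies on $\{Z_{13}=Z_{24}=0\}$, whose image is $\Gamma$.

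Two minor remarks:

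\begin{itemize}
\item In your optional alternative, if $\xi$ generates the radial flow $u\mapsto e^{s}u$, then $d\rho(J\xi)=0$. You should instead evaluate on $\xi$ itself, corrected by an element of $J\mathfrak{t}\cdot p$ to kill $d\mu$.
\item Like the paper, you leave implicit that the regular fibers are compact and connected, hence $\mathbb{T}^4$, which is used later. In your framework this follows from properness of $\pi$ together with injectivity of $u$ on the free part of $M_x$ (the uniqueness half of Kempf--Ness). These make each regular fiber a $\mathbb{T}^3$-bundle over the full circle $|u|=e^\rho$.
\end{itemize}
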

	\begin{proof}
		Consider first the auxiliary conic-like fibration $$f: X\rightarrow \C,  \qquad p:=[Z_{ij}]\mapsto \frac{Z_{13}Z_{24}}{g_{12} g_{34}}=:f(p)$$
		Now $e^\rho$ can be interpreted as the distance from $f(p)$ to 1. Away from the anti-canonical divisor, $e^\rho$ is never zero or infinity, so $f$ actually takes values in $\C\backslash\{1\}$. The fibers of $f$ are generically smooth complex varieties of dimension three.
		Moreover, the critical points lie in $\{Z_{13}=Z_{24}=0\}$, and the only singular fiber is $f^{-1}(0)$. 
		
		The level set $\mu^{-1}(x)$ surjects onto $\C$ through $f$. The Hamiltonian actions associated to $H_i$ preserve the $f$-fiber. One may notice that the $\pi$-fiber $L:=L_{  x, \rho}$ is precisely the subset of the level set $\mu^{-1}(x)$ whose $f$-image is the concentric circles centered at 1 with radius $e^\r$. Assume $L$ does not meet $\{Z_{13}=Z_{24}=0\}$, it is clear that $L$ is $\T^4$.
		We aim to show that it is a Lagrangian submanifold. Fix $p\in L$.
		Each Hamiltonian action $\theta_i$ above preserves the $f$-fiber and hence sweeps out a 3-torus $\T^3$ within a $\pi$-fiber. The Hamiltonian vectors give a 3-dimensional subspace $S$ inside $T_p L$.
		Let $V_c\in T_p L$ be a vector complementary to $S$.
		It suffices to show for any $V\in S$, we have $\omega(V_c,V)=0$.
		Indeed, let's consider the symplectic reduction.
		Denote by $q: \mu^{-1}(x)\to \mu^{-1}(x)/\T^3$ the quotient map. 
		Denote by $\omega_{red}$ the reduced symplectic form. Then, as $dq(S)=0$, we conclude
		\[
		\omega(V_c,V) = \omega_{red}(dq(V_c), dq(V))=0
		\]
		When $L$ does meet $\{Z_{13}=Z_{24}=0\}$, or equivalently when $(x,\r)\in \Gamma$, the same argument carries over to show that it is a (singular) Lagrangian submanifold. 
	\end{proof} 
	

	Define
	\begin{align*}
		&\Theta_{13}:=\bar\Theta_{13}\setminus \Gamma =\{ (x,0) \mid x_1+x_3 < x_2+x_4 \} \\
		&\Theta_{24}:=\bar\Theta_{24}\setminus \Gamma = \{ (x,0) \mid x_1+x_3 > x_2+x_4 \}
	\end{align*}
	Then, observe that the $\pi$-images of $D_{13}$ and $D_{24}$ are their closures
	$\bar\Theta_{13}:=\pi(D_{13})=\{ (x,0) \mid x_1+x_3\leqslant x_2+x_4 \}$ and $\bar\Theta_{24}:= \pi(D_{24})= \{ (x,0) \mid x_1+x_3\geqslant x_2+x_4 \}$.
	The following statement says that $\Theta_{13}$ and $\Theta_{24}$ are the walls of Maslov-0 holomorphic disks.

	\begin{prop}
		A smooth Lagrangian fiber $L_{x,\rho}$ for $(x,\rho)\in B_0$ bounds a nontrivial Maslov index zero holomorphic disk if and only if $(x,\rho)\in \Theta_{13}\cup \Theta_{24}$.
	\end{prop}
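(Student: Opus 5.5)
We will show that a nonconstant Maslov zero holomorphic disk bounded by a smooth fiber $L=L_{x,\rho}$ must be contained in $D_{13}$ or $D_{24}$, and then check that such disks exist exactly over the walls.

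\textbf{Step 1: Maslov index via $D_{ac}$.} The divisor $D_{ac}$ is anticanonical, and the holomorphic volume form
\[
\Omega=\frac{dZ\text{-form}}{g_{12}g_{23}g_{34}g_{14}}
\]
has simple poles along $D_{ac}$. Its restriction to each smooth fiber $L\subset X$ is a special Lagrangian in the appropriate sense: $L$ is swept out by the torus action and a circle in the $f$-plane. By the standard Auroux formula,
\[
\mu(u)=2\,u\cdot D_{ac}
\]
for any holomorphic disk $u:(D,\partial D)\to(\mathbb X,L)$. If one prefers to avoid special Lagrangian language, the same formula follows by comparing with the toric-type boundary behaviour on each cluster chart $U_{13},U_{24}\cong(\mathbb C^*)^4$, where $\Omega$ is the standard log form. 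Since each $g_i$ is holomorphic, intersections with $D_{ac}$ are positive. Hence a Maslov zero disk $u$ satisfies $u\cdot D_{ac}=0$, so $u$ maps into $X$.

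\textbf{Step 2: Maslov zero disks lie in a fiber of $f$.} On $X$ the function $f=Z_{13}Z_{24}/(g_{12}g_{34})$ is holomorphic. The composition $f\circ u$ is therefore a holomorphic function on the disk with boundary on the circle $|w-1|=e^\rho$, and it never takes the value $1$, since $g_{23}g_{14}\neq0$ on $X$.

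By the argument principle, the winding number of $f\circ u|_{\partial D}$ about $1$ equals the number of zeros of $f\circ u-1$ in the disk. This number is zero, so $f\circ u$ maps into the closed disk $\{|w-1|\le e^\rho\}$. But $f\circ u-1$ has no zeros and its modulus is constant on the boundary. By the maximum and minimum principles applied to $f\circ u-1$ and $1/(f\circ u-1)$, the function $f\circ u$ is constant, equal to some $c$ with $|c-1|=e^\rho$.

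Next, the functions $Z_{13}/g_{12}$-type cluster coordinates restricted to the torus orbit directions behave like toric coordinates. Inside the reduced space $f^{-1}(c)$, the disk has boundary on a $\mathbb T^3$-orbit of the Hamiltonian torus action and has zero intersection with $D_{ac}$.

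Now take $c\neq0$. Then $f^{-1}(c)\cap X$ lies in $U_{13}\cap U_{24}\cong(\mathbb C^*)^3$, on which the three monomials, for instance $Z_{13}/g_{ij}$ and $Z_{24}/g_{kl}$ suitably normalized, are invertible holomorphic functions. Each such monomial has constant modulus on $L$ because it is determined by $\mu$ and $\rho$. The same maximum/minimum principle argument then forces every coordinate to be constant, so $u$ is constant.

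The remaining case is $c=0$, which means $\rho=0$. Here the fiber $f^{-1}(0)=(D_{13}\cup D_{24})\cap X$ is reducible. A nonconstant disk must lie in $D_{13}$ or $D_{24}$, and $L$ meets $D_{13}$ only when $x\in\bar\Theta_{13}$, that is, when $\pi(D_{13})\ni(x,0)$. Thus the wall condition is necessary.

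\textbf{Step 3: existence on the walls.} For $(x,0)\in\Theta_{13}$, the intersection $L\cap D_{13}$ is nonempty. Inside $D_{13}\cap X$, the coordinate $Z_{24}$ is free, and the relation reads $g_{12}g_{34}=-g_{23}g_{14}$.

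Fix the $g$'s up to the torus action and vary $Z_{24}$ in a disk $\{|Z_{24}|\le r\}$. The radius $r$ is chosen so that the moment map constraint holds on the boundary; the condition $x_1+x_3<x_2+x_4$ is exactly what makes this radius positive. At the center $Z_{24}=0$, so this is a holomorphic disk in $D_{13}$ passing through $D_{13}\cap D_{24}$.

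These disks have boundary on $L$, are disjoint from $D_{ac}$, and so have Maslov index $0$ by Step 1. The case $\Theta_{24}$ is symmetric.

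\textbf{Main obstacle.} The main obstacle is making Step 1 rigorous, namely justifying the formula $\mu=2\,u\cdot D_{ac}$ for these fibers. I would first try to verify it directly by showing that $\Omega|_L$ has constant phase, using $\Omega=d\log$ of the cluster coordinates together with $\rho$-invariance, and only then fall back on the chart comparison.
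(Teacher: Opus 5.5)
Your argument is correct and follows the paper's proof. Auroux's formula $\mu=2\,u\cdot D_{ac}$ pushes a Maslov-zero disk into $X$, and the maximum principle makes $f\circ u$ constant. Fibers $f^{-1}(c)$ with $c\neq 0$ then carry no nonconstant disks; you show this by the maximum/minimum principle on the ratios $Z_{ij}/g_{12}$, where the paper instead notes that $\pi_2((\mathbb C^*)^3,\mathbb T^3)=0$ forces zero area. Hence $c=0$ and $\rho=0$, and on the wall your disk $z\mapsto[0:zZ_{24}:g_{12}:g_{23}:g_{34}:g_{14}]$ is the paper's explicit disk up to the symmetry $Z_{13}\leftrightarrow Z_{24}$.

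One slip to fix: those monomials do not have constant modulus on all of $L$. For example, $|Z_{13}/g_{12}|=|R^2+z|^{1/2}$ varies along $\partial v^+_{34}\subset L_{q_+}$. They are constant only on $L\cap f^{-1}(c)$, which is a single $\mathbb T^3$-orbit. That, rather than ``determined by $\mu$ and $\rho$,'' is the correct justification, and it is all the step needs.
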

	
	\begin{proof}
		In this set-up, there is a relation \cite[Lemma 3.1]{AuTDual} that connects the Maslov index of a holomorphic disk representing a class $\b\in\pi_2(X,L)$ and its intersection with the anti-canonical divisor, i.e.  $\mu(\beta)=2\b\cdot D_{ac}$. Being of Maslov index zero means in particular that the disk does not intersect the anti-canonical divisor. Maximum principle then implies that the $f$-image of the disk is constant. However the smooth $f$-fibers are all diffeomorphic to $(\C^\ast)^3$ and do not contain a disk for purely topological reasons. Hence a holomorphic disk of Maslov index zero, if exists, must be bounded by $f^{-1}(0)$. The action coordinate of a smooth Lagrangian fiber which intersects non-trivially with $f^{-1}(0)$ is always in $\Theta_{13}\cup \Theta_{24}$. This proves the "only if" part.
		
		To prove the "if" part, notice that a smooth Lagrangian fiber over $\Theta_{13}\cup \Theta_{24}$ must intersect non-trivially with $f^{-1}(0)$. Hence it always bounds a Maslov zero disk: A Lagrangian fiber over $(x,\r)\in \Theta_{13}$ contains a point that looks like
		\[[\tilde Z_{13}:0:\tilde g_{12}:\tilde g_{23}:\tilde g_{34}:\tilde g_{41}] \]with all coordinates with a tilde nonvanishing. This point also belongs to $f^{-1}(0)$. Then the following holomorphic disk is bounded by exactly the same Lagrangian fiber and is of Maslov index zero
		\[\phi:\mathbb D\rightarrow Gr(2,4),z\mapsto [zZ_{13}:0:g_{12}:g_{23}:g_{34}:g_{41}]\]
		The same proof works for $(x,\r)\in \Theta_{24}$. 
	\end{proof}

	\subsection{Local systems of topological disks}
	\label{s_local_system_topo}
	Recall that $\mathbb X=Gr(2,4)$. We are interested in the following two local systems
	\begin{align*}
		\mathscr R := \bigcup_{q\in B_0} \pi_2(\mathbb X, L_q)   \ , \qquad \ \mathscr S := \bigcup_{q\in B_0} \pi_1(L_q) 
	\end{align*}
	
	Denote the class of the complex line in $\mathbb X$ by $\mathcal H$. Recall that $\X=Gr(2,4)_\C$. 
	Since the exact sequence 
	\[
	\xymatrix{
		0=\pi_2(L_q)  \ar[r] &  \pi_2(\mathbb X) \ar[r] & \pi_2(\mathbb X, L_q) \ar[r]^{\partial\quad} & \pi_1(L_q)\cong \mathbb Z^4 \ar[r] &  0=\pi_1(X)
	}
	\]
	splits and (the image of) $\pi_2(\mathbb X)$ is generated by $\cal H$, we see that $\pi_2(\mathbb X,L_q)$ is in fact a free abelian group generated by four disks and $\cal H$.
	On the other hand, as $q$ travels along a loop around the singular locus, it may feature a non-identity automorphism $\mathscr R|_q\to \mathscr R|_q$.
	Moreover, as the restriction maps for the local systems $\mathscr R$ and $\mathscr S$ commute under the natural boundary maps, one concludes that the monodromy for $\mathscr S$ is identical to the one for $\mathscr R$.
	However, the sections of $\mathscr R$ are more convenient to specify.

	To describe such monodromy, we proceed the following three steps:
	\begin{itemize}
		\itemsep 2pt
		\item Cover $B_0$ by contractible open subsets.
		\item Specify generating sections over these open subsets. 
		\item Study how these sections are related over the overlaps.
	\end{itemize}
	\vspace{0.5em}

	First, we put $U_+^{\prime}=\{\rho>0\}\subseteq B_0$ and $U_-^{\prime}=\{\rho<0\}\subseteq B_0$.
	Let $\mathcal N_{13}$ and $\mathcal N_{24}$ be sufficiently small neighborhoods of $\Theta_{13}$ and $\Theta_{24}$ inside $B_0$ respectively.
	Then, we define 
	\begin{align*}
		U_+&=U_+^{\prime}\cup \mathcal N_{13}\cup \mathcal N_{24}\\
		U_-&=U_-^{\prime}\cup \mathcal N_{13}\cup \mathcal N_{24}
	\end{align*}
	i.e. slight thickenings of $U'_+$ and $U'_-$ respectively.
	Note that $\{ U_+, U_-\}$ gives a covering of $B_0$. Since $U_\pm$ are contractible, the local systems can be trivialized over $U_\pm$.
	On the other hand, we define
	\begin{align*}
		U_{13}=B_0\setminus \mathcal N_{13} \\
		U_{24}=B_0\setminus \mathcal N_{24}
	\end{align*}
	and $\{U_{13}, U_{24}\}$ also forms an open covering of $B_0$.

	To be specific, let's pick $x_0=(\frac{1}{2},\frac{1}{2},\frac{1}{2},\frac{1}{2})\in \Delta_{2,4}^\circ$, and pick two points $q_+=(x_0,  2\log R) \in U_+$, $q_-=(x_0,-2\log R)\in U_-$.
	Topologically,
	the sections in 
	\[
	\mathscr R(U_\pm')\cong  \pi_2(\mathbb X,L_{q_\pm})
	\]
	can be specified by the intersections with the six coordinate hyperplanes. In fact, the intersection numbers define a linear homomorphism
	\begin{align*}
		\mathcal I_\pm: 
		\mathscr R(U_\pm') \to \mathbb Z^6 \ , \qquad \beta\mapsto (\beta\cdot D_{13}, \beta\cdot D_{24}, \beta\cdot D_{12},\beta\cdot D_{23},\beta\cdot D_{34},\beta\cdot D_{14})
	\end{align*}
	The line class $\mathcal H$ can be viewed as a global section of $\mathscr R$ over $B_0$ such that each intersection number is $1$. Namely, $\mathcal I_\pm(\mathcal H)=(1,1,1,1,1,1)$.
	By forgetting $\beta\cdot D_{24}$ and $\beta\cdot D_{13}$ respectively, we also need the maps
	\begin{align*}
		\mathcal I_{13}: 
		\mathscr R(U_{13}) \to \mathbb Z^5 \ , \qquad \beta\mapsto (\beta\cdot D_{13}, \beta\cdot D_{12},\beta\cdot D_{23},\beta\cdot D_{34},\beta\cdot D_{14})  \\
		\mathcal I_{24}: 
		\mathscr R(U_{24}) \to \mathbb Z^5 \ , \qquad \beta\mapsto (\beta\cdot D_{24}, \beta\cdot D_{12},\beta\cdot D_{23},\beta\cdot D_{34},\beta\cdot D_{14})
	\end{align*}
	
	\subsubsection{Explicit disks}
	\label{s_explicit_disks}
	The following constructions will be useful.
	Let $z$ represent a complex variable in the unit disk $\mathbb D\subseteq \mathbb C$.
	Define the six holomorphic disks $v_{12}^+,v_{34}^+,u^+_{23,13}, u^+_{23,24}, u^+_{14,13}, u^+_{14,24}: (\mathbb D,\partial\mathbb D)\to (\mathbb X, L_{q_+})$ by 
	\begin{table}[H]
		\renewcommand{\arraystretch}{1.5} 
		\centering
		\begin{tabular}{c|c|c|c|c|c|c}
			& $Z_{13}$  &  $Z_{24}$ & $g_{12}$ & $g_{23}$& $g_{34}$ & $g_{14}$ \\\hline
			$v_{12}^+$     & $\sqrt{R^2+z}$ & $\sqrt{R^2+z}$ & $z$ & $R$ & $1$ & $R$ \\\hline 
			$v_{34}^+$     & $\sqrt{R^2+z}$ & $\sqrt{R^2+z}$ & 1 & $R$ & $z$ & $R$ \\\hline
			$u_{23,13}^+$ & $\frac{1+R^2z}{R^2+z}\sqrt{R^2+z}$ & $\sqrt{R^2+z}$ & 1 & $Rz$ & 1 & $R$ \\\hline
			$u_{23,24}^+$ & $\sqrt{R^2+z}$ & $\frac{1+R^2z}{R^2+z}\sqrt{R^2+z}$ & 1 & $Rz$ & 1 & $R$ \\\hline
			$u_{14,13}^+$ & $\frac{1+R^2z}{R^2+z}\sqrt{R^2+z}$ & $\sqrt{R^2+z}$ & 1 & $R$ & 1 & $Rz$ \\\hline
			$u_{14,24}^+$ & $\sqrt{R^2+z}$ & $\frac{1+R^2z}{R^2+z}\sqrt{R^2+z}$ & 1 & $R$ & 1 & $Rz$ \\\hline
		\end{tabular}
	\end{table}
	
	One can check that their boundaries are indeed contained in $L_{q_+}$.
	Then, we define
	\begin{equation}
		\label{alpha_beta_+_eq}
		\alpha_{12}^+,\alpha_{34}^+,\beta^+_{23,13}, \beta^+_{23,24}, \beta^+_{14,13}, \beta^+_{14,24} \in \mathscr R(U_+')
	\end{equation}
	to be the sections of the sheaf $\mathscr R$ over $U_+$ such that their stalks at $q_+$ are represented by the above disks $v_{12}^+,v_{34}^+,u^+_{23,13}, u^+_{23,24}, u^+_{14,13}, u^+_{14,24}$ respectively. 
	Under $\mathcal I_\pm$, the images of the above disk classes can be described by the following table of intersection numbers:
	\begin{table}[H]
		\centering
		\begin{tabular}{c|c|c|c|c|c|c}
			& $D_{13}$  &  $D_{24}$ & $D_{12}$ & $D_{23}$& $D_{34}$ & $D_{14}$ \\\hline
			$\alpha_{12}^+$     &   &   & $1$ &   &   &   \\\hline 
			$\alpha_{34}^+$     &   &   &   &   & $1$ &   \\\hline
			$\beta_{23,13}^+$ & $1$ &   &   & $1$ &   &   \\\hline
			$\beta_{23,24}^+$ &   & $1$ &   & $1$ &   &   \\\hline
			$\beta_{14,13}^+$ & $1$ &   &   &   &   & $1$ \\\hline
			$\beta_{14,24}^+$ &   & $1$ &   &   &   & $1$ \\\hline
		\end{tabular}
	\end{table}
	
	A schematic figure is depicted below in Figure \ref{fig:disks}. We use different colors to label the six holomorphic disks.
	\begin{center}
		\begin{figure}[H]
			\centering
			\includegraphics[width=0.8\linewidth]{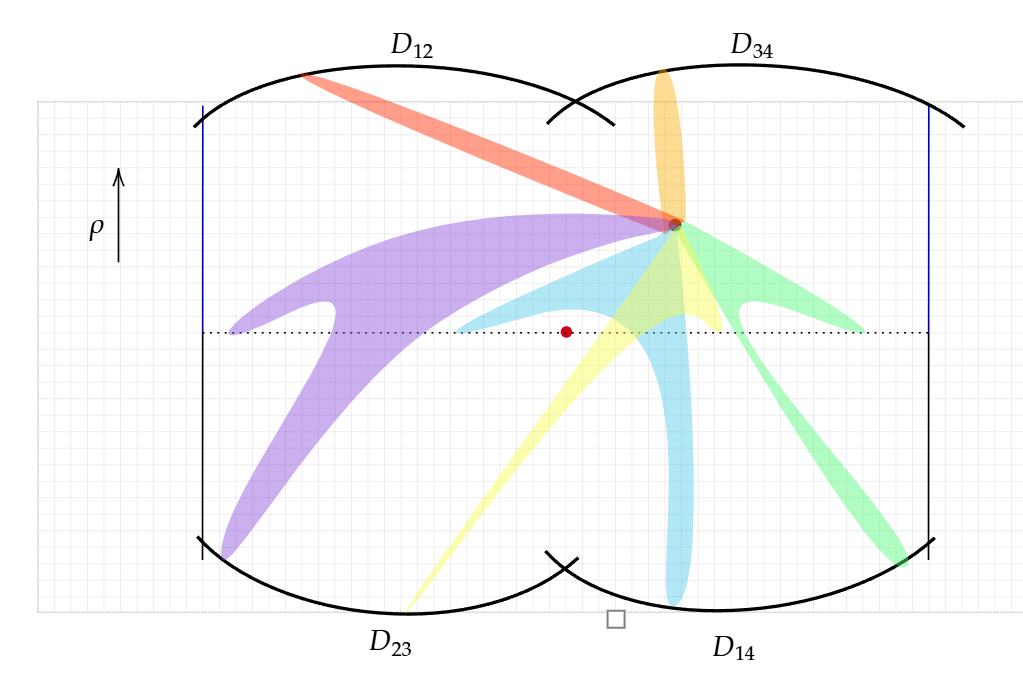}
			\caption{The six holomorphic disks bounded by a regular fiber over $U_+$}
			\label{fig:disks}
		\end{figure}
	\end{center}
	
	Similarly, one can find (holomorphic) disks $v_{23}^-,v_{14}^-,u_{12,13}^-,u_{12,24}^-, u_{34,13}^-, u_{34,24}^-:(\mathbb D,\partial\mathbb D)\to (\mathbb X,L_{q_-})$ as below:
	\begin{table}[H]
		\centering
		\renewcommand{\arraystretch}{1.5} 
		\begin{tabular}{c|c|c|c|c|c|c}
			& $Z_{13}$  &  $Z_{24}$ & $g_{12}$ & $g_{23}$& $g_{34}$ & $g_{14}$\\\hline
			$v_{23}^-$& $\sqrt{R^2+z}$& $\sqrt{R^2+z}$& $R$ & $z$& $R$& $1$\\ \hline 
			$v_{14}^-$& $\sqrt{R^2+z}$& $\sqrt{R^2+z}$& $R$& $1$& $R$ & $z$\\\hline
			$u_{12,13}^-$& $\frac{1+R^2z}{R^2+z}\sqrt{R^2+z}$& $\sqrt{R^2+z}$& $Rz$& $1$& $R$ & $1$\\\hline
			$u_{12,24}^-$& $\sqrt{R^2+z}$& $\frac{1+R^2z}{R^2+z}\sqrt{R^2+z}$& $Rz$& $1$& $R$& $1$\\\hline
			$u_{34,13}^-$& $\frac{1+R^2z}{R^2+z}\sqrt{R^2+z}$& $\sqrt{R^2+z}$& $R$& $1$& $Rz$& $1$\\\hline
			$u_{34,24}^-$& $\sqrt{R^2+z}$& $\frac{1+R^2z}{R^2+z}\sqrt{R^2+z}$& $R$& $1$& $Rz$& $1$\\\hline
		\end{tabular}
	\end{table}
	By requiring that the stalks at $q_-$ are represented by them respectively, we also obtain the sections
	\begin{equation}
		\label{alpha_beta_-_eq}
		\alpha_{23}^-,\alpha_{14}^-, \beta_{12,13}^-,\beta_{12,24}^-, \beta_{34,13}^-, \beta_{34,24}^- \in \mathscr R(U_-')
	\end{equation}
	They admit intersections
	
	\begin{table}[H]
		\centering
		\begin{tabular}{c|c|c|c|c|c|c}
			& $D_{13}$  &  $D_{24}$ & $D_{12}$ & $D_{23}$& $D_{34}$ & $D_{14}$\\\hline
			$\alpha_{23}^-$&  &  &   & $1$&  &  \\ \hline 
			$\alpha_{14}^-$&  &  &  &  &   & $1$\\\hline
			$\beta_{12,13}^-$& $1$&  & $1$&  &   &  \\\hline
			$\beta_{12,24}^-$&  & $1$& $1$&  &  &  \\\hline
			$\beta_{34,13}^-$& $1$&  &  &  & $1$&  \\\hline
			$\beta_{34,24}^-$&  & $1$&  &  & $1$&  \\\hline
		\end{tabular}
	\end{table}

	\begin{lem}
		$\mathcal I_{13}$ and $\mathcal I_{24}$ are isomorphisms. \label{lemIiso}
	\end{lem}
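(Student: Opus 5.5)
Both $\mathscr R(U_{13})$ and $\mathbb Z^5$ are free of rank $5$: each $\mathscr R|_q\cong\pi_2(\mathbb X,L_q)$ is free on four disk classes and $\mathcal H$, by the split exact sequence. So it is enough to show $\mathcal I_{13}$ is well defined and surjective, since a surjection between free abelian groups of equal finite rank is an isomorphism. The same argument, with the roles of $13$ and $24$ exchanged, handles $\mathcal I_{24}$.

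\emph{Well-definedness.} I first check that $\mathcal I_{13}$ is a homomorphism of local systems over $U_{13}$. The intersection number $\beta\cdot D$ is a topological invariant of $\beta\in\pi_2(\mathbb X,L_q)$ provided $L_q\cap D=\emptyset$, and it is then locally constant in $q$. For $D=D_{12},D_{23},D_{34},D_{14}$ this holds for every $q\in B$, since $L_q\subset X$. For $D_{13}$, the set $\pi(D_{13})=\bar\Theta_{13}$ meets $B_0$ exactly in $\Theta_{13}$, and $\Theta_{13}\subset\mathcal N_{13}$, which has been removed from $U_{13}$. Hence $L_q\cap D_{13}=\emptyset$ for all $q\in U_{13}$, and each entry of $\mathcal I_{13}$ is locally constant on $U_{13}$. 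The only subtlety is that $U_{13}$ need not be simply connected, since it still contains loops around $\Theta_{24}$. However, $\mathcal I_{13}$ is defined stalkwise by intersection numbers with divisors that avoid every fiber over $U_{13}$, so it automatically commutes with any monodromy there. It is therefore a morphism of local systems, and it suffices to check bijectivity at a single stalk, say $q_+\in U_{13}$ (note $\rho>0$ there, so $q_+\notin\mathcal N_{13}$).

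\emph{Surjectivity at $q_+$.} From the intersection table for the sections in \eqref{alpha_beta_+_eq}, after dropping the $D_{24}$ column, the classes $\alpha^+_{12}$, $\alpha^+_{34}$, $\beta^+_{23,13}$, $\beta^+_{14,13}$ map to
\[e_{12},\quad e_{34},\quad e_{13}+e_{23},\quad e_{13}+e_{14},\]
and $\mathcal H$ maps to $(1,1,1,1,1)$. Subtracting the first four images from that of $\mathcal H$ gives $\mathcal I_{13}(\mathcal H-\alpha^+_{12}-\alpha^+_{34}-\beta^+_{23,13}-\beta^+_{14,13})=-e_{13}$. From $-e_{13}$ together with the first four images we recover $e_{13}$, $e_{23}$ and $e_{14}$, so the image contains the whole standard basis and $\mathcal I_{13}$ is surjective onto $\mathbb Z^5$. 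Equivalently, the $5\times5$ matrix of these images has determinant $\pm1$. For $\mathcal I_{24}$ I use $\beta^+_{23,24}$ and $\beta^+_{14,24}$ in place of $\beta^+_{23,13}$ and $\beta^+_{14,13}$; the computation is identical, and $q_+\in U_{24}$ as well.

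The step I expect to require the most care is the well-definedness: one must justify that $\beta\cdot D_{13}$ is invariant on all of $U_{13}$, and not just on the contractible pieces $U_\pm$. The rank count and the determinant computation are routine.
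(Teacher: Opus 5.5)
Your argument is correct, but it obtains injectivity by a different route from the paper.

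\textbf{The paper's route.} It proves injectivity directly. A class in $\ker\mathcal I_{13}$ is represented by a disk missing $D_{13}\cup D_{12}\cup D_{23}\cup D_{34}\cup D_{14}$, so it lies in the cluster torus $(\mathbb C^*)^4$, where $\pi_2((\mathbb C^*)^4,L_{q_+})=0$. Surjectivity is then checked with $\alpha^+_{12},\beta^+_{23,24},\alpha^+_{34},\beta^+_{14,24}$, which $\mathcal I_{13}$ sends straight to unit vectors, together with $\mathcal H$ minus their sum.

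\textbf{Your route.} You prove only surjectivity. You use $\beta^+_{23,13},\beta^+_{14,13}$ instead, which costs one extra subtraction but is equally valid. You then deduce injectivity from a rank count: the stalk $\pi_2(\mathbb X,L_{q_+})$ is free of rank $5$ by the split exact sequence, and a surjection $\mathbb Z^5\to\mathbb Z^5$ is bijective.

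\textbf{What each buys.} Your route is shorter. It also avoids a step the paper asserts without justification, namely that vanishing algebraic intersection numbers let one push a representative off the divisors. The paper's route instead gives a geometric reason for the trivial kernel and does not rely on the rank computation. You also make explicit why $\beta\cdot D_{13}$ is well defined on $U_{13}$, which the paper leaves implicit.

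\textbf{A small inaccuracy.} $U_{13}$ is in fact contractible, as the paper uses. $\Theta_{24}$ is a codimension-one wall, not something a loop can wind around. Every non-contractible loop in $B_0$ winds around $\Gamma$ and therefore crosses $\Theta_{13}$, which has been removed. This is harmless for your proof, because your stalkwise argument for a morphism of local systems over a connected base does not need simple connectivity.
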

	
	\begin{proof}
		Without loss of generality, we only consider the case of $\mathcal I_{13}$.
		Suppose $\beta\in \mathscr R(U_{13})$ lies in the kernel of $\mathcal I_{13}$,
		and let $u$ be a representative of $\beta$. Due to the vanishing of intersection numbers, we may assume $u$ lies in $\mathbb X\setminus (D_{13}\cup D_{12}\cup D_{23}\cup D_{34}\cup D_{14})\cong (\mathbb C^*)^4$. However, $\pi_2( (\mathbb C^*)^4, L_{q_+})$ is trivial.
		This shows the injectivity.
		
		For the surjectivity, we first note that $\mathscr R(U_+')\cong \mathscr R(U_{13})$ as both $U_+'$ and $ U_{13}$ are contractible.
		By using the explicit disks, one can see the standard basis of $\mathbb Z^5$ lies in the $\mathcal I_{13}$-image.
		In fact, observe that 
		\begin{align*}
			\mathcal I_{13} (\mathcal H - \alpha_{12}^+ - \beta_{23,24}^+ - \alpha_{34}^+ - \beta_{14,24}^+) &= (1,0,0,0,0) \\
			\mathcal I_{13}(\alpha_{12}^+)  &=  (0,1,0,0,0)   \\
			\mathcal I_{13}(\beta_{23,24}^+)&=(0,0,1,0,0) \\
			\mathcal I_{13}(\alpha_{34}^+)&=(0,0,0,1,0) \\
			\mathcal I_{13}(\beta_{14,24}^+)&=(0,0,0,0,1)
		\end{align*}
		This completes the proof.
	\end{proof}
	
	These explicitly constructed disks are important later in the discussions of superpotentials; here, however, we only use their topological properties.

	\subsubsection{Detection of global sections}
	For clarity, we would like to find as many global sections of $\mathscr R$ over $B_0$, supplemented by a minimal amount of locally defined sections. 
	The above discussion enables one to identify the sections of disk classes through these intersection numbers.
	
	Let $p_{13}: \mathbb Z^6\to \mathbb Z^5$ and $p_{24}:\mathbb Z^6\to\mathbb Z^5$ be the projection maps forgetting the first and second components respectively.
	Then, by definition, we have the following commutative diagrams
	\[
	\xymatrix{
		\mathscr R(U_{13}) \ar[d]^{\mathcal I_{13}} \ar[rr]^{r_{13,\pm}}_{\cong} & & \mathscr R(U_\pm') \ar[d]^{\mathcal I_\pm} & & \mathscr R(U_{24}) \ar[ll]_{r_{24,\pm}}^{\cong}  \ar[d]^{\mathcal I_{24}} \\
		\mathbb Z^5 & & \mathbb Z^6 \ar[ll]_{p_{13}} \ar[rr]^{p_{24}} & & \mathbb Z^5
	}
	\]
	Here $r_{13,\pm}$ and $r_{24,\pm}$ denote the restriction maps of the sheaf $\mathscr R$. They are actually isomorphisms since $U_{13}, U_{24}, U_\pm'$ are contractible.

	Regarding the covering $\{U_{13},U_{24}\}$ of $B_0$, two sections $\gamma'\in \mathscr R(U_{13})$ and $\gamma''\in\mathscr R(U_{24})$ glue to a global section of $\mathscr R$ over $B_0$ if and only if $r_{13,\pm}(\gamma') = r_{24,\pm}(\gamma'')$.
	Meanwhile, regarding the covering $\{U_+,U_-\}$ and the natural identifications $\mathscr R(U_\pm)\cong \mathscr R(U_\pm')$, the use of the intersection numbers yields the following:
	
	\begin{lem}[A criterion to detect global sections]
		\label{criterion_global_lem}
		Let $\gamma_+ \in\mathscr R(U_+')$ and $\gamma_- \in\mathscr R(U_-')$.
		There exists a global section $\gamma\in \mathscr R(B_0)$ such that $\gamma$ restricts to $\gamma_+$ and $\gamma_-$ if and only if
		\[
		\mathcal I_+(\gamma_+) =\mathcal I_-(\gamma_-) \quad \in \mathbb Z^6
		\]
	\end{lem}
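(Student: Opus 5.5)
We will reduce the statement to the covering $\{U_{13},U_{24}\}$ and the commutative diagram above, using Lemma \ref{lemIiso}. The key point is that $\mathcal I_\pm$ are injective: $\mathcal I_{13}=p_{13}\circ\mathcal I_\pm\circ r_{13,\pm}$ with $\mathcal I_{13}$ and $r_{13,\pm}$ isomorphisms. Moreover, a global section over $B_0$ is the same as a pair $\gamma'\in\mathscr R(U_{13})$, $\gamma''\in\mathscr R(U_{24})$ agreeing on $U_{13}\cap U_{24}$. First we check that this intersection is exactly $U_+'\sqcup U_-'$ (up to the thin neighborhoods, which we shrink as needed), with each piece contractible. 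Hence the gluing condition is precisely $r_{13,+}(\gamma')=r_{24,+}(\gamma'')$ and $r_{13,-}(\gamma')=r_{24,-}(\gamma'')$.

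For the ``only if'' direction, let $\gamma$ be global. The intersection numbers $\beta\cdot D_{ij}$ are locally constant in $q\in B_0$ as long as the boundary $L_q$ stays disjoint from $D_{ij}$. Each $D_{12},D_{23},D_{34},D_{14}$ is disjoint from every $L_q$, since $L_q\subset X$. Hence these four components of $\mathcal I(\gamma)$ are constant along any path in $B_0$. The component $\beta\cdot D_{13}$ is constant over $U_{13}$, which avoids $\bar\Theta_{13}=\pi(D_{13})$, and $U_{13}$ contains paths joining $q_+$ to $q_-$ (passing through $\Theta_{24}$). Likewise $\beta\cdot D_{24}$ is constant over $U_{24}$. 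Therefore all six components agree at $q_+$ and $q_-$, i.e.\ $\mathcal I_+(\gamma_+)=\mathcal I_-(\gamma_-)$.

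For the ``if'' direction, suppose $\mathcal I_+(\gamma_+)=\mathcal I_-(\gamma_-)=:v$. Let $\gamma'\in\mathscr R(U_{13})$ be the unique section with $r_{13,+}(\gamma')=\gamma_+$, which exists because $r_{13,+}$ is an isomorphism. By the diagram, $\mathcal I_{13}(\gamma')=p_{13}(v)$. On the other hand, $\mathcal I_{13}(r_{13,-}^{-1}\gamma_-)=p_{13}(\mathcal I_-(\gamma_-))=p_{13}(v)$ as well. Since $\mathcal I_{13}$ is injective, $r_{13,-}(\gamma')=\gamma_-$. Symmetrically we obtain $\gamma''\in\mathscr R(U_{24})$ restricting to $\gamma_\pm$ on $U_\pm'$. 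Then $\gamma'$ and $\gamma''$ agree on both components of $U_{13}\cap U_{24}$ and glue to the desired $\gamma$; restricting to $U_\pm\supset U_\pm'$ and using $\mathscr R(U_\pm)\cong\mathscr R(U_\pm')$ identifies the restrictions with $\gamma_\pm$.

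The main obstacle is the topological bookkeeping rather than any algebra. We must verify that $U_{13}$ and $U_{24}$ are contractible, as the diagram asserts; that $U_{13}\cap U_{24}$ deformation retracts onto $U_+'\sqcup U_-'$; and, most delicately, that the intersection number with $D_{13}$ is a well-defined locally constant function on $\mathscr R|_{U_{13}}$. For the last point we would argue that for $q\notin\bar\Theta_{13}$ the fiber $L_q$ is disjoint from $D_{13}$. Then $\beta\mapsto\beta\cdot D_{13}$ is a homotopy invariant on $\pi_2(\mathbb X,L_q)$ that varies continuously with $q$ as long as $L_q$ stays off $D_{13}$.
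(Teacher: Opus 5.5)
Your proof is correct and follows the paper's argument. For the `if' direction you lift $\gamma_\pm$ to $U_{13}$ (resp.\ $U_{24}$) via the isomorphisms $r_{13,\pm}$ (resp.\ $r_{24,\pm}$), use the commutative diagram and injectivity of $\mathcal I_{13}$ (resp.\ $\mathcal I_{24}$) to see the two lifts coincide, and glue over the two components of $U_{13}\cap U_{24}$; your path-transport argument for the `only if' direction, which the paper dismisses with ``the reverse is also true,'' is just the geometric content of that diagram's commutativity, since $p_{13}$ and $p_{24}$ of $\mathcal I_\pm(\gamma_\pm)$ equal $\mathcal I_{13}(\gamma|_{U_{13}})$ and $\mathcal I_{24}(\gamma|_{U_{24}})$ and together determine all six coordinates.
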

	
	\begin{proof}
		By the above commutative diagrams,
		\[
		\mathcal I_{13}\circ (r_{13,+})^{-1}( \gamma_+)
		=
		p_{13} \circ \mathcal I_+( \gamma_+) = p_{13}\circ \mathcal I_-(\gamma_-)
		=
		\mathcal I_{13}\circ (r_{13,-})^{-1}( \gamma_-)
		\]
		Since $\mathcal I_{13}$ is an isomorphism, we conclude that $(r_{13,+})^{-1}( \gamma_+)=(r_{13,-})^{-1}( \gamma_-)=:\gamma'$.
		Similarly, we also have $(r_{24,+})^{-1}( \gamma_+)=(r_{24,-})^{-1}( \gamma_-)=:\gamma''$.
		Then, it is easy to check $\gamma'\in\mathscr R(U_{13})$ and $\gamma''\in\mathscr R(U_{24})$ glue to a global section $\gamma$.
		The reverse is also true.
	\end{proof}
	
	\begin{ex}
		There is an obvious case for the above lemma: $\mathcal I_+(\mathcal H|_{U_+'})=\mathcal I_-(\mathcal H|_{U_-'})=(1,1,1,1,1,1)$.
	\end{ex}
	
	\begin{prop}
		\label{prop_gamma_i}
		For $i=1,2,3,4$, there is a global section $\gamma_i$ of the local system $\mathscr R$ such that $\gamma_i\cdot D_{ij}=1$ for $j\neq i$ and $\gamma_i\cdot D_{jk}=0$ for $j\neq i\neq k$.
		Moreover, one has $\gamma_1+\gamma_2+\gamma_3+\gamma_4=2\mathcal H$.
	\end{prop}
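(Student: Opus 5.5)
We construct each $\gamma_i$ with Lemma \ref{criterion_global_lem}: write down $\gamma_i^+\in\mathscr R(U_+')$ and $\gamma_i^-\in\mathscr R(U_-')$ as integer combinations of the explicit disk classes of \S\ref{s_explicit_disks} and $\mathcal H$, and check that $\mathcal I_+(\gamma_i^+)=\mathcal I_-(\gamma_i^-)$. Let $e_i\in\mathbb Z^6$ denote the prescribed intersection vector. In the order $(D_{13},D_{24},D_{12},D_{23},D_{34},D_{14})$ these are
\[
e_1=(1,0,1,0,0,1),\quad e_2=(0,1,1,1,0,0),\quad e_3=(1,0,0,1,1,0),\quad e_4=(0,1,0,0,1,1).
\]

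On the $+$ side the tables give the candidates directly:
\[
\gamma_1^+=\alpha_{12}^++\beta_{14,13}^+,\quad \gamma_2^+=\alpha_{12}^++\beta_{23,24}^+,\quad \gamma_3^+=\alpha_{34}^++\beta_{23,13}^+,\quad \gamma_4^+=\alpha_{34}^++\beta_{14,24}^+.
\]
On the $-$ side we take
\[
\gamma_1^-=\alpha_{14}^-+\beta_{12,13}^-,\quad \gamma_2^-=\alpha_{23}^-+\beta_{12,24}^-,\quad \gamma_3^-=\alpha_{23}^-+\beta_{34,13}^-,\quad \gamma_4^-=\alpha_{14}^-+\beta_{34,24}^-.
\]
Reading off the intersection tables row by row, one sees at once that $\mathcal I_\pm(\gamma_i^\pm)=e_i$. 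Lemma \ref{criterion_global_lem} then produces a global section $\gamma_i\in\mathscr R(B_0)$ restricting to $\gamma_i^\pm$, and this section has the stated intersection numbers.

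For the relation $\gamma_1+\gamma_2+\gamma_3+\gamma_4=2\mathcal H$, note first that $\sum_i e_i=(2,2,2,2,2,2)=\mathcal I_+(2\mathcal H)$. It remains to show that a global section is determined by its intersection vector. Restricting to $U_{13}$, the vectors $\mathcal I_{13}(\sum_i\gamma_i)$ and $\mathcal I_{13}(2\mathcal H)$ agree, since both are $p_{13}$ of the same vector in $\mathbb Z^6$. By Lemma \ref{lemIiso}, $\mathcal I_{13}$ is injective, so the two restrictions to $U_{13}$ coincide. The same argument with $\mathcal I_{24}$ shows the restrictions to $U_{24}$ coincide, and since $\{U_{13},U_{24}\}$ covers $B_0$, the equality holds globally.

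There is no serious obstacle. The only point needing care is that the sections given by the explicit disks are defined on $U_\pm'$ while the criterion is phrased there, so the bookkeeping through the identifications $\mathscr R(U_\pm)\cong\mathscr R(U_\pm')$ must be kept consistent. If one wants to avoid relying on the criterion for the relation, the injectivity of $\mathcal I_{13}$ and $\mathcal I_{24}$ in Lemma \ref{lemIiso} handles it directly, as above.
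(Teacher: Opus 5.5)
Your proof is correct and follows the paper's route. The paper also builds $\gamma_i$ from Lemma \ref{criterion_global_lem}, using exactly your choice $\alpha_{12}^++\beta_{14,13}^+$ and $\alpha_{14}^-+\beta_{12,13}^-$ for $i=1$; it treats the other $i$ as analogous, and you write them out explicitly and correctly. The paper does not separately justify the relation $\gamma_1+\gamma_2+\gamma_3+\gamma_4=2\mathcal H$, and your argument via injectivity of $\mathcal I_{13}$ and $\mathcal I_{24}$ on the cover $\{U_{13},U_{24}\}$ validly supplies it.
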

	
	\begin{proof}
		Without loss of generality, we may fix $i=1$.
		The proof is by construction.
		Consider $\gamma_+:=\alpha_{12}^++\beta^+_{14,13}$ and $\gamma_-:=\alpha_{14}^-+\beta^-_{12,13}$.
		Then, $\mathcal I_+(\gamma_+)=(1,0,1,0,0,1)$ and $\mathcal I_-(\gamma_-)=(1,0,1,0,0,1)$.
		Thus, the result follows from Lemma \ref{criterion_global_lem}.
	\end{proof}

	\begin{prop}
		\label{prop_gamma_i_partial}
		For $i=1,2,3,4$, $\partial \gamma_i$ is the global section of $\mathscr S$ that represents the orbit of the Hamiltonian $S^1$-action $\theta_i$ in (\ref{Ham_S_1_theta_1234}).
	\end{prop}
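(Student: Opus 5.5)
We want to show that $\partial\gamma_i\in\mathscr S(B_0)$ equals the class $[\theta_i\text{-orbit}]$ of the loop $t\mapsto\theta_i(e^{2\pi\mathbf i t},p)$ in $L_q$. Both sides are global sections of $\mathscr S$, so they agree everywhere once they agree at a single point. The orbit class is a global section because $\theta_i$ preserves every fiber $L_q$, which follows from the proof of Proposition~\ref{singular_locus_prop}. Moreover, $\partial$ commutes with the restriction maps. We may therefore work at the basepoint $q_+$ and check that $\partial\gamma_1|_{q_+}=\partial(\alpha_{12}^++\beta_{14,13}^+)$ is the $\theta_1$-orbit class in $\pi_1(L_{q_+})\cong\Z^4$. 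By symmetry the same argument handles $i=2,3,4$.

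We would not compute the boundaries of $v_{12}^+$ and $u_{14,13}^+$ loop by loop, since the square-root coordinates make that messy. Instead we would build a disk directly from the action. Fix $p\in L_{q_+}$ and use the $\C^*$-extension of $\theta_1$: $\lambda\cdot Z_{1k}=\lambda Z_{1k}$, with the other coordinates fixed. Set $u(z)=z\cdot p$ for $z\in\mathbb D$, interpreted projectively as $[zZ_{12}:zZ_{13}:zZ_{14}:Z_{23}:Z_{24}:Z_{34}]$. This map is holomorphic. At $z=0$ it lands in $\{Z_{1k}=0\}$, a legitimate point of $Gr(2,4)$ because $Z_{23},Z_{24},Z_{34}$ do not all vanish at $p$. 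On $|z|=1$ it is exactly the $\theta_1$-orbit of $p$, which lies in $L_{q_+}$. So $u$ defines a class $\delta_1\in\pi_2(\X,L_{q_+})$ with $\partial\delta_1=[\theta_1\text{-orbit}]$.

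Next we compute $\mathcal I_+(\delta_1)$. The coordinates $Z_{12},Z_{13},Z_{14}$ are each multiplied by $z$, which has a single simple zero in $\mathbb D$. The coordinates $Z_{23},Z_{24},Z_{34}$ are constant and nonzero on $L_{q_+}$. Hence $\mathcal I_+(\delta_1)=(1,0,1,0,0,1)=\mathcal I_+(\gamma_+)$.

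Finally we invoke injectivity. By Lemma~\ref{lemIiso} together with the commutative diagram, $\mathcal I_{13}\circ r_{13,+}^{-1}=p_{13}\circ\mathcal I_+$ is injective on $\mathscr R(U_+')$, so $\mathcal I_+$ is injective. Therefore $\delta_1=\gamma_+=\gamma_1|_{q_+}$, and applying $\partial$ gives the claim.

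The main obstacle is making the intersection count rigorous. Intersection numbers with the divisors are only well-defined for relative classes if $L_{q_+}$ is disjoint from those divisors. This holds: $L_{q_+}\subset X$ avoids $D_{ac}$, and over $\rho\neq0$ it avoids $D_{13}\cup D_{24}$, since otherwise $f=0$ and $\rho=0$. We also need each zero of $u^*Z_{jk}$ to contribute $+1$, which holds because all the maps involved are holomorphic, so intersections are positive, and the zeros are simple. A secondary point is that we have tacitly used a common meaning of $\mathcal I_\pm$ on classes not represented by the tabulated disks. This is fine because $\mathcal I_+$ is defined for arbitrary classes via intersection pairing.
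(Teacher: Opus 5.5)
Your proposal is correct and follows essentially the paper's argument. The paper also reduces to a single fiber, there the one over $((\frac23,\frac13,\frac23,\frac13),0)\in\Theta_{24}$, and represents $\gamma_1$ by the disk $[Z_{13}:Z_{24}:g_{12}:g_{23}:g_{34}:g_{14}]=[2z:1:z:1:1:z]$, which is exactly your $z\cdot p$ for $p=[2:1:1:1:1:1]$. You differ only in working over $q_+$, where all six intersection numbers are defined, and in spelling out the identification with $\gamma_1$ via injectivity of $\mathcal I_+$, which the paper leaves implicit (there it would go through $\mathcal I_{13}$, since that fiber meets $D_{24}$).
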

	
	\begin{proof}
		Without loss of generality, let's fix $i=1$.
		Since the orbit may also be viewed as a global section of $\mathscr S$, it suffices to verify the agreement at a specific point $q$ in $B_0$.
		We choose $q=(( \frac{2}{3}, \frac{1}{3}, \frac{2}{3}, \frac{1}{3}), 0)\in \Theta_{24}$, as the stalk $\gamma_1(q)\in \pi_2(\mathbb X, L_q)$ can be represented explicitly by the map setting $[Z_{13}:Z_{24}:g_{12}:g_{23}:g_{34}:g_{14}]=[2z: 1 : z: 1: 1: z]$, where $z\in\mathbb D$.
		By definition, the boundary $\partial\gamma_i(q)$ is the desired $S^1$-orbit.
	\end{proof}
	
	\subsubsection{Preferred sections and monodromy}\label{subsec_secandmono}
	We would like to pick an integral basis of $\mathscr R(U_\pm)$ that consists of as many global sections as possible. The number of independent global sections is four. 
	As the rank of $\mathscr R$ is five, one has to supplement the integral basis by another \textit{local} section on $U_+$ and $U_-$, respectively. Certain ad hoc choices seem to be unavoidable.
	We make the following choice of sections
	\begin{equation}
		\label{integral_basis_eq}
		\begin{aligned}
			\mathscr R(U_+):&=\Z\langle \gamma_1,\gamma_2,\gamma_3, \mathcal H,\a_{34}^+\rangle \\ 
			\mathscr R(U_-):&=\Z\langle\g_1,\g_2,\g_3,\mathcal H,\b_{34,24}^-\rangle
		\end{aligned}   
	\end{equation}
	The fact that the sections are linearly independent and form an integral basis can be checked by straightforward computations, using the isomorphisms $\mathcal I_{13}$ and $\mathcal I_{24}$. Indeed, the standard unit vectors $e_i=(0,...,1,0,...)$ for each $i\in \{1,...,5\}$ can be obtained as follows
	\begin{align}
		\text{ On } U_+: \begin{cases}
			e_{1}&=\mathcal I_{13}(\H-\g_1-\g_3)\\
			e_{2}&=\mathcal I_{13}(\g_1+\g_2+\a_{34}^+-\H)\\
			e_{3}&=\mathcal I_{13}(\g_3-\a_{34}^+)\\
			e_{4}&=\mathcal I_{13}(\a_{34}^+)\\
			e_{5}&=\mathcal I_{13}(\H-\g_2-\a_{34}^+)\\
		\end{cases}
		\hspace{3em}
		\text{ On } U_-: \begin{cases}
			e_{1}&=\mathcal I_{13}(\H-\g_1-\g_3)\\
			e_{2}&=\mathcal I_{13}(2\g_1+\g_2+\g_3+\b_{34,24}^--2\H)\\
			e_{3}&=\mathcal I_{13}(\H-\g_1-\b_{34,24}^-)\\
			e_{4}&=\mathcal I_{13}(\g_1+\g_3+\b_{34,24}^--\H)\\
			e_{5}&=\mathcal I_{13}(2\H-\g_1-\g_2-\g_3-\b_{34,24}^-)\\
		\end{cases}
	\end{align}
	Notice that one cannot take $\g_4$ in place of $\H$. For instance, the image under $\mathcal I_{13}$ of $\Z\langle \g_1,\g_2,\g_3,\g_4,\a_{34}^+\rangle$ does not contain $e_1$ or $e_2$; only $2e_1$ and $2e_2$ can be realized.
	
	\begin{rmk}
		More precisely, the above disk classes are defined initially either in $U_+^\prime$ and $U_-^\prime$. Take $\a_{34}^+$ for instance. It is certainly true that its extension to the entire $U_+$ is not unique, but its extensions to $U_+^\prime\cup \mathcal N_{24}$ is unique, by the dictated isomorphisms. Similarly, the extension to $U_+^\prime\cup \mathcal N_{13}$ is unique. Gluing these two pieces produces a unique section on $U_+$, which is the desired unique extension of $\a_{34}^+$. In this sense, the extension of $\a_{34}^+$ to $U_+$ is unique. The same process works for $\b_{34,24}^-$. Slightly abusing notation, the extension to $U_+$ or $U_-$ is still denoted by $\a_{34}^+$ or $\b_{34,24}^-$. In practice, this simply means forgetting the intersection with both $D_{13}$ and $D_{24}$. 
	\end{rmk}

	The disks constructed in Section \ref{s_explicit_disks} can be expressed in terms of the basis chosen above as follows
	\begin{equation}
		\label{eq_alpha_to_gamma_H}    
		\text{ In }\mathscr R(U_+):
		\begin{cases}
			\alpha_{12}^+ 
			&  = \gamma_1 +\gamma_2+\a_{34}^+ -\mathcal H \\
			\beta_{23,13}^+  
			&= \g_3-\a_{34}^+\\
			\beta_{23,24}^+
			&=\mathcal H-\g_1-\a_{34}^+ \\
			\beta_{14,13}^+
			&= \mathcal H-\g_2-\a_{34}^+\\
			\beta_{14,24}^+
			&=\g_4-\a_{34}^+ 
		\end{cases}
		\hspace{3em}
		\text{ In }\mathscr R(U_-):\begin{cases}
			\alpha_{23}^- 
			&=\mathcal H-\g_1-\b_{34,24}^- \\
			\a_{14}^-&=\g_4-\b_{34,24}^-\\
			\beta_{12,13}^-&=\g_1-\g_4+\b_{34,24}^-\\
			\b_{12,24}^-&= \gamma_1 +\gamma_2+\b_{34,24}^- -\mathcal H \\
			\beta_{34,13}^-&=\mathcal H-\g_2+\b_{34,24}^--\g_4
		\end{cases}
	\end{equation}
	\label{disks}

	This enables one to compute the monodromy of our integral basis. Global sections are not subject to the monodromy; the only source for monodromy is the non-globality of the local sections we choose. We extend $\a_{34}^+$ first through $\mathcal N_{24}$
	then through $\mathcal N_{13}$. On $\mathcal N_{24}$ one has 
	\begin{align}
		\text{ On }\mathcal N_{24}: \a_{34}^+=\b_{34,24}^-,\hspace{3em} \text{ On }\mathcal N_{13}:\b_{34,24}^-= \mathcal H-\g_1-\g_3+\a_{34}^+\label{eq_disk_monodromy}
	\end{align}
	
	
	\subsection{Integral affine atlas} \label{s_int_aff_at}
	The integral affine structure on the base is given by the action coordinates of Lagrangian fibers.
	Remark that the action coordinates can be viewed as the symplectic area of the cylinder with boundary on the adjacent Lagrangian torus fibers. See Figure \ref{fig:cylinder} and \cite{action_angle}.
	\begin{figure}[h]
		\centering
		\includegraphics[width=0.5\linewidth]{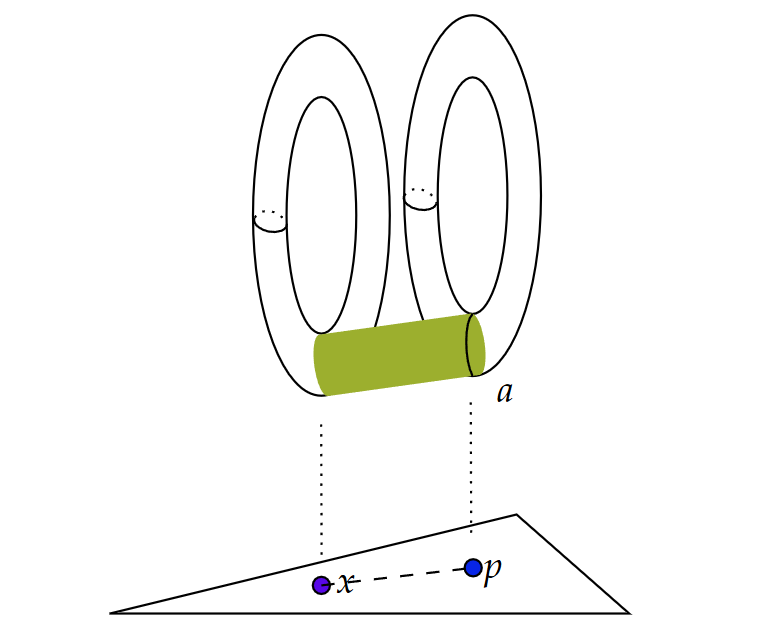}
		\caption{The geometric interpretation of the action coordinates.}
		\label{fig:cylinder}
	\end{figure}
	
	Inspired by this, we are interested in the symplectic areas of topological disks in $\pi_2(X,L_q)$ and view them as functions of $q$. By considering local trivializations of the local systems $\mathscr R =\bigcup_q \pi_2(\mathbb X, L_q)$ and $\mathscr S=\bigcup_q \pi_1(L_q)$, we may potentially find concrete action coordinates on the base.
	
	Recall that there are natural global sections $\gamma_i$ of $\mathscr R$ as in Proposition \ref{prop_gamma_i}.
	Then, we can view $E(\gamma_i)$ as a real-valued function on $B$.
	
	\begin{prop}
		\label{prop_E(gamma_i)}
		$E(\gamma_i)\circ \pi=H_i$ as functions on $X$ for $i=1,2,3,4$. Moreover, the symplectic area of the complex line is $E(\mathcal H)=1$.
	\end{prop}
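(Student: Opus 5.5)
The plan is to use the standard relation between symplectic area of a disk class and the moment map of a Hamiltonian circle action whose orbits are the disk boundaries. By Proposition \ref{prop_gamma_i_partial}, $\partial\gamma_i$ is the orbit of $\theta_i$ on $L_q$. So the first step is to compute $E(\gamma_i)$ at one convenient fiber where an explicit representative exists, and then compare the variation of $E(\gamma_i)$ in $q$ with the variation of $H_i$.

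For the variation, take a path $q_t$ in $B_0$. The difference $E(\gamma_i(q_1))-E(\gamma_i(q_0))$ equals the symplectic area of a cylinder swept by the $\theta_i$-orbits along a path of points $p_t\in L_{q_t}$. This cylinder is parametrized by $(s,t)\mapsto \theta_i(s)\cdot p_t$, and its area is $\int \omega(X_{H_i},\dot p_t)\,dt = \pm\int dH_i(\dot p_t)\,dt = \pm\big(H_i(p_1)-H_i(p_0)\big)$, using the convention $dH=-\iota_X\omega$. Since each $H_i$ is constant on each fiber (fibers lie in level sets of $\mu$), $E(\gamma_i)-H_i$ is locally constant on $B_0$, hence constant because $B_0$ is connected; $\Gamma$ has codimension $2$. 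I would fix the sign and normalization using $S^1=\mathbb R/\mathbb Z$ and the orientation of the disk boundary as the $\theta_i$-orbit. Both $E(\gamma_i)\circ\pi$ and $H_i$ are continuous on $X$, so the identity extends over $\pi^{-1}(\Gamma)$ as well.

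To fix the constant, I would use a degenerating representative. The cleanest choice is to let the fiber approach the boundary where $H_i\to 0$. For $i=1$, a disk $z\mapsto$ (coordinates with $Z_{1k}$ multiplied by $z$) has area $\int_{\mathbb D}$ of the pullback of $\omega$. Equivalently, I would use the known fact that for a Hamiltonian $S^1$-action, the disk obtained by contracting an orbit to a fixed point $p_0$ of the action has area equal to $H_i(\text{orbit})-H_i(p_0)$. The disk from the proof of Proposition \ref{prop_gamma_i_partial}, $[2z:1:z:1:1:z]$, is exactly of this form: it is the $\mathbb C^*$-extension of the $\theta_1$-orbit, with center $z=0$ at the point $[0:1:0:1:1:0]$, where $H_1=0$. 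Since the flow of the gradient of $H_1$ along $\mathbb R_{>0}\subset\mathbb C^*$ is monotone, the area of the disk is $H_1(\partial)-H_1(\text{center})=H_1(\partial)$. This gives $E(\gamma_1)=H_1$ at that fiber, and hence everywhere. The same argument works for each $i$ by symmetry.

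Finally, Proposition \ref{prop_gamma_i} gives $\gamma_1+\cdots+\gamma_4=2\mathcal H$. Summing the previous identities yields $2E(\mathcal H)=H_1+H_2+H_3+H_4=2$, so $E(\mathcal H)=1$; alternatively, this is the normalized Fubini–Study area of a line. I expect the main obstacle to be the normalization and sign bookkeeping in the area formula for the $\mathbb C^*$-orbit disk, namely checking that the Fubini–Study form with period $1$ matches the stated $H_i$. I would verify this directly in an affine chart using the explicit formula for $H_i$.
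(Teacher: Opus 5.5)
Your proposal is correct, but it takes a different route from the paper.

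\textbf{The paper's route.} The paper does not use a variation-plus-normalization argument. It fixes an arbitrary $q$ and uses $\gamma_1\cdot D_{34}=0$ to place a representative of $\gamma_1(q)$ inside the $\theta_1$-invariant affine chart $\{Z_{34}\neq 0\}$. It then applies Stokes with the explicit invariant primitive $\lambda=\frac{\mathbf i}{2\pi}\bar\partial\log(1+\sum|z_{jk}|^2)$ of $\omega$. Since the boundary is a $\theta_1$-orbit (Proposition \ref{prop_gamma_i_partial}) and $\lambda(V)=H_1$ pointwise along it, this gives $E(\gamma_1)(q)=H_1$ at every $q$ simultaneously. There is no constant to determine, no appeal to connectedness of $B_0$, and no separate sign analysis.

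\textbf{Your route.} Your argument is more coordinate-free. It uses only that $H_i$ is the moment map and that $\partial\gamma_i$ is an orbit, and it reduces everything to one explicit holomorphic disk: the $\mathbb C^*$-orbit disk $[2z:1:z:1:1:z]$, which caps off at the fixed point $[0:1:0:1:1:0]$ where $H_1=0$. Its boundary indeed has $H_1=6/9=2/3=x_1$. Your worry about signs resolves itself once you note that the orbit-disk formula is the cylinder formula applied to the annulus $\epsilon\le |z|\le 1$ with $\epsilon\to 0$. The same orientation convention therefore governs both steps, and positivity of the area of a nonconstant holomorphic disk forces the sign $+$ in each.

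\textbf{The line class.} Your deduction of $E(\mathcal H)=1$ from $\gamma_1+\gamma_2+\gamma_3+\gamma_4=2\mathcal H$ and $H_1+H_2+H_3+H_4=2$ is slicker than the paper's direct integration over a line. It also avoids a slip in the paper: the sphere $[0:0:s:0:t:0]$ used there violates the Pl\"ucker relation $Z_{13}Z_{24}=g_{12}g_{34}+g_{23}g_{14}$ when $st\neq 0$, so it does not lie in $Gr(2,4)$. The conclusion is unaffected, since any projective line has area $1$ and, for instance, $[s:0:t:0:0:0]$ does lie in $Gr(2,4)$.
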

	\begin{proof}
		Given a fixed point $q$ in the base, we represent $\gamma_1(q)\in \pi_2(\mathbb X, L_q)$ by a map $u: (\mathbb D, \partial \mathbb D) \to (\mathbb X, L_q)$. Since $\gamma_1\cdot D_{34}=0$, we may require that $u$ does not meet $D_{34}=\{Z_{34}=0\}$ and can be regarded as a map into an affine chart $U\cong \mathbb C^5$ with coordinates $z_{ij}=Z_{ij}/Z_{34}$.
		By Proposition \ref{prop_gamma_i_partial}, we may assume that the tangent vector of $\partial u : S^1 = \mathbb R/\mathbb Z\to U$ at $t$ is given by
		$V(t)=2\pi \mathbf i \ \left(  z_{12} \partial_{z_{12}} -  \bar z_{12} \partial_{\bar z_{12}} + z_{13} \partial_{z_{13}} -  \bar z_{13} \partial_{\bar z_{13}} +z_{14} \partial_{z_{14}} -  \bar z_{14} \partial_{\bar z_{14}} \right)$.
		On the above affine chart $U$, we have $\omega=d\lambda$ where 
		\[
		\lambda=\frac{\mathbf i}{2\pi} \bar \partial \log (1+ |z_{12}|^2+|z_{13}|^2+ |z_{14}|^2+ |z_{23}|^2+|z_{24}|^2)
		\]
		Therefore, by Stokes' formula and by direct computations,
		\begin{align*}
			E(\gamma_1)(q)= \int_{\mathbb D} u^*\omega = \int_0^1 \lambda (V(t)) = H_1
		\end{align*}
		Similarly, we can compute $E(\gamma_i)$ for $i=2,3,4$.
		On the other hand, 
		one can represent the line $\cH$ by a holomorphic sphere $\C\P^1$, parametrized by $[s:t]\mapsto[0:0:s:0:t:0]$, which is contained in $Gr(2,4)_\C$.
		One can compute directly that the symplectic area of this sphere is 1. Indeed, work in the chart $s\neq 0$, and the sphere is parametrized by $t/s=:z$ on the complex plane, up to measure zero. The Fubini-Study form evaluates to ${i\over 2\pi}{dz\wedge d\bar z\over(1+|z|^2)^2}$ on $\C\P^1$. Direct integration gives \[ \int_\C \frac{i}{2\pi}\frac{dz\wedge d\bar z}{(1+|z|^2)^2}=\frac{1}{\pi}\int_{\R^2}\frac{dvol}{(1+r^2)^2}=\int_0^\infty dr^2/(1+r^2)^2=-1/(1+r^2)|^\infty_0=1\]
	\end{proof}

	%
	
	
	Recall that the point in $\Delta_{2,4}$ is given by $x=(x_1,x_2,x_3,x_4)$ with $x_1+x_2+x_3+x_4=2$, so $\bar x=(x_1,x_2,x_3)$ gives a coordinate system on $\Delta_{2,4}^\circ$.
	Slightly abusing the notation, due to Proposition \ref{prop_E(gamma_i)}, we may write
	\[
	x_1=E(\gamma_1) \ , \ x_2=E(\gamma_2) \ , \ x_3 =E(\gamma_3)
	\]
	as real-valued functions on $B=\Delta_{2,4}^\circ \times \mathbb R$.
	We also introduce:
	\begin{equation}
		\label{psi_pm_eq}
		\psi_+ = E(\a_{34}^+) : U_+ \to \mathbb R  \ , \qquad  
		\psi_-  = E(\b_{34,24}^-) :U_-  \to \mathbb R .
	\end{equation}
	Recall that $\a_{34}^+$ and $\b_{34,24}^-$ are sections in $\mathscr R(U_+)$ and $\mathscr R (U_-)$, respectively, defined in \eqref{alpha_beta_+_eq} and \eqref{alpha_beta_-_eq} by the following intersection pattern
	\begin{table}[H]
		\centering
		\begin{tabular}{c|c|c|c|c|c|c}
			& $D_{13}$  &  $D_{24}$ & $D_{12}$ & $D_{23}$& $D_{34}$ & $D_{14}$ \\\hline 
			$\alpha_{34}^+$     &   $0$&   $0$&   $0$&   $0$& $1$ &   $0$\\\hline
			$\beta_{34,24}^-$     &   $0$&   $1$&   $0$&   $0$& $1$ &   $0$\\\hline
		\end{tabular}
	\end{table}

	Observe that $U_+\cap U_-=\mathcal N_{13}\sqcup \mathcal N_{24}$ has two connected components.
	By applying the energy to the relation \eqref{eq_disk_monodromy}, we obtain that
	$	\psi_+ = \psi_-$ on $\mathcal N_{24}$ and $\psi_+=\psi_-+x_1+x_3-E(\mathcal H)$ on $\mathcal N_{13}$.
	Equivalently, on their common domains, we have
	\begin{align}
		\psi_+=\psi_-+\min\{0,x_1+x_3-1\}  \label{eq_A_intaff}
	\end{align}

	Define
	\begin{equation}
		\label{eq_chi_+}
		\chi_+ = ( x_1 \ , \ x_2 \ , \ x_3 \ , \ E(\alpha_{34}^+) ) = (x_1 \ , \ x_2 \ , \ x_3 \ , \psi_+( x, \rho))  \ : \ U_+ \to V_+ 
	\end{equation}
	\[
	\chi_- = (x_1 \ , \ x_2 \ , \ x_3 \ , \ E(\beta_{34,24}^-) ) = (x_1 \ , \ x_2 \ , \ x_3 \ , \ \psi_-( x, \rho) ) \ : \  U_- \to V_-
	\]
	where $V_\pm=\chi_\pm(U_\pm)\subset \mathbb R^4$.
	Since $U_+$ and $U_-$ are contractible, it follows from \cite[Theorem 2.2]{action_angle} that the map $\chi_+$ is a local diffeomorphism (not necessarily a diffeomorphism at this moment), and similarly for $\chi_-$.

	The following lemma aims to show that they are indeed diffeomorphisms.
	Introduce a continuous function $\psi$ on $B$ defined by 
	\begin{equation}
		\label{eq_psi_defn}
		\psi(x,\r)=\begin{cases}
			\psi_-(x,\r)+\min\{0,x_1+x_3-1\}, \quad & \text{on} \ U_-\\
			\psi_+(x,\r),\quad & \text{on} \ U_+
		\end{cases}
	\end{equation}
	Remark that the function $\psi(x,\rho)$ can be represented by the symplectic area of some topological disk bounded by the Lagrangian fiber over $(x,\rho)\in B$. The presence of the Lagrangian fibration then allows one to extend continuously this function over the singular locus $\Gamma = \Pi \times \{0\}$ (Proposition \ref{singular_locus_prop}), obtaining a continuous function defined on the entire $B$, still denoted as
	\[
	\psi: B\to \mathbb R
	\]

	\begin{lem}
		\label{decreasing_lem}
		For each fixed $x\in \Delta_{2,4}$, the function $\rho\mapsto \psi(x,\rho)$ is strictly decreasing.
		In particular, $\chi_+$ and $\chi_-$ are diffeomorphisms.
		\label{lem_monotonic}
	\end{lem}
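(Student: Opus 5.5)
We plan to compute the $\rho$-derivative of $\psi$ via the action--angle dictionary: for a section $\beta$ of $\mathscr R$ over a contractible open set, the derivative of $E(\beta)$ along a path in the base equals the flux of $\omega$ through the cylinder swept out by $\partial\beta$. Equivalently, $dE(\beta)$ at $q$ is the covector on $T_qB$ given by pairing $\partial\beta\in H_1(L_q)$ with the period map. Hence $\partial_\rho E(\beta)(x,\rho)=\int_{\partial\beta}\iota_{\tilde V}\omega$, where $\tilde V$ is any lift of $\partial_\rho$ to $T X$ along $L_{x,\rho}$.

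The key step is to identify the $S^1$-family generated by the fourth direction. The functions $H_1,\dots,H_4$ generate the torus actions $\theta_i$. We would like a Hamiltonian whose time-one flow rotates the phase of the conic coordinate $f-1$, that is, the circle $\{|f-1|=e^\rho\}$ in the fibre of $f$, with the $x$ coordinates fixed. On $\mu^{-1}(x)/\T^3$, the reduced space is (away from $\Gamma$) a complex curve mapped by $f$ onto $\C$, and the curves $|f-1|=e^\rho$ foliate it. $\psi$ is then, up to additive constants depending on $x$, the reduced area enclosed by these circles on one side. Concretely, on $U_+$ the boundary $\partial\alpha_{34}^+$ projects in the reduced space to the circle $|f-1|=e^\rho$ traversed with a definite orientation. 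By Stokes, $\psi_+(x,\rho)$ equals (constant in $\rho$) minus the $\omega_{red}$-area of the region $\{|f-1|\le e^\rho\}$ in the reduced curve, or equivalently plus the area of the exterior region $\{|f-1|\ge e^\rho\}$. The orientation check comes from the explicit disk $v^+_{34}$: its image under $f$ has $g_{34}=z\to0$, so $f\to\infty$, so the disk sweeps the outside. As $\rho$ increases, the exterior area strictly decreases because $\omega_{red}$ is a positive area form on the reduced curve.

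In carrying this out we would (i) restrict to $\rho$ fixed $x$ and to $U_+'$, using the disk $v^+_{34}$ to fix the sign; (ii) argue similarly on $U_-'$ with $\beta^-_{34,24}$. Here the boundary circle is the same circle with the same orientation, since $\partial\beta_{34,24}^-$ and $\partial\alpha^+_{34}$ agree on $\mathcal N_{24}$ by \eqref{eq_disk_monodromy}. (iii) Through \eqref{eq_psi_defn}, $\psi$ differs from $\psi_\pm$ only by functions of $x$, so $\partial_\rho\psi<0$ wherever smooth. Continuity of $\psi$ across $\Gamma$ then gives strict monotonicity on every line $\{x\}\times\R$, including those meeting $\Pi$. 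At the single point $\rho=0$ the reduced curve is singular (the node of $f^{-1}(0)$), but the area of a region is still strictly monotone in the region.

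Finally, for the diffeomorphism claim: $\chi_\pm$ is a local diffeomorphism by \cite[Theorem 2.2]{action_angle}. Injectivity follows because the first three coordinates recover $x$ and, for fixed $x$, the map $\rho\mapsto\psi_\pm(x,\rho)$ is strictly decreasing on the connected slice $U_\pm\cap(\{x\}\times\R)$, which we choose to be an interval when thickening $\mathcal N_{13},\mathcal N_{24}$. An injective local diffeomorphism onto its image $V_\pm$ is a diffeomorphism. The main obstacle is step (i): making rigorous that $\partial\alpha_{34}^+$ projects to exactly one positively oriented copy of the $f$-circle in the reduced space. We would try to do this by computing $f\circ\partial v^+_{34}$ directly, checking that it winds once around $1$, and then invoking Proposition \ref{prop_gamma_i_partial} to discard the $\T^3$-orbit components.
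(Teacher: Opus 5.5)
Your proof is correct, but it gets the sign of $\partial_\rho\psi_\pm$ by a different mechanism than the paper.

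\textbf{The paper's route.} It uses \cite[Theorem 2.2]{action_angle} to know that $\det d\chi_\pm=\partial_\rho\psi_\pm$ never vanishes on the connected set $U_\pm$. It then fixes the sign by one explicit evaluation at $x_0$: the areas $A_\pm(R)$ of $v^+_{34}$ and $u^-_{34,24}$ are elliptic integrals, with $A_+$ decreasing and $A_-$ increasing in $R$.

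\textbf{Your route.} You get the sign at every point of $B_0$ from the flux through the cylinder swept inside $\mu^{-1}(x)$. That cylinder descends to an annulus in the reduced surface, so $\partial_\rho\psi_\pm$ equals the winding number of $f-1$ along $\partial\alpha^+_{34}$ (resp.\ $\partial\beta^-_{34,24}$) times a positive reduced-area density. This is more conceptual and identifies $\psi$ with a reduced area. It also makes nondegeneracy of $d\chi_\pm$ an output rather than an input. The price is a few K\"ahler-reduction facts in place of one integral.

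\textbf{The step you flag as the main obstacle is immediate.} On $\partial\mathbb D$ one has $f\circ v^+_{34}=1+R^2/z$ and $f\circ u^-_{34,24}=1+1/(R^2z)$, so both wind $-1$ around $1$. Since $f-1$ is nonvanishing on $X$ and $\T^3$-invariant, this winding defines a locally constant homomorphism on $\pi_1(L_q)$ that vanishes on the $\partial\gamma_i$.

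\textbf{What does need an argument} is positivity of $\omega_{red}$ for the orientation pulled back by $f$. Off $D_{13}\cap D_{24}$ the torus acts freely. Let $E$ be the K\"ahler-orthogonal complement of the complex span of the orbit directions. Then $E$ lies in $\ker d\mu$ and maps isomorphically onto the reduced tangent space, and $\omega|_E>0$ in the complex orientation. Moreover $df_p|_E$ is a $\C$-linear isomorphism, because $f$ is holomorphic, invariant under the complexified torus, and has no critical points off $D_{13}\cap D_{24}$.

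\textbf{Two minor points.} First, your ``by Stokes'' formula for $\psi_+$ itself is not justified as stated, since $v^+_{34}$ does not lie in $\mu^{-1}(x)$; however, only its derivative (flux) version is needed. Second, you need not arrange the slices $U_\pm\cap(\{x\}\times\R)$ to be intervals. Once $\psi$ is strictly decreasing on whole lines, injectivity of $\chi_\pm$ follows because $\psi_\pm-\psi$ depends on $x$ alone.
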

	\begin{proof}
		Recall that $\chi_+$ is a local diffeomorphism and $U_+$ is contractible and connected. Then, the tangent map $d\chi_{+}:T_pU_+\rightarrow T_{\chi_+(p)}V_+$ is a linear isomorphism at any point $p\in U_+$, and its determinant is nowhere vanishing on $U_+$.
		In particular, the determinant is either always positive or always negative.
		Observe that the Jacobian of $d\chi_+$ has the form
		\[
		\begin{pmatrix}
			1 & 0 & 0 & \partial_{x_1} \psi_+ \\
			0 & 1 & 0 & \partial_{x_2} \psi_+ \\
			0 & 0 & 1 & \partial_{x_3} \psi_+ \\
			0 & 0 & 0 & \partial_{\rho} \psi_+ 
		\end{pmatrix}
		\]
		and the determinant is simply $\partial_\r \psi_+$.
		The following computation suggests that $\partial_\r \psi_+$ is negative at a specific point and thus negative on the entire $U_+$. The same conclusion holds on $U_-$.
		
		Choose the point $x_0=(1/2,1/2,1/2)$, and let's compute
		$\r\mapsto \psi_+(x_0,\r)$ explicitly as follows.
		Recall the explicit holomorphic disk 
		\[v_{34}^+ =v_{34}^+(R):\mathbb D\rightarrow \C\P^5 \ , \qquad  z\mapsto 
		[Z_{13}: Z_{24}: Z_{12}: Z_{23}: Z_{34}: Z_{14}] = 
		\left[\sqrt{R^2+z}:\sqrt{R^2+z}:1:R:z:R \right]
		\]
		introduced in Section \ref{s_explicit_disks} where we set $\r=2\log R$ and $\mathbb D$ is the unit disk in $\mathbb C$. It represents the class $\a_{34}^+$. 
		
		Notice that the disk $v_{34}^+$ is contained in the affine chart $Z_{12}\neq 0$ where we put $z_{ij}=Z_{ij}/Z_{12}$.
		Then, the Fubini-Study form is given by 
		\[
		\w=\frac{\mathbf i}{2\pi}\partial \bar \partial \log(1+ |z_{13}|^2+|z_{24}|^2+|z_{23}|^2+|z_{34}|^2+|z_{14}|^2) 
		\]
		and $\omega=d\lambda$ where we take the primitive 
		\begin{align*}
			\lambda=&-\frac{\mathbf i}{2\pi} \partial \log (1+ |z_{34}|^2+|z_{13}|^2+ |z_{14}|^2+ |z_{23}|^2+|z_{24}|^2) \\
			=&-\frac{\mathbf i}{2\pi} \frac{\bar z_{34}dz_{34}+\bar z_{13}dz_{13}+\bar z_{14}dz_{14}+\bar z_{23}dz_{23}+\bar z_{24}dz_{24}}{1+ |z_{34}|^2+|z_{13}|^2+ |z_{14}|^2+ |z_{23}|^2+|z_{24}|^2}
		\end{align*}
		
		Since $\rho=2\log R$, the function $\rho\mapsto \psi_+(x_0,\rho)$ can be regarded as a function of $R$, denoted by 
		\[A_+(R)=\int_{\a_{34}^+}\w=\int_{\mathbb D}{v_{34}^+}^\ast \w=\int_{\partial \mathbb D}{v_{34}^+}^\ast \lambda\]
		We can give a parametrization $\gamma$ of the restriction of $v_{34}^+$ on $S^1\cong \partial\mathbb D$ as follows: for $t\in [0,1]$, we set
		\[
		\gamma(t) = (z_{13}(t), z_{24}(t), z_{23}(t), z_{34}(t), z_{14}(t)) = (\sqrt{R^2+ e^{2\pi \mathbf i t}} \ , \ \sqrt{R^2+ e^{2\pi \mathbf i t}} \ , \ R \ , \ e^{2\pi \mathbf i t} \ , \ R ) 
		\]
		Compute
		\begin{align*}
			\dot \gamma(t)=2\pi \mathbf i e^{2\pi\mathbf i t} \left(\frac{1}{2\sqrt{R^2+ e^{2\pi \mathbf i t}}}\partial_{z_{13}}+\frac{1}{2\sqrt{R^2+ e^{2\pi \mathbf i t}}}\partial_{z_{24}}+ \partial_{z_{34}} \right)
		\end{align*}
		Then,
		\begin{align*}
			A_+(R) & = \int_0^1 \lambda (\dot \gamma(t)) dt \\
			&= \int_0^1\frac{\frac{R^2e^{2\pi \mathbf i t}+1}{|R^2+e^{2\pi \mathbf i t}|}+1}{2+2|R^2+e^{2\pi \mathbf i t}|+2R^2}dt=\int_0^1\left(\frac{1}{4}-\frac{R^2-1}{4|R^2+e^{2\pi\mathbf i t}|}\right)dt\label{eq_disk1_area}
		\end{align*}
		This is an elliptic integral. 
		One can compute its derivative with respect to $R$:
		\[{dA_+ \over dR}=-\frac{R}{2}\int_0^1 \frac{(1+R^2)(1+\cos 2\pi \mathbf i t)}{|R^2+e^{2\pi \mathbf i t}|^3}dt <0 \]
		Hence the function $\rho\mapsto \psi_+(x_0,\rho)$ is strictly decreasing in $\rho$.
		
		On the other hand, one can represent the disk $\b_{34,24}^-$ by the map
		$u_{34,24}^-:\mathbb D^2\rightarrow \C\P^5, z\mapsto [\sqrt{R^2+z}:\frac{1+R^2z}{z+R^2}\sqrt{R^2+z}:R:1:Rz:1]$. The disk is contained in the affine chart $Z_{23}\neq 0$, and bounded by the fiber at the point $(x,\r)=(x_0,-2\log R)$. In the same way we define the following disk area function representing $\psi_-$ 
		\[
		A_-(R):=\int_{\b_{34,24}^-}\w=\int_{\partial \mathbb D}{u_{34,24}^-}^\ast\l \]
		Explicit computation yields
		\begin{align}
			A_-(R)=\int_0^1{R^2\frac{(R^2+e^{2\pi \mathbf i t})}{|R^2+e^{2\pi \mathbf i t}|}+R^2\over 2+2R^2+2|R^2+e^{2\pi \mathbf i t}|}dt=\int_0^1\left(\frac{1}{4}+\frac{R^2-1}{4|R^2+e^{2\pi\mathbf i t}|}\right)dt
		\end{align}
		Similarly, one can show that $A_-$ is strictly increasing with respect to $R$.
		Since $\rho=-2\log R$, the function $\r\mapsto \psi_-(x_0,\r)$ is strictly decreasing. 
	\end{proof}

	Now, by \eqref{eq_A_intaff} and \eqref{eq_chi_+}, the transition map 
	$
	\chi_+ \circ \chi_-^{-1}
	$
	is integral affine as it is of the form 
	\[
	(x_1,x_2,x_3,x)\mapsto (x_1,x_2,x_3,x +\min\{0,x_1+x_3-1\} )
	\]
	Therefore, we conclude the following result:
	
	\begin{thm}
		\label{thm_integral_atlas}
		The smooth locus $B_0$ is an integral affine manifold, equipped with an integral affine atlas $\{(U_+,\chi_+), (U_-,\chi_-) \}$.
	\end{thm}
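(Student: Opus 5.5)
The plan is to check the three ingredients of an integral affine atlas: the charts cover $B_0$, each chart is a homeomorphism onto an open subset of $\mathbb R^4$, and the transition maps lie in $GL(4,\mathbb Z)\ltimes\mathbb R^4$ on each connected component of the overlap.

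\textbf{Covering.} By construction $U_+=U_+'\cup\mathcal N_{13}\cup\mathcal N_{24}$ and $U_-=U_-'\cup\mathcal N_{13}\cup\mathcal N_{24}$, where $\mathcal N_{13},\mathcal N_{24}$ are neighborhoods in $B_0$ of the walls $\Theta_{13},\Theta_{24}$. The set $B_0\cap\{\rho=0\}$ is exactly $\Theta_{13}\cup\Theta_{24}$. Hence $U_+\cup U_-=B_0$.

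\textbf{Charts.} Lemma \ref{decreasing_lem} shows that $\chi_\pm$ are local diffeomorphisms whose last coordinate is strictly decreasing in $\rho$ for each fixed $x$. Since the first three coordinates of $\chi_\pm$ are just $\bar x$, the map $\chi_\pm$ is injective. An injective local diffeomorphism is a diffeomorphism onto its image $V_\pm$, which is open. The charts are integral affine in the usual Arnold--Liouville sense: by \cite[Theorem 2.2]{action_angle}, action coordinates are the energies of a $\mathbb Z$-basis of $\mathscr S$. We use that $E(\gamma_i)=x_i$ (Proposition \ref{prop_E(gamma_i)}) and $E(\mathcal H)=1$ is constant. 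Then $\chi_\pm$ is, up to the constant, the energy of the integral basis \eqref{integral_basis_eq} of $\mathscr R(U_\pm)$. Its boundary in $\mathscr S$ is a basis of $\pi_1(L_q)$, because $\mathcal H$ spans the kernel of $\partial$ by the split exact sequence. So $\chi_\pm$ are genuine action-coordinate charts.

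\textbf{Transition maps.} The overlap $U_+\cap U_-=\mathcal N_{13}\sqcup\mathcal N_{24}$ has two components, and relation \eqref{eq_disk_monodromy} must be applied on each one separately.
\begin{itemize}
\item On $\mathcal N_{24}$, we have $\alpha_{34}^+=\beta_{34,24}^-$. So $\psi_+=\psi_-$ and $\chi_+\circ\chi_-^{-1}$ is the identity.
\item On $\mathcal N_{13}$, taking energies gives $\psi_+=\psi_-+x_1+x_3-1$. So $\chi_+\circ\chi_-^{-1}(x_1,x_2,x_3,y)=(x_1,x_2,x_3,y+x_1+x_3-1)$.
\end{itemize}
The second map is the unipotent integer matrix with last row $(1,0,1,1)$ followed by translation by $-1$, so it is integral affine. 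These two formulas combine into the single expression with $\min\{0,x_1+x_3-1\}$, which is only piecewise linear. That is harmless, since integral affineness is required only on each connected component of the overlap. The fact that the two components give different affine maps records the nontrivial monodromy around $\Gamma$.

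\textbf{Main obstacle.} The main point needing care is the chart step. One must justify that the energy of the \emph{relative} classes $\alpha_{34}^+$ and $\beta_{34,24}^-$ varies as a genuine action coordinate, since these are not closed $1$-cycles in $\pi_1(L_q)$. I would handle it by the cylinder-area argument of Figure \ref{fig:cylinder}. For nearby fibers $q,q'$, the difference $E(\beta)(q')-E(\beta)(q)$ equals the $\omega$-area of the cylinder traced by $\partial\beta$. That area is the action-coordinate difference for the class $\partial\beta\in\mathscr S$, and it is independent of the disk cap because $\pi_2$ of a fiber vanishes.
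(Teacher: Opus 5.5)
Your proposal is correct and follows the paper's own argument. Duistermaat's theorem makes $\chi_\pm$ local diffeomorphisms, Lemma~\ref{decreasing_lem} (strict monotonicity of $\psi$ in $\rho$) makes them injective and hence diffeomorphisms onto open $V_\pm$, and taking energies in \eqref{eq_disk_monodromy} gives the transition $(x_1,x_2,x_3,y)\mapsto(x_1,x_2,x_3,y+\min\{0,x_1+x_3-1\})$, which is integral affine on each of the two components $\mathcal N_{13}$, $\mathcal N_{24}$ of the overlap. Your covering check and your boundary-basis and cylinder justification spell out what the paper leaves to \cite{action_angle} and Figure~\ref{fig:cylinder}; the choice-independence of the cylinder area actually comes from the fibers being Lagrangian over the contractible $U_\pm$, not from $\pi_2(L_q)=0$, but this does not affect the argument.
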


	\begin{figure}
		\centering
		
		\tikzset{every picture/.style={line width=0.75pt}} 
		
		\begin{tikzpicture}[x=0.75pt,y=0.75pt,yscale=-1,xscale=1]
			
			\draw    (320.09,140.35) .. controls (349.09,111.35) and (511.09,111.35) .. (540.09,140.35) ;
			\draw    (100.09,140.35) .. controls (129.09,111.35) and (291.09,111.35) .. (320.09,140.35) ;
			\draw [color={rgb, 255:red, 0; green, 0; blue, 200 }  ,draw opacity=1 ]   (100.09,140.35) .. controls (130.09,171.35) and (291.09,170.35) .. (320.09,140.35) ;
			\draw [color={rgb, 255:red, 0; green, 0; blue, 200 }  ,draw opacity=1 ]   (320.09,140.35) .. controls (350.09,171.35) and (511.09,170.35) .. (540.09,140.35) ;
			\draw  [draw opacity=0][fill={rgb, 255:red, 208; green, 2; blue, 27 }  ,fill opacity=1 ] (313.05,140.35) .. controls (313.05,136.46) and (316.2,133.31) .. (320.09,133.31) .. controls (323.98,133.31) and (327.14,136.46) .. (327.14,140.35) .. controls (327.14,144.24) and (323.98,147.4) .. (320.09,147.4) .. controls (316.2,147.4) and (313.05,144.24) .. (313.05,140.35) -- cycle ;
			\draw [color={rgb, 255:red, 0; green, 0; blue, 200 }  ,draw opacity=1 ]   (100.18,2.7) -- (100.09,140.35) ;
			\draw [color={rgb, 255:red, 0; green, 0; blue, 200 }  ,draw opacity=1 ]   (540.18,2.7) -- (540.09,140.35) ;
			\draw [color={rgb, 255:red, 0; green, 0; blue, 0 }  ,draw opacity=1 ]   (100.09,140.35) -- (100,278) ;
			\draw [color={rgb, 255:red, 0; green, 0; blue, 0 }  ,draw opacity=1 ]   (540.09,140.35) -- (540,278) ;
			\draw  [dash pattern={on 0.84pt off 2.51pt}]  (100.09,140.35) -- (540.09,140.35) ;
			\draw    (61.09,91.53) -- (61.09,31.53) ;
			\draw [shift={(61.09,29.53)}, rotate = 90] [color={rgb, 255:red, 0; green, 0; blue, 0 }  ][line width=0.75]    (10.93,-3.29) .. controls (6.95,-1.4) and (3.31,-0.3) .. (0,0) .. controls (3.31,0.3) and (6.95,1.4) .. (10.93,3.29)   ;
			\draw    (472.09,290.44) -- (560.09,290.44) ;
			\draw [shift={(562.09,290.44)}, rotate = 180] [color={rgb, 255:red, 0; green, 0; blue, 0 }  ][line width=0.75]    (10.93,-3.29) .. controls (6.95,-1.4) and (3.31,-0.3) .. (0,0) .. controls (3.31,0.3) and (6.95,1.4) .. (10.93,3.29)   ;
			
			\draw (43,60.4) node [anchor=north west][inner sep=0.75pt]    {$\rho $};
			\draw (190,131.4) node [anchor=north west][inner sep=0.75pt]    {$\mathcal N_{13}$};
			\draw (420,131.4) node [anchor=north west][inner sep=0.75pt]    {$\mathcal N_{24}$};
			\draw (305,46.4) node [anchor=north west][inner sep=0.75pt]    {$U_{+}$};
			\draw (311,216.4) node [anchor=north west][inner sep=0.75pt]    {$U_{-}$};
			\draw (575,280.4) node [anchor=north west][inner sep=0.75pt]    {$\overline{x}$};

		\end{tikzpicture}
		
		\caption{
			\footnotesize A schematic picture for the base. The $\bar x$ direction is compressed: it is more precisely the octahedron as in Figure \ref{fig:wall}, and now abbreviated by distance to the blue region. The dashed line denotes the "wall", and the red dot denotes the singular locus, which we understand to be the blue region in the wall.}
		\label{fig:schematic base}
	\end{figure}

	\section{B-side}

	\subsection{Family Floer mirror construction} \label{s_family_floer_mirror}
	
	Here we carry out the constructions of the mirror space and the mirror torus fibration explicitly, without providing detailed explanations of the motivations behind them. 
	There are indeed deep Floer-theoretic reasons for these constructions, but the procedures themselves are explicit and transparent, requiring no prior knowledge of Floer theory.
	Thus, while we use the term ``family Floer mirror,'' one should for the moment regard our mirror construction as independent of any Floer-theoretic machinery.

	\subsubsection{Flat unitary local systems}\label{sssec_ydef}
	Let
	$
	\Lambda=\mathbb C((T^{\mathbb R}))
	$
	denote the \textit{Novikov field} consisting of formal power series
	\[
	x = \sum_{i=0}^\infty a_i T^{\lambda_i}
	\]
	with $a_i\in\mathbb C$ and $\lambda_i \nearrow \infty$.
	Here $T$ is a formal symbol.
	It admits a non-archimedean valuation map $\val(x)= \min\{ \lambda_i \mid a_i\neq 0\}$ and an induced norm $|x|=e^{-\val(x)}$. The unit circle in $\Lambda$ is denoted by 
	\[
	U_\Lambda:=\{ x\in\Lambda\mid |x|=1 \}
	\]
	
	Observe that the identification 
	\[
	H^1(L_q; U_\Lambda) \cong \mathrm{Hom}(\pi_1(L_q), U_\Lambda)
	\]
	gives a natural pairing, written as
	\[
	H^1(L_q; U_\Lambda) \times \pi_1(L_q) \to U_\Lambda \quad , \quad (\mathbf y, \sigma)\mapsto \mathbf y^{\sigma}
	\]
	
	Recall that the tropicalization map 
	\[\trop: (\Lambda^*)^4\to\mathbb R^4
	\]
	is a continuous map with respect to the analytic topology on $(\Lambda^*)^4$ and the Euclidean topology on $\mathbb R^4$; see Section \ref{s_trop_affinoid_torus_fib}.
	For each open subset $V\subset \mathbb R^4$, we have an analytic open domain $\trop^{-1}(V)\subset (\Lambda^*)^4$.

	With the aforementioned integral affine atlas $\{(U_+,\chi_+), (U_-,\chi_-)\}$ around (\ref{eq_chi_+}), we consider a bijective map $\tilde \chi_+$ over $\chi_+$
	\[
	\xymatrix{\displaystyle \bigcup_{q\in U_+} H^1(L_q; U_\Lambda) \ar[d] \ar[rr]^{\tilde \chi_+} & &  \trop^{-1}(V_+) \ar[d] \\
		U_+ \ar[rr]^{\chi_+} & & V_+ 
	}
	\]
	defined by
	\[
	(q,\mathbf y)  \  \mapsto  \  (y^+_1,y^+_2,y^+_3, y^+) = \left( T^{E(\gamma_1)(q)} \mathbf y^{\partial \gamma_1(q)}  \ , \ T^{E(\gamma_2)(q)} \mathbf y^{\partial \gamma_2(q)}
	\ , \ 
	T^{E(\gamma_3)(q)} \mathbf y^{\partial \gamma_3(q)} \ , \ T^{E(\alpha^+_{34})(q)} \mathbf y^{\partial \alpha^+_{34}(q)}
	\right)
	\]
	where $q\in U_+$, $\mathbf y\in H^1(L_q; U_\Lambda)$, and we write $y_1,y_2,y_3, y_+$ for coordinate functions in $\trop^{-1}(V_+)\subseteq (\Lambda^*)^4$. Similarly, for $\chi_-$, we can define
	\[
	\bigcup_{q\in U_-} H^1(L_q; U_\Lambda) \xlongrightarrow{\tilde \chi_-} \trop^{-1}(V_-)
	\]
	over $\chi_-$ through the analogous formula
	\[
	(y_1^-,y_2^-,y_3^-,y^-) = \left( T^{E(\gamma_1)(q)} \mathbf y^{\partial \gamma_1(q)}  \ , \ T^{E(\gamma_2)(q)} \mathbf y^{\partial \gamma_2(q)}
	\ , \ 
	T^{E(\gamma_3)(q)} \mathbf y^{\partial \gamma_3(q)} \ , \ T^{E(\beta^-_{34,24})(q)} \mathbf y^{\partial \beta^-_{34,24} (q)}
	\right)
	\]
	
	In the above situation, if we have an analytic isomorphism $\varphi$ over $\chi_-\circ \chi_+^{-1}$ as follows
	\[
	\xymatrix{
		\trop^{-1}( \chi_+(U_+\cap U_-) ) \ar[rr]^\varphi \ar[d] & &  \trop^{-1}(\chi_-(U_+\cap U_-)) \ar[d] \\
		\chi_+(U_+\cap U_-)  \ar[rr]^{\chi_-\circ \chi_+^{-1}} & & \chi_-(U_+\cap U_-)
	}
	\]
	\begin{figure}
		\centering

		\tikzset{every picture/.style={line width=0.75pt}} 
		
		\begin{tikzpicture}[x=0.75pt,y=0.75pt,yscale=-1,xscale=1]
			
			\draw  [color={rgb, 255:red, 0; green, 0; blue, 0 }  ,draw opacity=0.41 ][fill={rgb, 255:red, 243; green, 215; blue, 215 }  ,fill opacity=0.42 ] (60.31,112.75) .. controls (60.31,80.86) and (86.17,55) .. (118.06,55) .. controls (149.96,55) and (175.81,80.86) .. (175.81,112.75) .. controls (175.81,144.64) and (149.96,170.5) .. (118.06,170.5) .. controls (86.17,170.5) and (60.31,144.64) .. (60.31,112.75) -- cycle ;
			\draw  [color={rgb, 255:red, 0; green, 0; blue, 0 }  ,draw opacity=0.46 ][fill={rgb, 255:red, 172; green, 189; blue, 223 }  ,fill opacity=0.42 ] (60.31,191.75) .. controls (60.31,159.86) and (86.17,134) .. (118.06,134) .. controls (149.96,134) and (175.81,159.86) .. (175.81,191.75) .. controls (175.81,223.64) and (149.96,249.5) .. (118.06,249.5) .. controls (86.17,249.5) and (60.31,223.64) .. (60.31,191.75) -- cycle ;
			\draw    (185,94) .. controls (207.47,68.08) and (264.08,25.97) .. (303.04,40.96) ;
			\draw [shift={(304.81,41.69)}, rotate = 203.55] [color={rgb, 255:red, 0; green, 0; blue, 0 }  ][line width=0.75]    (10.93,-3.29) .. controls (6.95,-1.4) and (3.31,-0.3) .. (0,0) .. controls (3.31,0.3) and (6.95,1.4) .. (10.93,3.29)   ;
			\draw    (189,206) .. controls (215.41,231.3) and (261.41,264.67) .. (301.96,254.2) ;
			\draw [shift={(303.81,253.69)}, rotate = 163.69] [color={rgb, 255:red, 0; green, 0; blue, 0 }  ][line width=0.75]    (10.93,-3.29) .. controls (6.95,-1.4) and (3.31,-0.3) .. (0,0) .. controls (3.31,0.3) and (6.95,1.4) .. (10.93,3.29)   ;
			\draw  [dash pattern={on 0.84pt off 2.51pt}] (296,2) -- (432.81,2) -- (432.81,124.69) -- (296,124.69) -- cycle ;
			\draw  [dash pattern={on 0.84pt off 2.51pt}] (296,164) -- (432.81,164) -- (432.81,286.69) -- (296,286.69) -- cycle ;
			\draw  [fill={rgb, 255:red, 243; green, 215; blue, 215 }  ,fill opacity=0.45 ] (318,37.14) .. controls (318,27.12) and (326.12,19) .. (336.14,19) -- (393.68,19) .. controls (403.69,19) and (411.81,27.12) .. (411.81,37.14) -- (411.81,91.55) .. controls (411.81,101.57) and (403.69,109.69) .. (393.68,109.69) -- (336.14,109.69) .. controls (326.12,109.69) and (318,101.57) .. (318,91.55) -- cycle ;
			\draw  [fill={rgb, 255:red, 172; green, 189; blue, 223 }  ,fill opacity=0.39 ] (317.5,198.14) .. controls (317.5,188.12) and (325.62,180) .. (335.64,180) -- (393.18,180) .. controls (403.19,180) and (411.31,188.12) .. (411.31,198.14) -- (411.31,252.55) .. controls (411.31,262.57) and (403.19,270.69) .. (393.18,270.69) -- (335.64,270.69) .. controls (325.62,270.69) and (317.5,262.57) .. (317.5,252.55) -- cycle ;
			\draw    (320,188) .. controls (352.81,206.69) and (373.81,207.69) .. (407.81,188.69) ;
			\draw    (321,100.55) .. controls (351.81,81.69) and (378.81,82.69) .. (408.81,101.24) ;
			\draw    (356,108) .. controls (379.22,119.4) and (382.65,154.85) .. (372.61,177.94) ;
			\draw [shift={(371.81,179.69)}, rotate = 295.56] [color={rgb, 255:red, 0; green, 0; blue, 0 }  ][line width=0.75]    (10.93,-3.29) .. controls (6.95,-1.4) and (3.31,-0.3) .. (0,0) .. controls (3.31,0.3) and (6.95,1.4) .. (10.93,3.29)   ;
			
			\draw (105,200) node [anchor=north west][inner sep=0.75pt]   [align=left] {$\displaystyle U_{-}$ };
			\draw (104,89) node [anchor=north west][inner sep=0.75pt]   [align=left] {$\displaystyle U_{+}$ };
			\draw (123.41,158.34) node  [font=\footnotesize] [align=left] {\begin{minipage}[lt]{49.51pt}\setlength\topsep{0pt}
					$\displaystyle U_{+} \cap \ U_{-}$
			\end{minipage}};
			\draw (224,70) node [anchor=north west][inner sep=0.75pt]   [align=left] {$\displaystyle \chi _{+}$};
			\draw (227,208) node [anchor=north west][inner sep=0.75pt]   [align=left] {$\displaystyle \chi _{-}$};
			\draw (339,30) node [anchor=north west][inner sep=0.75pt]   [align=left] {$\displaystyle \chi _{+}( U_{+})$};
			\draw (336,236) node [anchor=north west][inner sep=0.75pt]   [align=left] {$\displaystyle \chi _{-}( U_{-})$};
			\draw  [draw opacity=0]  (363.5, 101) circle [x radius= 56, y radius= 13.35]   ;
			\draw (325.08,93.57) node [anchor=north west][inner sep=0.75pt]  [font=\footnotesize,rotate=-0.64] [align=left] {$\displaystyle \chi _{+}( U_{+} \cap U_{-})$};
			\draw (328,180) node [anchor=north west][inner sep=0.75pt]  [font=\footnotesize] [align=left] {$\displaystyle \chi _{-}( U_{+} \cap U_{-})$};
			\draw (380,134) node [anchor=north west][inner sep=0.75pt]  [font=\small] [align=left] {$\displaystyle \chi _{-} \circ \chi _{+}^{-1}$};
			\draw (409,4) node [anchor=north west][inner sep=0.75pt]   [align=left] {$\displaystyle \mathbb{R}^{4}$};
			\draw (409,265) node [anchor=north west][inner sep=0.75pt]   [align=left] {$\displaystyle \mathbb{R}^{4}$};

		\end{tikzpicture}
		
		\label{fig:chi_pm_transition}
	\end{figure}

	then we can equip the set $\bigcup_{q\in B_0} H^1(L_q; U_\Lambda)$ with an analytic space structure given by
	\[
	\trop^{-1}(V_+) \cup \trop^{-1}(V_-) / \!\sim_\varphi
	\]
	where $\sim_\varphi$ denotes the gluing along $\varphi$.
	Note that $\chi_+(U_+\cap U_-)$ and $\chi_-(U_+\cap U_-)$ are open subsets of $V_+$ and $V_-$ respectively.
	
	In general, such a $\varphi$ need not exist, and even when it does, it need not be unique.
	However, the family Floer SYZ mirror construction developed in \cite{Yuan_I_FamilyFloer} offers a canonical algorithm of such a $\varphi$ that roughly captures the Floer theory of Lagrangian fibers $L_q$'s.
	Let's discuss this in the following section.
	
	\subsubsection{Quantum-corrected analytic gluing formula}
	
	
	According to our mirror construction algorithm, we need to determine which holomorphic disks exist in a given topological class in $\pi_2(X,L_q)$ of Maslov index $2$, or equivalently, which topologically defined disks are actually realized by holomorphic disks bounded by the fibers. 
	Identifying such holomorphic disks may be difficult for general fibers, but it is feasible for certain fixed ones. 
	The argument of Lagrangian isotopy then allows one to transport these holomorphic disks to other fibers, provided no Maslov index zero disks appear. 
	We remark that the critical values of the B-side mirror superpotential are expected to be eigenvalues of quantum multiplication by the first Chern class $c_1$ on the A-side, regardless of whether Maslov-zero quantum corrections are present; see \cite{Yuan_c_1}.

	Nevertheless, to keep the discussion straightforward, the following construction is presented in a manner entirely independent of any Floer-theoretic machinery.
	
	\begin{construction}
		\label{construction_W}
		Given a Lagrangian fiber $L_q$, we define a set-theoretic function
		$
		H^1(L_q; U_\Lambda) \to \Lambda
		$
		\[
		W(\mathbf y)=\sum_{\beta} T^{E(\beta)} \mathbf y^{\partial\beta}
		\]
		where $\beta$ runs through all elements in $\pi_2(\mathbb X, L_q)$ 
		such that $\mu(\beta)=2$ and $\beta$ can be represented by a holomorphic disk $u:(\mathbb D,\partial\mathbb D)\to (\mathbb X, L_q)$ with $\bar\partial u=0$.
		Note that here $\mu(\beta)$ is the Maslov index of $\beta\in \pi_2(X, L_q)$, which is equal to $2\beta\cdot D_{ac}$ by \cite[Lemma 3.1]{AuTDual}.
	\end{construction}
	
	In Section \ref{s_local_system_topo},
	we have already constructed six holomorphic disks in $U_\pm^{\prime}$ bounded by the specific Lagrangian fibers over the two points
	$q_+=(x_0, 2\log R) \in U_+$ and $q_-=(x_0, -2\log R)\in U_-$;
	see (\ref{alpha_beta_+_eq}) and (\ref{alpha_beta_-_eq}).

	Applying the formula in Construction \ref{construction_W} yields that for $\mathbf y\in H^1(L_{q_+}; U_\Lambda)$,
	\begin{align*}
		W(\mathbf y)= T^{E(\alpha^+_{12})} \mathbf y^{\partial \alpha^+_{12}} 
		+  T^{E(\alpha^+_{34})} \mathbf y^{\partial \alpha^+_{34}}
		+  T^{E(\beta^+_{23,13})} \mathbf y^{\partial \beta^+_{23,13} }
		+  T^{E(\beta^+_{23,24})} \mathbf y^{\partial \beta^+_{23,24} }
		+  T^{E(\beta^+_{14,13})} \mathbf y^{\partial \beta^+_{14,13} }
		+  T^{E(\beta^+_{14,24})} \mathbf y^{\partial \beta^+_{14,24} }
	\end{align*}
	Further using the relation \eqref{eq_alpha_to_gamma_H} and the aforementioned identification maps $\tilde \chi_+$, we produce an analytic map
	\[
	W_+: \trop^{-1}(V_+) \to \Lambda
	\]
	given by
	\[
	W_+=\frac{y^+_1y^+_2y^+}{T}+\frac{y^+_3}{y^+}+\frac{T}{y^+_1y^+}+y^++\frac{T}{y^+_2y^+}+\frac{T^{2}}{y^+_1y^+_2y^+_3y^+}
	\]
	Similarly, we obtain an analytic map 
	$$W_-:\trop^{-1}(V_-)\to \L$$
	given by 
	\[ 
	W_-=\frac{(y^-_1)^2y_2^-y_3^-y^-}{T^2}+\frac{y^-_1y^-_2y^-}{T}+\frac{T}{y^-_1y^-}+\frac{y_1^-y_3^-y^-}{T}+y^-+\frac{T^2}{y^-y_1^-y_2^-y_3^-}
	\]

	For convenience we also introduce: 
	\begin{align*}
		&\begin{cases}
			W_+^{12}&=T^{-1}y^+_1y^+_2y^+\\
			W^{23}_+&=\frac{y^+_3}{y^+}+\frac{T}{y^+_1y^+}\\
			W^{34}_+&=y^+\\
			W^{41}_+&=\frac{T}{y^+_2y^+}+\frac{T^{2}}{y^+_1y^+_2y^+_3y^+}
		\end{cases}\\
		&\begin{cases}
			W^{12}_-&=T^{-2}(y^-_1)^2y_2^-y_3^-y^-+T^{-1}y^-_1y^-_2y^-\\
			W^{23}_-&=\frac{T}{y^-_1y^-}\\
			W^{34}_-&=T^{-1}y_1^-y_3^-y^-+y^-\\
			W^{41}_-&=\frac{T^2}{y^-y_1^-y_2^-y_3^-}
		\end{cases}
	\end{align*}
	collecting those terms that correspond to the disks with nontrivial intersections with $D_{12}, D_{23}, D_{34}, D_{41}$ respectively.
	For instance, $W^{23}_+$ contains monomials that correspond to holomorphic disks that intersect only with divisors $D_{23}$, and so on.
	
	\subsubsection{Gluing of two local charts}
	
	Now, we aim to construct the mirror analytic space
	\begin{equation}
		\label{glue_two_charts_eq}
		X^\vee_0 := \trop^{-1}(V_+) \cup \trop^{-1}(V_-) /\sim_\varphi
	\end{equation}
	by gluing the previous two local charts.
	Here the gluing map 
	\[
	\trop^{-1}(V_+) \supseteq 
	\trop^{-1}( \chi_+(U_+\cap U_-) )  \xlongrightarrow{\varphi}  \trop^{-1}(\chi_-(U_+\cap U_-)) 
	\subseteq \trop^{-1}(V_-)
	\]
	is to be determined.
	We require that
	\[
	\varphi^\ast y^-_i=y^+_i  \ , \quad \varphi^\ast W_-  = W_+.
	\]
	Thus $y_i^+$ and $y_i^-$ give rise to a globally defined invertible analytic function on $X_0^\vee$, which we denote by 
	\begin{equation}
		\label{eq_y_i}
		y_i: X_0^\vee \to \Lambda
	\end{equation}
	We also abbreviate $(y_1,y_2,y_3)$ as $\bar y$.
	Here the first condition reflects the $T^3$-symmetry of the Lagrangian torus fibers, while the second reflects the general principle that the analytic gluing maps in the family Floer mirror construction must preserve the potential functions.
	
	Accordingly, we have
	\begin{equation}
		\label{phi_on_y_+}
		\varphi^\ast y^-=\frac{y^+}{1+T^{-1}y_1y_3}
	\end{equation}
	In this setting, $W_+$ and $W_-$ glue to an analytic function 
	\[
	W: X_0^\vee \to \Lambda
	\]
	Moreover, one actually sees that each pair $W_+^{i,i+1}$ and $W_-^{i,i+1}$ can also be glued in the sense that $\varphi^\ast W^{i,i+1}_-=W^{i,i+1}_+$, obtaining analytic functions 
	\[
	W^{i,i+1}: X_0^\vee \to \Lambda
	\]
	Observe that
	\begin{equation}
		\label{W_W_ii+1_eq}
		W=W^{12}+W^{23}+W^{34}+W^{41}
	\end{equation}

	


	\subsection{Explicit description of the mirror space and fibration} \label{s_explicit_mirror_fibration}

	A main purpose of this paper is to propose that, when the B-side mirror is formulated over the Novikov field, one can realize the above mirror correspondence precisely and explicitly within the SYZ framework.
	Indeed, the mirror of $\mathbb X=Gr(2,4)$ is expected to be itself (removing the anti-canonical divisor), denoted by $$Y= Gr(2,4) \setminus D_{ac},$$ which is further equipped with a Landau-Ginzburg superpotential $W$ on $Y$.
	
	So far, we have a mirror analytic space $X_0^\vee=\trop^{-1}(V_+)\cup \trop^{-1}(V_-)/\sim_\varphi$ as in \eqref{glue_two_charts_eq} and a mirror affinoid torus fibration $\pi_0^\vee: X_0^\vee\to B_0$ obtained by gluing two trivial fibrations $\trop^{-1}(V_\pm)\to V_\pm$ and identifying $V_\pm$ with $U_\pm$, respectively, through $\chi_{\pm}$.
	We also remark that despite the underlying Floer-theoretic foundation, the gluing map $\varphi$ has an explicit form \eqref{phi_on_y_+}.

	The goal of this section is to make the abstract mirror space and fibration into a more explicit form. Specifically, we aim to build a commutative diagram
	\begin{align}
		\xymatrix{
			X_0^\vee\ar[rr]^{g} \ar[d]^{\pi_0^\vee} & & Y \ar[d]^{f}\\
			B_0\ar[rr]^{j} & & \R^5
		} \label{commd} 
	\end{align}
	where
	
	$\bullet$ $g$ is a morphism of analytic spaces over the Novikov field.
	
	$\bullet$ $j:B\to \mathbb R^5$ is a topological homeomorphism onto its image which in some sense unfolds the integral affine action coordinates in $B_0$.
	
	$\bullet$ $f$ is a continuous map with respect to the non-archimedean analytic topology and the Euclidean topology in the base.
	
	We will construct them as follows.

	\subsubsection{The analytic map $g$}

	Note that part of our mirror space candidate is obtained by gluing two charts $\trop^{-1}(V_\pm) \subset (\Lambda^*)^4$. 
	It is reminiscent of the known fact that $Gr(2,4)$, over any field $\Bbbk$, admits two cluster charts, each isomorphic to $(\Bbbk^*)^4$. 
	This observation suggests that the glued space 
	\[
	X_0^\vee = \trop^{-1}(V_+) \cup \trop^{-1}(V_-) / \!\sim_\varphi
	\]
	may embed into $Gr(2,4)$ in an appropriate way so that $\trop^{-1}(V_+)$ is embedded into one cluster chart and $\trop^{-1}(V_-)$ is embedded into another.


	Let
	$$[p_{ij}]=[p_{13}:p_{24}:p_{12}:p_{23}:p_{34}:p_{41}]$$
	denote Pl\"ucker coordinates on the Grassmannian $Gr(2,4)$ over $\L$. 
	We then construct the analytic maps
	\begin{align*}
		g_+: \trop^{-1}(V_+) & \to Gr(2,4)_{\Lambda} \qquad 
		(y^+_1,y^+_2,y^+_3,y^+) &&\mapsto \left[\left(\frac{y^+_1y^+_3+T}{y^+}\right):y^+_1y^+_2y^+:y^+_1:y^+_1y^+_2:y^+_1y^+_2y^+_3:T\right] \\
		g_-: \trop^{-1}(V_-) & \to Gr(2,4)_{\Lambda} \qquad
		(y^-_1,y^-_2,y^-_3,y^-) && \mapsto \left[\frac{T}{y^-}:y^-_1y^-_2y^-(T^{-1}y^-_1y^-_3+1):y^-_1:y^-_1y^-_2:y^-_1y^-_2y^-_3:T\right]
	\end{align*}    
	The images of $g_\pm$ satisfy the relation
	\[
	p_{13}p_{24} = p_{12}p_{34} + p_{14}p_{23}
	\]
	Due to (\ref{phi_on_y_+}), we see that
	\[
	g_-=g_+\circ \varphi
	\] 
	on the overlap $\trop^{-1}(V_+\cap V_-)$. 
	Thus, gluing them obtains a well-defined analytic map
	\[
	g: X_0^\vee \to Y
	\]
	
	\begin{lem}
		$g$ is injective.
	\end{lem}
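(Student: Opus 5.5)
The plan is to prove injectivity chart by chart, and then to handle pairs of points lying in different charts using the monotonicity of $\psi$ from Lemma \ref{decreasing_lem}.

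\emph{Step 1: $g_\pm$ are injective.} I will read off inverse formulas from the Plücker coordinates. Normalize a point in the image so that $p_{41}=T$; this is legitimate because $p_{41}\neq 0$ on $Y$. Then $g_+$ is inverted by
$$y_1=p_{12},\qquad y_2=p_{23}/p_{12},\qquad y_3=p_{34}/p_{23},\qquad y^+=p_{24}/p_{23}.$$
Likewise $g_-$ is inverted by the same formulas for $y_1,y_2,y_3$, together with $y^-=T/p_{13}$. Hence each $g_\pm$ is injective on $\trop^{-1}(V_\pm)$.

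\emph{Step 2: points in different charts.} Suppose $g_+(a)=g_-(b)$ with $a\in\trop^{-1}(V_+)$ and $b\in\trop^{-1}(V_-)$. Comparing $p_{12},p_{23},p_{34}$ (after normalizing $p_{41}=T$) gives $y_i(a)=y_i(b)$ for $i=1,2,3$. Comparing $p_{13}$ gives
$$\frac{T}{y^-(b)}=\frac{y_1y_3+T}{y^+(a)},\qquad\text{so}\qquad y^-(b)=\frac{y^+(a)}{1+T^{-1}y_1y_3}.$$
This is exactly the formula \eqref{phi_on_y_+}. It therefore remains only to show that $\trop(a)\in\chi_+(U_+\cap U_-)$; then $b=\varphi(a)$, so $a$ and $b$ are the same point of $X_0^\vee$.

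\emph{Step 3: the main obstacle, locating the base point.} Let $q_+=\chi_+^{-1}(\trop a)\in U_+$ and $q_-=\chi_-^{-1}(\trop b)\in U_-$. Both points have the same $\bar x=(x_1,x_2,x_3)$, since $\val(y_i)$ agree. Set $v=\val(1+T^{-1}y_1y_3)$. Then $v\geqslant \min\{0,x_1+x_3-1\}$, with equality unless $x_1+x_3=1$, where cancellation may occur. By \eqref{eq_psi_defn} we have
$$\psi(q_+)=\val y^+(a),\qquad \psi(q_-)=\val y^-(b)+\min\{0,x_1+x_3-1\}=\psi(q_+)-v+\min\{0,x_1+x_3-1\}.$$
Hence $\psi(q_-)\leqslant\psi(q_+)$.

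Since $\rho\mapsto\psi(\bar x,\rho)$ is strictly decreasing by Lemma \ref{decreasing_lem}, the map $(\bar x,\rho)\mapsto(\bar x,\psi)$ is injective on $B$. I will split into two cases.

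If $x_1+x_3\neq 1$, then $v=\min\{0,x_1+x_3-1\}$, so equality holds and $q_+=q_-\in U_+\cap U_-$, as required.

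If $x_1+x_3=1$, I will show this case cannot occur. For $\mathcal N_{13},\mathcal N_{24}$ chosen small enough, the fiber of $U_+$ (respectively $U_-$) over such $\bar x$ is contained in $\{\rho>0\}$ (respectively $\{\rho<0\}$), because $\Gamma$ has been removed. Thus $\rho(q_-)<\rho(q_+)$, which by monotonicity gives $\psi(q_-)>\psi(q_+)$. This contradicts the inequality $\psi(q_-)\leqslant\psi(q_+)$ established above.

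The delicate point is this last case. I expect to need to check that the thickenings $\mathcal N_{13},\mathcal N_{24}$ can be chosen to avoid the hyperplane $x_1+x_3=1$, or more generally to control $v$ near that hyperplane. If such a choice is not available, the fallback is to shrink $\mathcal N_{13}$ and $\mathcal N_{24}$ so that the inequality still produces a contradiction there.
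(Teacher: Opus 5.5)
Your proof is correct and follows essentially the paper's route. Both arguments use injectivity of $g_\pm$, the relation $y^-=y^+/(1+T^{-1}y_1y_3)$, the non-archimedean bound $\val(1+T^{-1}y_1y_3)\geqslant\min\{0,x_1+x_3-1\}$, and strict monotonicity of $\psi$ in $\rho$; your handling of the case $x_1+x_3=1$, ruled out because $\rho(q_-)<0<\rho(q_+)$, is if anything cleaner than the paper's final step.

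The condition you flag as delicate needs no fallback, since the construction already forces it. The transition formula \eqref{eq_A_intaff}, equivalently $\varphi$ covering $\chi_-\circ\chi_+^{-1}$, requires $\mathcal N_{13}\subset\{x_1+x_3<1\}$ and $\mathcal N_{24}\subset\{x_1+x_3>1\}$. That, rather than the mere removal of $\Gamma$, is why the fibers of $U_\pm$ over $\Pi$ lie in $\{\pm\rho>0\}$.
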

	
	\begin{proof}
		Suppose $\mathbf x$ and $\mathbf x'$ are points in $X_0^\vee =\trop^{-1}(V_+)\cup \trop^{-1}(V_-)/\sim_\varphi$ so that $g(\mathbf x)=g(\mathbf x')$.
		We aim to show that $\mathbf x=\mathbf x'$ in $X_0^\vee$.
		If both $\mathbf x$ and $\mathbf x'$ are contained in the image of $\trop^{-1}(V_+)\xhookrightarrow{} X_0^\vee$, then it follows from the fact that $g_+$ is injective.
		Similarly, if both $\mathbf x$ and $\mathbf x'$ are contained in the image of $\trop^{-1}(V_-)\xhookrightarrow{} X_0^\vee$, then it holds as well.
		
		Now, we may assume that $\mathbf x$ is contained in the image of $ \trop^{-1}(V_+ ) \xhookrightarrow{} X_0^\vee$ and that $\mathbf x'$ is contained in the image of $ \trop^{-1}(V_-) \xhookrightarrow{} X_0^\vee$.
		Let $\mathbf y=(y^+_1,y^+_2,y^+_3, y^+)\in\trop^{-1}(V_+)$ and $\mathbf y'=(y_1^-, y_2^-,y_3^-,y^-)\in \trop^{-1}(V_-)$ be their corresponding points respectively.
		Our goal is to show that \(\trop(\mathbf y)\) lies in $\chi_+(U_+\cap U_-)$, that \(\trop(\mathbf y')\) lies in $\chi_-(U_+\cap U_-)$, and that $\varphi(\mathbf y)=\mathbf y'$.

		First, the condition $g(\mathbf x)=g(\mathbf x')$ implies that $g_+(\mathbf y)=g_-(\mathbf y')$.
		Namely, 
		\[\left[\left(\frac{y^+_1y^+_3+T}{y^+}\right):y^+_1y^+_2y^+:y^+_1:y^+_1y^+_2:y^+_1y^+_2y^+_3:T\right] =\left[\frac{T}{y^-}:y^-_1y^-_2y^-(T^{-1}y^-_1y^-_3+1):y^-_1:y^-_1y^-_2:y^-_1y^-_2y^-_3:T\right]
		\] 
		It follows that $y_i^+=y_i^-=:y_i\in\Lambda$ and $y^+=y^- (T^{-1} y_1y_3+1)$.
		Write
		$\trop(\mathbf y)=(x^+_1,x^+_2,x^+_3,x^+)$ and $\trop(\mathbf y')=(x^-_1,x^-_2,x^-_3,x^-)$.
		Then, we must have $x_i^+=x_i^-=:x_i\in\mathbb R$ and
		\begin{equation}
			\label{eq_x+-}
			x^+=x^- + \val(T^{-1}y_1y_3+1) \ge x^- + \min\{x_1+x_3-1, 0\}
		\end{equation}
		Here we use the non-archimedean triangle inequality, so the strict inequality holds only if $x_1+x_3-1=0$.
		Consider the two points $q_+=\chi^{-1}_+(\trop (\mathbf y))\in U_+$ and $q_-=\chi_-^{-1}(\trop(\mathbf y'))\in U_-$. 
		By the definitions of $\chi_\pm$ at \eqref{eq_chi_+}, we may write
		$q_+=(x_1,x_2,x_3,\rho_+)$ and $q_-=(x_1,x_2,x_3,\rho_-)$ for some numbers $\rho_\pm$, where $x^+=\psi_+(x_1,x_2,x_3, \rho_+)$ and $x^-=\psi_-(x_1,x_2,x_3,\rho_-)$.
		By \eqref{eq_psi_defn} and \eqref{eq_x+-}, we must have 
		\begin{equation}
			\label{eq_psi_rho+-}
			\psi(x_1,x_2,x_3,\rho_+) \ge \psi(x_1,x_2,x_3,\rho_-). 
		\end{equation}

		It remains to show $\rho_+=\rho_-$.
		First, using Lemma \ref{lem_monotonic} implies $\rho_+\le \rho_-$.
		If $\rho_+ < \rho_-$, then using Lemma \ref{lem_monotonic} again implies strict inequalities $\psi(x_1,x_2,x_3,\rho_+) > \psi(x_1,x_2,x_3,\rho_-)$ and
		$x^+=x^- + \val(T^{-1}y_1y_3+1) > x^- + \min\{x_1+x_3-1, 0\}$.
		By the property of non-archimedean triangle inequality, we must have $x_1+x_3-1=0$, so $(x_1,x_2,x_3)\in \Pi$ (see \S \ref{s_lag_fib}).
		It follows that $\rho_+\neq 0$ and $\rho_-\neq 0$.
		So, either $0<\rho_+<\rho_-$ or $\rho_+<\rho_-<0$.
		This implies that $\mathbf x$ and $\mathbf x'$ lie in the same chart $\trop^{-1}(V_+)$ or $\trop^{-1}(V_-)$, which was discussed at the beginning of the proof.
	\end{proof}

	Since the dimensions of $X_0^\vee$ and $Y$ are the same, we see that $g$ is an analytic embedding.
	Moreover, one can check that the global analytic functions $W^{i,i+1}$ in \eqref{W_W_ii+1_eq} should correspond to the following global functions on $Y=Gr(2,4)\setminus D_{ac}$:
	\[
	\widetilde W^{12}=\frac{p_{24}}{p_{41}}   \quad , \quad 
	\widetilde W^{23}=\frac{p_{13}}{p_{12}}  \quad , \quad 
	\widetilde W^{34}=\frac{p_{24}}{p_{23}}   \quad , \quad 
	\widetilde W^{41}=T\frac{p_{13}}{p_{34}}  
	\]
	in the sense that $\widetilde W^{i,i+1}\circ g=W^{i,i+1}$, i.e.
	the following diagrams commute
	\[
	\xymatrix{
		X_0^\vee \ar[rr]^g \ar[drr]_{W^{i,i+1}} &  & Y \ar[d]^{\widetilde W^{i,i+1}} \\
		& & \Lambda
	}
	\]

	\smallskip

	This gives the correct Marsh-Rietsch mirror superpotential:
	\begin{equation}
		\label{eq_tilde_W}
		\widetilde W=\frac{p_{24}}{p_{41}}+\frac{p_{13}}{p_{12}}+\frac{p_{24}}{p_{23}}+T\frac{p_{13}}{p_{34}}.
	\end{equation}

	Moreover, the global analytic functions $y_i$ on $X_0^\vee$ (see \eqref{eq_y_i}) correspond to the following global functions on $Y$, still denoted by:
	\[
	y_1  = T\frac{p_{12}}{p_{41}}  \quad , \quad
	y_2 =\frac{p_{23}}{p_{12}} \quad , \quad 
	y_3 =\frac{p_{34}}{p_{23}}
	\]

	\subsubsection{The topological homeomorphism $j$}\label{sec_jmap}
	
	To make the construction more transparent, it is convenient to embed the base space $B$ into a Euclidean space.
	We write $\psi_0(x)=\psi(x,0)$ over $B$. Then define a continuous embedding 
	\begin{equation}
		\label{eq_j}
		j:B\rightarrow \R^5,\quad (x,\r)\mapsto (\theta_0(x,\r),\theta_1(x,\r),\bar x)
	\end{equation}
	where $x=(x_1,x_2,x_3,x_4)$ denotes a point in $\Delta_{2,4}$ and $\bar x=(x_1,x_2,x_3)$ gives a coordinate chart for $B$.
	\begin{align}
		\theta_0( x,\rho)&:=\min\{-\psi( x,\rho),-\psi_0( x)\}+\min\{0,x_1+x_3-1\}\\
		\theta_1( x,\rho)&:=\min\{\psi( x,\rho),\psi_0( x)\}
	\end{align}
	
	To understand the image of $j$ we look at the image of the path \[
	j_x: \ \rho\mapsto  \big( \theta_0(x,\rho), \theta_1(x,\rho) \big)
	\]
	in $\mathbb R^2$ for each fixed $x$.
	Recall that by Lemma \ref{lem_monotonic}, $\psi(x,\rho)$ is decreasing in $\rho$.
	Thus, $\psi(x,\rho)<\psi_0(x)$ for $\rho>0$ and $\psi(x,\rho)>\psi_0(x)$ for $\rho<0$.
	In particular, the trajectory of $j_x$ is a broken line in $\mathbb R^2$ with the corner point 
	\[A(x) =( \min\{0,x_1+x_3-1\} -\psi_0(x) \ , \ \psi_0(x)).
	\]
	
	


	\subsubsection{The fibration map $f$}\label{subsec_f}
	
	We then need to construct an analytic fibration $f$ which fits in the above diagram \eqref{commd}. In the following we denote a point in $Y$ by $z$, and its Pl\"ucker coordinates by $p_{ij}(z)$. 
	
	Consider the following continuous functions on $Y$ (with respect to its analytic topology):
	\begin{align*}
		F_0 &=\min\left\{\val (h_0),\min\left\{0,\val (y_1)+\val (y_3)-1\right\}-\psi_0(\val (y_1),\val (y_2),\val (y_3) ) \right\} \\
		F_1 &=\min\left\{\val (h_1),\psi_0(\val (y_1),\val (y_2),\val (y_3) ) \right\}.
	\end{align*}
	where we put
	\[
	h_0 := \frac{p_{13}}{p_{41}} \qquad , \qquad h_1:= \frac{p_{24}}{p_{23}}
	\]
	Recall that $\val(\cdot )$ denotes the valuation map on the Novikov field $\Lambda$. 
	Then, we define
	\[f:Y\rightarrow \R^5,\quad z\mapsto \left(F_0(z),F_1(z),\val (y_1(z)),\val (y_2(z)),\val (y_3(z))\right)\]
	Note that the Pl\"ucker relation implies that
	\begin{equation} 
		\label{eq_val_Plucker}
		\frac{p_{24}}{p_{23}} + \frac{p_{13}}{p_{41}} = 1+\frac{p_{12}p_{34}}{p_{41}p_{23}} .
	\end{equation}
	Therefore,
	\[
	h_0 h_1 = 1+ T^{-1}y_1y_3 . 
	\]
	In particular,
	\[
	\val(h_0) +\val(h_1) \ge \min \{0, \val(y_1)+\val(y_3) -1\}
	\]
	
	We claim that this is the explicit realization of our mirror fibration.
	First, it is straightforward to check that the diagram \eqref{commd} is commutative.
	Besides, we aim to show that the restriction of $f$ over $j(B_0)$ is an affinoid torus fibration in the sense of Section \ref{s_trop_affinoid_torus_fib}.

	\begin{proof}[Proof of Theorem \ref{main_thm}]
		
		We aim to determine the locus of $f$-smooth points; see Definition \ref{defn_smooth_point}. In fact, it suffices to detect those $f$-smooth points that lie in the embedded image $j(B)$.
		We first show that every point in $j(B_0)$ is $f$-smooth.
		
		Let $q$ be a fixed point in $j(B_0)$ which uniquely corresponds to a point $(\hat x,\hat\rho)=(\hat x_1,\hat x_2,\hat x_3,\hat \rho)$ in $B_0$ via the embedding $j$.
		Recall that $\Pi=\{x_1+x_3-1=0\}$ (see \eqref{eq_Pi})

		If $\hat x\notin \Pi$, then we pick a small neighborhood $U$ on which $x \notin \Pi$ always holds.
		Then, on the analytic domain $f^{-1}(j(U))$, we have $\val(y_1) +\val(y_3)-1\neq 0$.
		Hence, the equality $\val(h_0)+\val(h_1)=\val(1+T^{-1}y_1y_3)=\min\{0, \val(y_1)+\val(y_3)-1\}$ always holds on $f^{-1}(j(U))$.
		Then, eliminating, say, $\val(h_0)$, one obtains
		\begin{align*}
			F_0 = \min \{-\val(h_1), -\psi_0(\val(y_1),\val(y_2),\val(y_3) )\}  + \min \{0,\val(y_1)+\val(y_3)-1\}  
		\end{align*}
		and $F_1 = \min\{\val(h_1), \psi_0(\val(y_1), \val(y_2), \val(y_3))\}$.
		In other words, the fibration map $f=(F_0,F_1,\val(y_1),\val(y_2),\val(y_3)$, restricted on $j(U)$, is essentially determined only by $h_1,y_1,y_2,y_3$, while
		$h_0=h_1^{-1}(1+T^{-1}y_1y_3)$
		does not give an independent function.
		The analytic functions $h_1,y_1, y_2, y_3$ are invertible on $f^{-1}(j(U))$ and then give a commutative diagram
		\[\xymatrix{f^{-1}(j(U))\ar^{(h_1,y_1,y_2,y_3)}[rr]\ar_f[d]&  & \trop^{-1}(V) \ar^{\trop}[d]\\
			j(U) \ar[rr] & & V
		}\]
		for some open subset $V$. Here the bottom horizontal arrow is the composition of $j^{-1}$ and a natural diffeomorphism induced by $\psi$.
		Recall that this diffeomorphism is precisely the one used to define the integral affine coordinates; see \eqref{eq_chi_+} and \eqref{eq_psi_defn}.
		
		If $\hat x\in \Pi$, then $\hat\rho\neq 0$.
		Pick a sufficiently small neighborhood $U$ of $(\hat x,\hat \rho)$ so that $\rho\neq 0$ on $U$.
		Assume $\rho>0$ on $U$.
		Then, by the definition of $j$ \eqref{eq_j}, we see that
		$j(U)$ is of the form
		\[
		\left\{ (-\psi_0(x) + \min \{0,x_1+x_3-1\}  \ , \ \psi(x,\rho) \ , \ x ) \in\mathbb R^5 \mid \rho>0 \ , \ (x,\rho)=(x_1,x_2,x_3,\rho)\in U
		\right\}.
		\]
		Here the first coordinate is fixed. Recall also that $\psi(x,\rho)$ is strictly decreasing in $\rho$.
		Then, the analytic domain $f^{-1}(j(U))$ admits the invertible analytic functions $h_1,y_1,y_2,y_3$ and a similar commutative diagram holds.
		Now, we assume $\rho<0$ on $U$.
		Then, $j(U)$ takes the form
		\[
		\left\{ (-\psi(x,\rho) + \min \{0,x_1+x_3-1\}  \ , \ \psi_0(x) \ , \ x ) \in\mathbb R^5 \mid \rho<0 \ , \ (x,\rho)=(x_1,x_2,x_3,\rho)\in U
		\right\}.
		\]
		Here the second coordinate becomes fixed. One can then check that $h_0,y_1,y_2,y_3$ are invertible analytic functions on $f^{-1}(j(U))$ and a similar commutative diagram holds.
		
		Now, we define 
		\[
		\mathcal Y= f^{-1}(j(B))
		\]
		and
		\[
		\pi^\vee = j^{-1} \circ f |_{\mathcal Y} : \mathcal Y\to B . 
		\]
		
		It follows directly from the construction that the smooth locus of $\pi^\vee$ is exactly the smooth locus of the Lagrangian fibration $\pi$. Furthermore, the integral affine structure induced by $\pi^\vee$ agrees with the one induced by $\pi$.
		
		Regarding the superpotential, the expression computed in \eqref{eq_tilde_W} coincides precisely with the Marsh-Rietsch superpotential \cite{marsh2020b}, upon identifying the Novikov parameter $T$ with the formal variable $q$.
	\end{proof}

	\appendix
	\section{Appendix}
	
	In the appendix, we give a review of basic knowledge of non-archimedean analytic geometry and the Floer-theoretic foundation for the SYZ mirror construction.

	\subsection{Basics of non-archimedean analytic geometry}

	Let $(\Bbbk, |\cdot|)$ be a complete normed field that is algebraically closed. We assume it is \textit{non-archimedean} in the sense that the norm $|\cdot|: \Bbbk\to \mathbb R_{\ge 0}$ satisfies the ultrametric triangle inequality $|x+y|\leqslant \max\{|x|, |y|\}$.
	We can also define the (non-archimedean) valuation $\val: \Bbbk\to \mathbb R\cup \{\infty\}$ by $\val(x)=-\log|x|$, and then $\val(x+y)\geqslant \min\{\val(x),\val(y)\}$.
	Examples of non-archimedean fields include the field $\mathbb C((T))$ of Laurent series, the field of Puiseux series, the field of $p$-adic numbers, and so on.
	For our purposes, the non-archimedean field we focus on is the \textit{Novikov field} defined as follows:
	\[
	\Bbbk=\Lambda= \mathbb C(( T^{\mathbb R})) = \left\{
	\sum_{i=0}^\infty a_i T^{\lambda_i} \mid a_i\in\mathbb C, \lambda_i\in\mathbb R,\lambda_i \nearrow +\infty
	\right\}
	\]
	Its valuation map
	$
	\val: \Lambda\to \mathbb R \cup \{\infty\}
	$
	is defined by sending $x=\sum_{i=0}^\infty a_i T^{\lambda_i}$ with $a_0\neq 0$ to $\val(x) =\lambda_0$.
	The valuation ring is given by $\Lambda_0:= \val^{-1}[0,\infty]=\{x\mid |x| \leqslant 1 \}$, often called the \textit{Novikov ring}.
	It has the maximal ideal $\Lambda_+:=\val^{-1}(0,\infty]=\{x\mid |x|<1\}$.
	The multiplicative group of units in the field $\Lambda$ is denoted as 
	\[
	U_\Lambda:= \val^{-1}(0)=\{x\mid |x|=1\}
	\]
	The standard isomorphism $\mathbb C^*\cong \mathbb C/ 2\pi i \mathbb Z$ naturally extends to $U_\Lambda\cong \Lambda_0/ 2\pi i \mathbb Z$. In particular, for any $y\in U_\Lambda$, there exists some $x\in \Lambda_0$ with $y=\exp(x)$.
	This is a property of the Novikov field.

	\subsubsection{Review of Berkovich geometry}
	
	Let $A$ be a commutative $\Bbbk$-algebra with the structure map $\Bbbk\to A$.
	A \textit{multiplicative seminorm} on $A$ is a map
	$
	\|\cdot\|:A\to \mathbb R_{\ge 0}
	$, extending the norm $|\cdot|$ on $\Bbbk$, such that for all $f,g\in A$ and all $c\in \Bbbk$, we have: (i) $\|0\|=0$ and $\|1\|=1$; (ii) $\|fg\|=\|f\|\cdot \|g\|$; (iii) $\|f+g\|\le \max\{\|f\|,\|g\|\}$.
	We define the \emph{Berkovich spectrum}
	$\mathcal M(A)$ to be the set of multiplicative seminorms on $A$ extending the norm on $\Bbbk$.
	If $x\in\mathcal M(A)$, we often denote by $\|\cdot\|_x: A \to \mathbb R_{\ge 0}$ the corresponding seminorm.
	For each $f\in A$, define the evaluation map
	\[
	\operatorname{ev}_f:\mathcal M(A)\to \mathbb R_{\ge 0},
	\qquad
	x\mapsto \|f\|_x.
	\]
	The \emph{Berkovich topology} on $\mathcal M(A)$ is the coarsest topology for which
	$\operatorname{ev}_f$ is continuous for every $f\in A$.

	Fix $n\ge 0$ and a \textit{polyradius} $r=(r_1,\dots,r_n)\in(\mathbb R_{>0})^n$.
	Define the \emph{Tate algebra of polyradius $r$} (or \emph{polydisk algebra}) by
	\[
	\Bbbk \langle r^{-1}T\rangle
	:=
	\Bbbk \langle r_1^{-1} T_1, \dots, r_n^{-1}T_n \rangle:=
	\left\{
	\sum_{\nu\in\mathbb Z_{\ge 0}^n} a_\nu T^\nu
	\ \middle|\
	a_\nu\in \Bbbk,\ \ |a_\nu|\, r^\nu \to 0 \text{ as }|\nu|\to\infty
	\right\},
	\qquad
	r^\nu:=\prod_{i=1}^n r_i^{\nu_i},
	\]
	equipped with the \emph{Gauss norm}
	\[
	\Bigl\|\sum_\nu a_\nu T^\nu\Bigr\|_{r}
	:=\max_\nu \bigl(|a_\nu|\, r^\nu\bigr).
	\]
	For $r=(1,\dots,1)$, we note that $\Bbbk\langle T_1,\dots,T_n\rangle= \Bbbk \langle r^{-1}T\rangle$ is the usual Tate algebra.
	
	A $\Bbbk$-affinoid algebra is a Banach $\Bbbk$-algebra $A$ for which
	there exist $n$, $r\in(\mathbb R_{>0})^n$, and an ideal $I\subset \Bbbk \langle r^{-1}T\rangle$
	such that $A \cong \Bbbk\langle r^{-1}T\rangle / I $.
	A strictly $\Bbbk$-affinoid algebra is a quotient
	$
	A \cong \Bbbk \langle T_1,\dots,T_n\rangle / I
	$
	for some $n$ and ideal $I\subset \Bbbk\langle T_1,\dots,T_n\rangle$.
	Write $|\Bbbk^\times|\subset \mathbb R_{>0}$ for the value group.
	If $|\Bbbk^\times|=\mathbb R_{>0}$, as is the case for the Novikov field considered here, then every $\Bbbk$-affinoid algebra is strictly $\Bbbk$-affinoid. 
	Therefore, since we work only over the Novikov field, we will henceforth drop the adjective ``strictly'' and also suppress the prefix ``$\Bbbk$-'', and simply use the term \emph{affinoid algebra}.
	
	The Berkovich spectrum $\mathcal M(A)$ of an affinoid algebra $A$ is called an \textit{affinoid space}.

	Starting from affinoid spaces, Berkovich constructs in \cite{Berkovich1993etale}
	the category of \textit{analytic spaces} by a gluing formalism that is somewhat delicate.
	We will not reproduce the construction here; instead, we recall a few properties.
	Let $X$ be a topological space and let $(X_i)_{i\in I}$ be a family of subsets of $X$.
	We say that $(X_i)$ is a \emph{$G$-covering} of $X$ if every point $x\in X$ admits a neighborhood of the form
	$
	U=\bigcup_{i\in J} X_i
	$
	for some \emph{finite} subset $J\subset I$ such that $x\in X_i$ for all $i\in J$.
	Notice that $G$-coverings are not required to be open coverings; rather, they are required to cover neighborhoods of points after passing to finitely many members,
	each of which contains the point.

	Let $X$ be an analytic space in the sense of Berkovich. Then, $X$ is a topological space in which every point admits
	a basis of \emph{compact} neighborhoods.
	The space $X$ has a class of compact subsets called its
	\emph{affinoid domains}. 
	Besides, $X$ also has another class of subsets consisting of the so-called \emph{analytic domains}. By definition, an analytic domain is a subset $V\subset X$ which admits a $G$-covering by
	affinoid domains contained in $V$.
	The $G$-topology on $X$ is defined as follows: its objects are the analytic domains of $X$, its morphisms are inclusions, and its covering families are the $G$-coverings.
	We remark that a compact subset of $X$ which is a finite union of analytic domains of $X$ is an analytic domain.
	
	The spaces most commonly considered in the classical approaches to non-archimedean analytic geometry
	(such as Tate's rigid-analytic spaces \cite{Tate_origin})
	coincide with what Berkovich calls \emph{good} analytic spaces: those for which every point admits
	an affinoid neighborhood, and hence a basis of affinoid neighborhoods.
	Good spaces are often sufficient in practice, but the general category of analytic spaces is sometimes technically convenient.

	\subsubsection{Analytification}
	To any scheme $X$ of finite type, we can
	associate an analytic space $X^{\mathrm{an}}$, called the \emph{analytification} of $X$.
	If $X=\mathrm{Spec} A$ is affine, then the underlying set of $X^{\mathrm{an}}$ is the spectrum $\mathcal M(A)$
	of multiplicative seminorms $\|\cdot\|_x:A\to\mathbb R_{\ge0}$ extending the norm on the ground field $\Bbbk$.
	Alternatively, a point of $X^{\mathrm{an}}$ may be viewed as a pair $(\mathfrak p,|\cdot|_x)$ consisting of a
	prime $\mathfrak p\subset A$ together with an absolute value on the residue field $\kappa(\mathfrak p)$ extending that of $\Bbbk$.
	For general $X$, choose an affine open cover $X=\bigcup_i U_i$ with $U_i=\mathrm{Spec} A_i$ and set
	$U_i^{\mathrm{an}}:=\mathcal M(A_i)$.
	On overlaps $U_i\cap U_j$, these analytifications identify with common
	analytic domains, and one defines $X^{\mathrm{an}}$ by gluing the spaces $U_i^{\mathrm{an}}$ along these domains.
	The resulting space is an analytic space and is good in the above sense.
	

	\subsubsection{Tropicalization map and affinoid torus fibration}
	\label{s_trop_affinoid_torus_fib}
	
	Recall that we focus on the case of the Novikov field $\Bbbk=\Lambda$. 
	Let $\mathbf T=\mathrm{Spec} \ \Lambda[T_1^{\pm1},\dots,T_n^{\pm1}]$ be the algebraic torus, and let $\mathbf T^{\mathrm{an}}$ be its
	Berkovich analytification. There is a natural \emph{tropicalization} map
	\[
	\trop:\mathbf T^{\mathrm{an}}\longrightarrow \mathbb R^n,
	\qquad
	x\longmapsto \bigl(-\log|T_1(x)|,\dots,-\log|T_n(x)|\bigr),
	\]
	where $|T_i(x)|=\|T_i\|_{x}$ denotes the value of the coordinate function $T_i$ under the multiplicative seminorm
	corresponding to $x$. 
	For a point $x=(a_1,\dots,a_n)$ in $(\Lambda^*)^n$, one has
	$\trop(x)=(\val(a_1),\dots,\val(a_n))$.
	Roughly speaking, $\trop$ records the non-archimedean ``size'' of the coordinates.
	Abusing the notation, we often simply write $(\Lambda^*)^n$ for $\mathbf T^{\mathrm{an}}$.
	
	The tropicalization map is known to be continuous, so given a Euclidean open subset $U$ of $\mathbb R^n$, the preimage $\trop^{-1}(U)$ is an open subset of $(\Lambda^*)^n$ and thus an analytic domain of $(\Lambda^*)^n$.
	
	Let $\Delta\subset\mathbb R^n$ be a rational convex polyhedron, given by finitely many affine-linear
	inequalities
	$
	\sum_{j=1}^n b_{ij}u_j \ge c_i$, $ i=1,\dots,N$
	with $b_{ij}\in\mathbb Z$ and $c_i\in\mathbb R$.
	A result of Einsiedler-Kapranov-Lind \cite[Proposition~3.1.5]{EKL} implies that
	$\trop^{-1}(\Delta)\subset (\Lambda^*)^n$ is an affinoid domain (not just an analytic domain). Moreover, the corresponding affinoid algebra is
	\[
	\Lambda\langle \Delta\rangle
	=
	\left\{
	\sum_{i=0}^{\infty} s_i\,Y^{\alpha_i}\in \Lambda[[\mathbb Z^n]]
	\ \middle|\
	s_i\in\Lambda,\ \alpha_i\in\mathbb Z^n,\ \val(s_i)+\alpha_i\cdot\gamma \to +\infty
	\text{ for all }\gamma\in\Delta
	\right\},
	\]
	where $\Lambda[[\mathbb Z^n]]\cong \Lambda[[Y_1^{\pm 1},\dots,Y_n^{\pm 1}]]$ is the Laurent formal power
	series ring; see also \cite[6.1.4]{BGR}.
	For $f_1,\dots,f_r\in \Lambda\langle\Delta\rangle$, the common zero locus of
	$f_1,\dots,f_r$ inside $\trop^{-1}(\Delta)$ is again an affinoid domain and can be identified with
	$
	\mathcal M\bigl(\Lambda\langle\Delta\rangle/(f_1,\dots,f_r)\bigr)
	$; cf.\ \cite[Proposition~3.1.8]{EKL}.

	Let $X_0$ be an analytic space and $B_0$ a topological space.

	\begin{defn}
		\label{defn_smooth_point}
		Fix a continuous map $f:X_0\to B_0$.
		A point $b\in B_0$ is called \textit{$f$-smooth} if there is
		an open neighborhood $U\subset B_0$, an open subset
		$V\subset \mathbb R^n$, and isomorphisms
		\[
		\chi :U\xrightarrow{\ \sim\ }V,
		\qquad
		\varphi:f^{-1}(U)\xrightarrow{\ \sim\ }\trop^{-1}(V)
		\]
		such that $\chi \circ f = \trop\circ \varphi$.  
		The map $f$ is called an \textit{affinoid torus fibration}
		if every point $b\in B_0$ is $f$-smooth. cf. \cite[Section 4]{KSAffine}.
	\end{defn}

	We remark that an open subset of an analytic space is an analytic domain, and any analytic domain inherits a canonical structure of an analytic space.
	Although $\trop^{-1}(V)$ need not be affinoid when $V$ is open, it is an analytic domain
	covered by affinoid domains of the form $\trop^{-1}(\Delta)$ with $\Delta\subset V$ a rational
	convex polyhedron. Thus, one may recover affinoid local models by restricting to such relatively
	compact polyhedra inside $V$.

	\subsection{Family Floer SYZ mirror construction}
	\label{s_appendix_family_floer}
	
	Let $(X, \omega)$ be a symplectic manifold of real dimension $2n$. Suppose there is a Lagrangian torus fibration $\pi_0:X_0\to B_0$ on some open domain $X_0\subset X$.
	Let $J$ be an almost complex structure such that there is no $J$-holomorphic stable disk of negative Maslov index bounding a Lagrangian fiber.
	This is indeed the case for the Lagrangian fibration considered in this paper.
	
	\subsubsection{General statement}
	The family Floer mirror construction developed in \cite{Yuan_I_FamilyFloer} states that 
	
	\begin{thm}
		\label{family_floer_thm}
		Given $(X,\pi_0)$ as above, there is an analytic space $X_0^\vee$ over the Novikov field, an analytic function $W:X_0^\vee \to \Lambda$, and an affinoid torus fibration $\pi_0^\vee:X_0^\vee \to B_0$ such that
		\begin{itemize}
			\itemsep 0pt 
			\item they are unique up to isomorphism of analytic spaces
			\item the integral affine structure on $B_0$ induced by $\pi_0^\vee$ is identical to the one induced by $\pi_0$
			\item the set of rigid points in $X_0^\vee$ coincides with
			\[
			\bigcup_{q\in B_0} H^1(L_q; U_\Lambda)
			\]
		\end{itemize}
	\end{thm}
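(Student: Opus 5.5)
The statement is the general family Floer theorem of \cite{Yuan_I_FamilyFloer}: existence and uniqueness of $(X_0^\vee, W, \pi_0^\vee)$, matching of the integral affine structures, and identification of the rigid points. The proof will follow the same template that the explicit construction of Section~\ref{s_family_floer_mirror} carried out for two charts, now for an arbitrary atlas.

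\textbf{Local charts.} First I will fix an integral affine atlas $\{(U_i,\chi_i)\}$ of $B_0$ by action coordinates, in the way \eqref{eq_chi_+} was built, so that each $U_i$ is contractible and small. For each $i$, trivialize the local system $\mathscr S=\bigcup_q \pi_1(L_q)$ over $U_i$ by choosing a basis $\sigma_1,\dots,\sigma_n$. Then define the bijection
\[
\tilde\chi_i:\ \bigcup_{q\in U_i}H^1(L_q;U_\Lambda)\to \trop^{-1}(\chi_i(U_i)),\qquad (q,\mathbf y)\mapsto \bigl(T^{\chi_i^k(q)}\mathbf y^{\sigma_k}\bigr)_k .
\]
This exactly mimics $\tilde\chi_\pm$, and it makes the chart $\trop^{-1}(V_i)\to V_i$ an affinoid torus fibration in the sense of Definition~\ref{defn_smooth_point}.

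\textbf{Gluing maps and superpotential.} For each $q$ I will use the $A_\infty$ algebra of $L_q$ with $\Lambda_0$-coefficients. The assumption of no negative Maslov disks makes it weakly unobstructed with potential $W_q(\mathbf y)=\sum_\beta n_\beta T^{E(\beta)}\mathbf y^{\partial\beta}$. For $q,q'$ in $U_i\cap U_j$, a path joining them gives a pseudo-isotopy of $A_\infty$ algebras. This pseudo-isotopy induces an $A_\infty$ homomorphism, and hence a coordinate change on Maurer--Cartan spaces. Its leading term is the parallel transport $\mathbf y\mapsto\mathbf y$, and its corrections come from Maslov-zero disks and are of the form $\exp$ of convergent series. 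On tropical polyhedra these corrections are analytic, so they glue $\trop^{-1}(V_i)$ and $\trop^{-1}(V_j)$ over the transition map $\chi_j\circ\chi_i^{-1}$, and they preserve $W$ as in $\varphi^*W_-=W_+$. I will check cocycle conditions on triple overlaps up to the ambiguity of homotopic pseudo-isotopies. The ambiguity is killed by showing that homotopic $A_\infty$ maps induce the same map on MC moduli. With that, the charts glue to $X_0^\vee$, $W$ descends, and $\pi_0^\vee$ is well defined.

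\textbf{Affine structure and rigid points.} The transition maps preserve $\trop$ up to the affine map $\chi_j\circ\chi_i^{-1}$. This holds because the corrections have strictly positive energy relative to leading terms; more precisely, they are unit factors $1+(\text{higher valuation})$ away from walls, as in \eqref{phi_on_y_+}. Hence the induced integral affine structure is the given one. Rigid points of $\trop^{-1}(V_i)$ are exactly the $\Lambda$-valued points, which are in bijection with $\bigcup_q H^1(L_q;U_\Lambda)$ via $\tilde\chi_i$.

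\textbf{Uniqueness.} For uniqueness, different choices (of $J$, atlas, or basis) give chartwise isomorphisms by the same pseudo-isotopy argument, and these are compatible with the gluing.

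\textbf{Main obstacle.} The hard step is the cocycle condition together with the convergence and analyticity of the gluing maps near walls. This requires controlling the homotopy-uniqueness of the pseudo-isotopy maps on Maurer--Cartan sets, and showing that the correction series converges on each $\trop^{-1}(\Delta)$ by Gromov compactness energy bounds. I would first try to reduce to proving that these maps depend only on homotopy classes of paths in $B_0$ relative to endpoints.
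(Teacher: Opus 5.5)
The paper does not prove this theorem itself. It is imported from \cite{Yuan_I_FamilyFloer}, and the appendix only sketches the mechanism: pointed charts $\tau$ over sets with $U\cup\{q_0\}$ of small diameter, transitions $y_j\mapsto y_jF_j$ coming from Maslov-0 disks, and $W_{\tau'}\circ\Phi=W_\tau$. Your outline has the same architecture, and handling cocycles and uniqueness via homotopy-invariance of the induced maps on Maurer--Cartan sets is the right idea.

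However, the step that is supposed to produce the gluing maps does not work as written. With $\Lambda_0$-coefficients, the Maurer--Cartan set of $L_q$ is only the valuation-zero torus $H^1(L_q;U_\Lambda)$. A pseudo-isotopy along a path from $q$ to $q'$ sends this torus to $H^1(L_{q'};U_\Lambda)$, so it moves between \emph{different} fibers. A gluing over $\chi_j\circ\chi_i^{-1}$ must instead send the fiber over $q$ in one chart to the fiber over the same $q$ in the other. For $q=q'$ your recipe only returns the naive identification $\tilde\chi_j\circ\tilde\chi_i^{-1}$, with no wall-crossing correction.

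The missing idea is to attach each chart to a reference fiber $L_{q_0}$. All nearby fibers $L_q$ are then encoded as points $\mathbf y$ of valuation $\chi(q)-\chi(q_0)$ for the $A_\infty$ algebra of $L_{q_0}$; this is Fukaya's trick, and it is what the pointed charts onto $\trop^{-1}(V-c)$ in the appendix express. The transition $\Phi$ then comes from a pseudo-isotopy between the two \emph{reference} fibers $L_{q_0}$ and $L_{q_0'}$, evaluated at such $\mathbf y$. Making this meaningful requires two things: the divisor axiom, so that the maps on Maurer--Cartan sets are genuinely series in $\mathbf y=e^b$ rather than in $b$, and convergence away from the unit torus.

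That convergence is exactly where your proposed tool fails. Gromov compactness gives finitely many classes below each energy level, which suffices when $\trop(\mathbf y)=0$. On $\trop^{-1}(\Delta)$ with $v\in\Delta$, $v\neq 0$, the term $T^{E(\beta)}\mathbf y^{\partial\beta}$ has valuation $E(\beta)+\langle\partial\beta,v\rangle$. This tends to $+\infty$ only if $|\partial\beta|$ is bounded by a fixed multiple of $E(\beta)$ for all contributing disks. That bound is the reverse isoperimetric inequality, and it is why the appendix requires charts of diameter small compared with the reverse isoperimetric constant $\epsilon$. The same estimate gives $\val(F_j-1)>0$ on small pointed charts. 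That is what makes $\Phi$ cover the identity, and hence what makes the two integral affine structures agree.

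Your justification of that step ("unit factors $1+(\text{higher valuation})$ away from walls, as in \eqref{phi_on_y_+}") points at the wrong mechanism. Away from walls there is no correction at all. Moreover, \eqref{phi_on_y_+} is not of that form on $\mathcal N_{13}$, where $\val(T^{-1}y_1y_3)=x_1+x_3-1<0$. There the affine structure survives only because this valuation shift reproduces the term $\min\{0,x_1+x_3-1\}$ in \eqref{eq_A_intaff}, which was computed independently from A-side disk areas. For large charts like \eqref{eq_chi_+}, such a match has to be proved, not read off.
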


	Let's describe the Floer-theoretic mechanism underlying the above theorem.
	For clarity, we choose and fix a Riemannian metric on $B_0$.
	Let $U\subset B_0$ be a contractible open subset, and let $q_0\in B_0$ be a specified point such that the diameter of $U\cup \{q_0\}$ is sufficiently small.
	We allow $q_0 \notin U$.
	Remark that the point $q_0$ is introduced primarily for theoretical purposes, and in practice we may often omit mentioning it.
	
	Let $\chi=(\chi_1,\dots, \chi_n):(U, q_0) \to (V,c) \subset \mathbb R^n$ be a (pointed) integral affine coordinate chart with $\chi(q_0)=c$. Then, in the context of the theorem, there is an isomorphism of analytic spaces
	\[
	\tau: (\pi_0^\vee)^{-1}(U) \xrightarrow{\cong} \trop^{-1}(V - c)
	\]
	with $\trop \circ \tau = \chi \circ \pi_0^\vee$. Here $V-c$ consists of the points of the form $v-c$ where $v\in V$.
	Concretely, let $\mathbf y$ be a rigid point in $H^1(L_q; U_\Lambda)$ with $q\in U$. Denote the natural pairing $\pi_1(L_q)\times H^1(L_q; U_\Lambda) \to U_\Lambda$ by $(\alpha, \mathbf y)\mapsto \mathbf y^{\alpha}$.
	Suppose $\{e_i=e_i(q) \mid i=1,\dots, n \}_{q\in U}$ is the $\mathbb Z$-basis of $\pi_1(L_q)$ corresponding to the integral affine chart $\chi$.
	Then,
	\[
	\tau(\mathbf y) = (T^{\chi_1(q)} \mathbf y^{e_1(q)} , \dots, T^{\chi_n(q)} \mathbf y^{e_n(q)} )
	\]
	
	Regarding the superpotential function $W: X_0^\vee \to \Lambda$, we have
	$
	W_\tau:= W\circ \tau^{-1} : \trop^{-1}(V-c) \to \Lambda
	$
	described by
	\[
	(y_1,\dots, y_n) \mapsto  \sum_{\beta\in \pi_2(X, L_{q_0}) \ , \  \mu(\beta)=2} T^{E(\beta)} y_1^{\partial\beta \cap e_1} \cdots y_n^{\partial\beta\cap e_n} \mathsf n_{\beta}
	\]
	where $\mu(\beta)$ is the Maslov index of $\beta$, $E(\beta)=\int_\omega\beta$, and $\mathsf n_{\beta}$ is the virtual count of pseudo-holomorphic stable disk in class $\beta$.
	
	Let's next describe the gluing among these charts. Suppose $\chi':(U', q_0')\to (V',c')$, $\tau'$, and $\{e_i'(q)\}$ are given as above. Replacing $U$ and $U'$ by $U\cap U'$ if necessary, we may assume $U=U'$.
	The transition map $\Phi=\tau'\circ \tau^{-1} : \trop^{-1}(V-c)\to \trop^{-1}(V' -c')$ is an analytic isomorphism that covers the integral affine transition map $\upchi:=\chi'\circ \chi^{-1}:V-c\to V'-c'$.
	If $\upchi$ is of the form $\mathbf x\mapsto A\mathbf x + \mathbf b$ with $A=(a_{ij})\in GL(n,\mathbb Z)$ and $\mathbf b =(b_j) \in\mathbb R^n$, then it is covered by the analytic isomorphism 
	\[
	\trop^{-1}(V-c)\to \trop^{-1}(V'-c') , \qquad  y_j \mapsto T^{b_j}\cdot \prod_k y_k^{a_{jk}}
	\]
	Therefore, without loss of generality, we may assume $\upchi$ is the identity map. The transition map $\Phi$ is of the form
	\[
	\Phi: \trop^{-1}(V-c)\to \trop^{-1}(V'-c') , \qquad 
	y_j\mapsto y_j F_j(y_1,\dots, y_n)
	\]
	where $F_j$ is a convergent formal power series determined by the $A_\infty$ algebra structures associated to the two Lagrangian fibers $L_{q_0}$ and $L_{q_0'}$.
	Moreover, $F_j$ is only contributed by the presence of Maslov-0 pseudo-holomorphic disks bounded by some adjacent Lagrangian fiber.
	We also refer to \cite[Section 4]{Yuan_local_SYZ} for a coordinate-free description of Theorem \ref{family_floer_thm}.
	While $F_j$'s can be quite complicated in general, it is eventually proved in \cite{Yuan_I_FamilyFloer} that these local superpotential functions $W_\tau$ are compatible with the transition maps, that is, for the transition map $\Phi=\tau'\circ \tau^{-1}$ between the two charts $\tau$ and $\tau'$, we have 
	\[
	W_{\tau'} \circ \Phi =W_\tau
	\]

	\subsubsection{Void wall-crossing}
	Let $B_1\subset B_0$ be a contractible open set, and let
	$B_2$ be a small contractible neighborhood of $B_1$ in $B_0$. One can find $\epsilon>0$ sufficiently small so that the reverse isoperimetric constant for any Lagrangian fiber over $B_1$ is uniformly larger than $\epsilon$; see \cite{Yuan_I_FamilyFloer}.
	Fix an integral affine coordinate chart $\chi:B_2\to \R^n$. Assume that for every $q\in B_1$, the Lagrangian fiber $L_q$ bounds no non-constant Maslov index $0$ holomorphic disk. Then $(\pi_0^\vee)^{-1}(B_2)$ admits a single analytic chart, namely an identification
	\[
	(\pi_0^\vee)^{-1}(B_2)\ \cong\ \trop^{-1}\bigl(\chi(B_2)\bigr).
	\]
	Indeed, since $B_2$ is contractible, we fix a pointed affine chart
	$
	\chi:(B_2,q_0)\to (V,c)
	$.
	Cover $B_2$ by open sets $U_i$ of diameter $<\epsilon$ with basepoints $q_i\in B_1$ and pointed charts
	$
	\chi_i=\chi|_{U_i}:(U_i,q_i)\to (V_i,c_i) $ for $i\in I$.
	For each $U_i$, we have
	$
	(\pi_0^\vee)^{-1}(U_i)\ \cong\ \trop^{-1}(V_i-c_i)
	$
	as above. The assumption that there are no Maslov-$0$ holomorphic disks along the fibers over $B_1$ implies that the transition maps have no twisting terms and reduce to simple rescalings
	$
	y \mapsto T^{c}y
	$.
	If we glue various $\trop^{-1}(V_i-c_i)$ through these simple rescalings, then the outcome can be identified with $\trop^{-1}(\chi(B_2))$ as desired.
	See \cite[Proposition 4.4]{Yuan_local_SYZ}.
	
	\bibliographystyle{abbrv}
	\bibliography{mybib_g24}		

\begin{thebibliography}{10}

\bibitem{AuTDual}
D.~Auroux.
\newblock {Mirror symmetry and T-duality in the complement of an anticanonical
  divisor}.
\newblock {\em Journal of G{\"o}kova Geometry Topology}, 1:51--91, 2007.

\bibitem{Berkovich1993etale}
V.~G. Berkovich.
\newblock {\'E}tale cohomology for non-archimedean analytic spaces.
\newblock {\em Publications Math{\'e}matiques de l'IH{\'E}S}, 78:5--161, 1993.

\bibitem{Berkovich_2012spectral}
V.~G. Berkovich.
\newblock {\em Spectral theory and analytic geometry over non-Archimedean
  fields}.
\newblock Number~33. American Mathematical Soc., 2012.

\bibitem{BGR}
S.~Bosch, U.~G{\"u}ntzer, and R.~Remmert.
\newblock {\em Non-Archimedean analysis, volume 261 of Grundlehren der
  Mathematischen Wissenschaften}.
\newblock Springer-Verlag, Berlin, 1984.

\bibitem{action_angle}
J.~J. Duistermaat.
\newblock On global action-angle coordinates.
\newblock {\em Communications on pure and applied mathematics}, 33(6):687--706,
  1980.

\bibitem{EguchiHoriXiong1997}
T.~Eguchi, K.~Hori, and C.-S. Xiong.
\newblock Quantum cohomology and virasoro algebra.
\newblock {\em Physics Letters B}, 402(1--2):71--80, 1997.

\bibitem{EKL}
M.~Einsiedler, M.~Kapranov, and D.~Lind.
\newblock Non-archimedean amoebas and tropical varieties.
\newblock {\em Journal f{\"u}r die reine und angewandte Mathematik (Crelles
  Journal)}, 2006(601):139--157, 2006.

\bibitem{hong2023immersed}
H.~Hong, Y.~Kim, and S.-C. Lau.
\newblock Immersed two-spheres and {SYZ} with application to {Grassmannians}.
\newblock {\em Journal of Differential Geometry}, 125(3):427--507, 2023.

\bibitem{hori2002mirror}
K.~Hori.
\newblock Mirror symmetry and quantum geometry.
\newblock {\em Proceedings of the International Congress of Mathematicians},
  Vol. III (Beijing, 2002):431--443, 2002.

\bibitem{KSAffine}
M.~Kontsevich and Y.~Soibelman.
\newblock Affine structures and non-archimedean analytic spaces.
\newblock In {\em The unity of mathematics}, pages 321--385. Springer, 2006.

\bibitem{marsh2020b}
B.~Marsh and K.~Rietsch.
\newblock {The B-model connection and mirror symmetry for Grassmannians}.
\newblock {\em Advances in Mathematics}, 366:107027, 2020.

\bibitem{nishinou2010toric}
T.~Nishinou, Y.~Nohara, and K.~Ueda.
\newblock {Toric degenerations of Gelfand--Cetlin systems and potential
  functions}.
\newblock {\em Advances in Mathematics}, 224(2):648--706, 2010.

\bibitem{rietsch2008mirror}
K.~Rietsch.
\newblock {A mirror symmetric construction of $qH_T^*(G/P)_{(q)}$ }.
\newblock {\em Advances in Mathematics}, 217(6):2401--2442, 2008.

\bibitem{SYZ}
A.~Strominger, S.-T. Yau, and E.~Zaslow.
\newblock {Mirror symmetry is T-duality}.
\newblock {\em {Nuclear Physics. B}}, 479(1-2):243--259, 1996.

\bibitem{Tate_origin}
J.~Tate.
\newblock Rigid analytic spaces.
\newblock {\em Inventiones mathematicae}, 12(4):257--289, 1971.

\bibitem{Yuan_conifold}
H.~Yuan.
\newblock {Family Floer SYZ singularities for the conifold transition}.
\newblock {\em Accepted by Kyoto Journal of Mathematics}.

\bibitem{Yuan_I_FamilyFloer}
H.~Yuan.
\newblock {Family Floer program and non-archimedean SYZ mirror construction}.
\newblock {\em arXiv preprint arXiv: 2003.06106}, 2020.

\bibitem{Yuan_A_n}
H.~Yuan.
\newblock {Family Floer SYZ conjecture for $A_n$ singularity}.
\newblock {\em arXiv preprint arXiv:2305.13554}, 2023.

\bibitem{Yuan_local_SYZ}
H.~Yuan.
\newblock {Family Floer mirror space for local SYZ singularities}.
\newblock {\em Forum of Mathematics, Sigma}, 12:e119, 2024.

\bibitem{Yuan_c_1}
H.~Yuan.
\newblock {Family Floer superpotential’s critical values are eigenvalues of
  quantum product by $c_1$}.
\newblock {\em Selecta Mathematica}, 31(1):13, 2025.

\end{thebibliography}
	\addtocontents{toc}{\protect\setcounter{tocdepth}{2}}
	
\end{document}